\renewcommand{\d}{\delta}
\newcommand{\N}{\mathbb{N}}
\newcommand{\R}{\mathbb{R}}
\newcommand{\Z}{\mathbb{Z}}
\newcommand{\C}{\mathbb{C}}
\newcommand{\ve}{\varepsilon}
\newcommand{\lp}{\left(}
\newcommand{\rp}{\right)}
\newcommand{\wt}{\widetilde}
\newcommand{\wh}{\widehat}
\newcommand{\p}{\partial}
\newcommand{\Op}{\textnormal{Op}}
\newcommand{\nm}[1]{\left| \left| #1 \right| \right|}
\newcommand{\untab}{\hspace{-.2in}}
\newcommand{\<}{\langle}
\renewcommand{\>}{\rangle}
\newcommand{\supp}{\textnormal{supp\,}}
\newcommand{\ltwo}[1]{\lpn{#1}{2}}
\newcommand{\ltwoo}[2]{\nm{#1}_{L^2(#2)}}
\newcommand{\lpn}[2]{ \nm{#1}_{L^{#2}}}
\newcommand{\vphi}{\varphi}
\newcommand{\Rn}{\R^n}
\newcommand{\spacecutoff}{\chi}
\newcommand{\freqcutoff}{\psi}
\newcommand{\II}{I\!\!I}
\newcommand{\ra}{\rightarrow}
\newcommand{\charset}{\textnormal{Char}}
\DeclareMathOperator{\Id}{Id}
\DeclareMathOperator{\Spec}{Spec}
\renewcommand{\d}{\delta}
\newcommand{\Rb}{\mathbb{R}}
\newcommand{\Real}{\textnormal{Re\,}}
\newcommand{\Dc}{\mathcal{D}}
\newcommand{\Ec}{\mathcal{E}}
\newcommand{\Cb}{\mathbb{C}}
\newcommand{\intrn}{\int_{\Rn}}
\def\thmhead@plain#1#2#3{%
  \thmname{#1}\thmnumber{\@ifnotempty{#1}{ }\@upn{#2}}%
  \thmnote{ {\the\thm@notefont#3}}}
\let\thmhead\thmhead@plain
\newtheorem{theorem}{Theorem}
\newtheorem{lemma}{Lemma}
\newtheorem{prop}[lemma]{Proposition}
\newtheorem{rmk}[lemma]{Remark}
\newtheorem{example}[lemma]{Example}
\newtheorem{definition}[lemma]{Definition}
\newtheorem{assumption}{Assumption}
\newcommand{\thmref}[1]{Theorem~\ref{#1}}
\newcommand{\thmsref}[2]{Theorems~\ref{#1} and \ref{#2}}
\newcommand{\secref}[1]{Section~\ref{#1}}
\newcommand{\secsref}[2]{Sections~\ref{#1} and \ref{#2}}
\newcommand{\lemref}[1]{Lemma~\ref{#1}}
\newcommand{\propref}[1]{Proposition~\ref{#1}}
\newcommand{\assumref}[1]{Assumption~\ref{#1}}
\newcommand{\assumsref}[2]{Assumptions~\ref{#1} and \ref{#2}}
\definecolor{bpurple}{RGB}{170,0,200}
\numberwithin{equation}{section}
\numberwithin{lemma}{section}
\definecolor{bpurple}{RGB}{170,0,200}
\begin{document}

\title[Sharp decay for anisotropically damped waves]{Sharp exponential decay rates for anisotropically damped waves}

\author[B. Keeler]{Blake Keeler}
\email{\href{mailto:bkeeler@live.unc.edu}{bkeeler@live.unc.edu}}
\address{Department of Mathematics and Statistics \\ McGill University \\ 805 Rue Sherbrooke Ouest \\ Montréal, QC H3A 0B9}

\author[P. Kleinhenz]{Perry Kleinhenz}
\email{\href{mailto:pkleinhe@gmail.com}{pkleinhe@gmail.com}}
\address{Department of Mathematics, Michigan State University \\ 619 Cedar River Rd \\ East Lansing, MI 48823}

\date{\today}
\maketitle

\begin{abstract}
In this article, we study energy decay of the damped wave equation on compact Riemannian manifolds where the damping coefficient is anisotropic and modeled by a pseudodifferential operator of order zero. We prove that the energy of solutions decays at an exponential rate if and only if the damping coefficient satisfies an anisotropic analogue of the classical geometric control condition, along with a unique continuation hypothesis. Furthermore, we compute an explicit formula for the optimal decay rate in terms of the spectral abscissa and the long-time averages of the principal symbol of the damping over geodesics, in analogy to the work of Lebeau for the isotropic case. We also construct genuinely anisotropic dampings which satisfy our hypotheses on the flat torus. 
\end{abstract}

\section{introduction}\label{Introduction}
Let $(M,g)$ be a smooth, compact manifold without boundary and let $\Delta_g$ be the associated Laplace-Beltrami operator (taken with the convention that $\Delta_g \le 0$). Suppose ${W: L^2(M) \ra L^2(M)}$ is bounded and nonnegative. We consider the generalized damped wave equation given by
\begin{equation}\label{damped_wave}
\begin{cases}
\partial_t^2u - \Delta_g u + 2W\partial_t u = 0 \\
(u, \partial_t u)|_{t=0} = (u_0, u_1),
\end{cases}
\end{equation}
for $(u_0,u_1)^T\in \mathscr H:= H^1(M) \oplus L^2(M)$, where $\mathscr H$ is taken with the natural norm
\[\|(u_0,u_1)^T\|_{\mathscr H}^2 = \|(1-\Delta_g)^{\frac{1}{2}}u_0\|_{L^2(M)}^2 + \|u_1\|_{L^2(M)}^2.\]
\noindent We study the asymptotic properties of the energy of solutions to $\eqref{damped_wave}$ as $t \ra \infty$. Here, the energy is defined by 
\begin{equation}\label{energy}
E(u,t) =  \frac{1}{2}\int_M |\nabla_g u(t,x)|^2 + |\partial_t u(t,x)|^2 \,dv_g(x),
\end{equation}
where $dv_g$ is the Riemannian volume form on $M.$ It is straightforward to compute that
\begin{equation}\label{dissipation}
\frac{d}{dt}E(u,t) = - 2\Real\langle W\partial_t u,\partial_t u\rangle \leq 0,
\end{equation}
where $\langle\cdot,\cdot\rangle$ denotes the inner product on $L^2(M,g).$ Thus, the assumption that $W$ is a nonnegative operator guarantees that the energy of solutions to \eqref{damped_wave} experiences dissipation, but \eqref{dissipation} does not indicate how quickly the energy decays as $t\to\infty$. The most straightforward type of decay is uniform stabilization, i.e. when there exists a constant $C>0$ and a real-valued function $t\mapsto r(t)$ with $r(t)\to 0$ as $t \ra \infty$ such that 
$$
E(u,t) \leq C r(t) E(u,0).
$$
In the case where $W$ acts via multiplication by a bounded, nonnegative function $b$, a great deal is known about energy decay rates. Perhaps the most well known result states that solutions to \eqref{damped_wave} experience uniform stabilization with an exponential rate if and only if $W$ satisfies the geometric control condition (GCC) \cite{RauchTaylor1975,Ralston1969}. The GCC is satisfied if there exists some $T>0$ such that every geodesic with length at least $T$ intersects the set where $b$ is bounded below by some positive constant. Many other works have proved weaker decay rates in the setting where the GCC is not satisfied (c.f. \cite{Lebeau1996} \cite{Burq1998}  \cite{BurqZuily2016}
\cite{Christianson2007} \cite{Christianson2010} \cite{BurqChristianson2015} 
\cite{LiuRao2005} \cite{BurqHitrik2007}). With more restrictive assumptions on $W$ and $M$, one can obtain sharp decay rates (c.f.\cite{AnantharamanLeautaud2014} \cite{Stahn2017} \cite{LeautaudLerner2017} \cite{Kleinhenz2019} \cite{DatchevKleinhenz2020} \cite{Kleinhenz2019a}, \cite{Sun2022}
\cite{DyatlovJinNonnenmacher} \cite{Jin2020}). 

A distinct shortcoming of the multiplicative case is that the damping force is sensitive only to positional information and not to the direction in which the solution propagates. For this reason, one can classify multiplicative damping as an isotropic force, but many physical systems which experience \emph{anisotropic} damping forces are studied in materials science, physics, and engineering \cite{Krattiger2016, Craig2008, Joubert2011}. However, a general analysis of the damped wave equation in the anisotropic case has not yet been done. This article aims to address this gap in the literature by studying the case where the anisotropic damping force is modeled by a pseudodifferential operator.

It is common in analysis of the generalized damped wave equation \eqref{damped_wave} to assume that $W$ takes the form of a square, i.e. $W = B^*B$ for some bounded operator $B$ (c.f. \cite{AnantharamanLeautaud2014}). This guarantees that $W$ is nonnegative and enables the use of certain techniques from spectral theory. We allow for a slightly more general assumption here, namely that $W$ takes the form $W = \sum\limits_{j=1}^N B_j^*B_j$ for some finite collection $\{B_j\}_{j=1}^N\subset\Psi_{c\ell}^0(M)$, where $\Psi_{cl}^0(M)$ denotes the space of classical pseudodifferential operators 
on $M$ of order zero with polyhomogeneous symbols. The corresponding space of symbols is denoted $S_{c\ell}^0(T^*M)$. We note that allowing $W$ to take the form of a sum of squares is indeed a generalization, since it is not generically possible to write $\sum_{j=1}^N B_j^*B_j$ as $B^*B$ for some $B\in \Psi_{c\ell}^0(M)$, since the pseudodifferential calculus only allows for the computation of square roots modulo a smoothing remainder. We denote by $w\in S_{c\ell}^0(T^*M)$ the principal symbol of $W$, taken to be positively fiber-homogeneous of degree 0 outside a small neighborhood of the zero section in $T^*M.$ That is, $w(x,s\xi) = w(x,\xi)$ for all $s > 0$ and all $|\xi| \ge c$ for some $c > 0$ which can be chosen to be arbitrarily small. This homogeneity allows us to treat $w$ as a function on the co-sphere bundle 
\[S^*M := \{(x,\xi)\in T^*M:\, |\xi|_g = \frac{1}{2}\},\]
where the choice of $\frac{1}{2}$ is made for the sake of convenience in later arguments. 

We now state the required assumptions for the main theorem. The first is an anisotropic analogue of the classical geometric control condition.

\begin{assumption}[(\textbf{Anisotropic Geometric Control Condition})]\label{geometric_control}
Let $\varphi_t$ denote the lift of the geodesic flow to $T^*M$. Assume that there exists a compact neighborhood $K$ of the zero section in $T^*M$ and constants $T_0,c>0$ such that for every $(x_0,\xi_0)\in T^*M\setminus K$,
\[\frac{1}{T}\int\limits_0^T w(\varphi_t(x_0,\xi_0))\,dt \ge c,\quad \text{for }T \ge T_0.\]
That is, the long-time averages of $w$ over geodesics are uniformly bounded below. In this case, we say $W$ satisfies the \emph{anisotropic} geometric control condition (AGCC).
\end{assumption}
\noindent Note that in the case of multiplicative damping, \assumref{geometric_control} is equivalent to classical geometric control condition in \cite{RauchTaylor1975}. 

The second key assumption requires that the kernel of $W$ contain no nontrivial eigenfunctions of $\Delta_g$. 
\begin{assumption}\label{unique_cont}
 If $v\in L^2(M)$ satisfies $-\Delta_g v = \lambda^2 v$ with $\lambda \neq 0$, then $W v \ne 0.$ 
\end{assumption} 
\noindent In the case where $W = b(x)$, \assumref{unique_cont} is satisfied when $b$ is supported on any open set, since eigenfunctions of $\Delta_g$ cannot vanish on open sets by the unique continuation principle (c.f. \cite{RauchTaylor1975}). It is for this reason that we sometimes refer to \assumref{unique_cont} as a ``unique continuation hypothesis." 

With these assumptions stated, we then have the following equivalence.
\begin{theorem}\label{exp_decay_thm}
All solutions $u$ to \eqref{damped_wave} with $W\in\Psi_{c\ell}^0(M)$ satisfy 
\begin{equation}\label{exp_decay_eqn}
E(u,t)\le C e^{-\beta t}E(u,0)
\end{equation}
for some $C,\,\beta> 0$ and for all $t\ge 0$ if and only if $W$ satisfies \assumsref{geometric_control}{unique_cont}.
\end{theorem}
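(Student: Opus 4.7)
\medskip

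\noindent\textbf{Proof proposal.} I would establish each implication separately: the necessity direction by direct quasimode and eigenfunction constructions, and the sufficiency direction through semigroup theory combined with a semiclassical defect measure argument.

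\emph{Necessity.} \assumref{unique_cont} is clearly necessary: if $-\Delta_g v = \lambda^2 v$ with $v\ne 0$ and $Wv = 0$, then $u(t,x) = e^{i\lambda t}v(x)$ is a solution of \eqref{damped_wave} with constant positive energy, contradicting \eqref{exp_decay_eqn}. For the necessity of AGCC, I would construct Gaussian-beam quasimodes along geodesics witnessing its failure: given $(x_n,\xi_n)\in T^*M$ and $T_n\to\infty$ with $\frac{1}{T_n}\int_0^{T_n}w(\varphi_t(x_n,\xi_n))\,dt\to 0$, high-frequency beams propagated along the corresponding geodesics produce approximate solutions of \eqref{damped_wave} whose energy dissipates more slowly than any prescribed exponential rate.

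\emph{Sufficiency: reduction to a resolvent estimate.} Recast \eqref{damped_wave} as the first-order system $\partial_t U = \mathcal A U$ on $\mathscr H$ with $U = (u,\partial_t u)^T$ and
\[\mathcal A = \begin{pmatrix} 0 & \mathrm{Id} \\ \Delta_g & -2W \end{pmatrix}.\]
The dissipation identity \eqref{dissipation} together with Lumer--Phillips shows that $\mathcal A$ generates a bounded $C_0$-semigroup on $\mathscr H$. By the Gearhart--Pr\"uss--Huang theorem, \eqref{exp_decay_eqn} is equivalent to $i\R\subset \rho(\mathcal A)$ together with $\sup_{\lambda\in\R}\|(\mathcal A - i\lambda)^{-1}\|_{\mathscr H\to\mathscr H} < \infty$. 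The first condition follows from \assumref{unique_cont}: an eigenvector $(u_0,u_1)^T$ of $\mathcal A$ at eigenvalue $i\lambda\ne 0$ satisfies $u_1 = i\lambda u_0$ and $(-\Delta_g - \lambda^2 + 2i\lambda W)u_0 = 0$; pairing with $u_0$ and using the sum-of-squares form $W = \sum_j B_j^* B_j$ forces $B_j u_0 = 0$ for every $j$, so $Wu_0 = 0$ and the equation reduces to $-\Delta_g u_0 = \lambda^2 u_0$, contradicting \assumref{unique_cont}. A short manipulation reduces the uniform resolvent bound to the high-frequency Helmholtz-type estimate
\[\|u\|_{L^2(M)}\le \frac{C}{|\lambda|}\,\|(-\Delta_g - \lambda^2 - 2i\lambda W)u\|_{L^2(M)},\qquad |\lambda|\ge \lambda_0,\]
with the bounded-$|\lambda|$ regime handled by continuity of $\lambda\mapsto(\mathcal A-i\lambda)^{-1}$ on the compact set $\{|\lambda|\le\lambda_0\}\subset\rho(\mathcal A)$.

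\emph{High-frequency estimate and the main obstacle.} I would prove the Helmholtz estimate by contradiction. Suppose it fails along a sequence $\lambda_n\to+\infty$ and $u_n\in H^2(M)$ with $\|u_n\|_{L^2} = 1$ and $f_n := (-\Delta_g - \lambda_n^2 - 2i\lambda_n W)u_n$ satisfying $\|f_n\|_{L^2} = o(\lambda_n)$. Setting $h_n = 1/\lambda_n$ and $P_h := -h^2\Delta_g - 1 - 2ihW$, this becomes $\|P_{h_n}u_n\|_{L^2} = o(h_n)$. I would extract a semiclassical defect measure $\mu$ on $T^*M$ associated with $\{u_n\}$; standard commutator arguments should show that $\mu$ is supported on the characteristic variety $\{|\xi|_g = 1\}$ and is invariant under the geodesic flow $\varphi_t$, while the dissipation identity $\Imag\langle P_h u,u\rangle = -2h\langle Wu,u\rangle$ combined with $\|P_{h_n}u_n\|_{L^2} = o(h_n)$ yields $\int w\,d\mu = 0$. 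Using $\varphi_t$-invariance to replace $w$ by its geodesic time-average and applying AGCC (whose hypothesis holds on $\supp\mu$, since the latter lies on the cosphere bundle and hence is disjoint from a small neighborhood of the zero section) yields $0 = \int w\,d\mu \ge c\|\mu\|$, forcing $\mu = 0$. A final decomposition of $u_n$ into frequency bands $|\xi|\ll\lambda_n$, $|\xi|\sim\lambda_n$, $|\xi|\gg\lambda_n$, with the middle band controlled by $\mu = 0$ and the outer bands by the ellipticity of $-\Delta_g - \lambda_n^2$ there, then forces $\|u_n\|_{L^2}\to 0$, the desired contradiction. \emph{The main obstacle} is the careful construction and analysis of the defect measure in the anisotropic setting: because $-2ihW$ contributes to the subprincipal symbol of $P_h$ and $W$ is a genuinely pseudodifferential operator, both justifying $\varphi_t$-invariance of $\mu$ and deriving $\int w\,d\mu = 0$ directly from the dissipation identity require care with quantization and symbol-calculus errors.
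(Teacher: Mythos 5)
Your proposal takes a genuinely different route from the paper's. The paper proves \thmref{best_const_thm} first (the formula $\alpha = 2\min\{-D_0, L_\infty\}$) using space-time defect measures on $S^*(\R\times M)$ and Gaussian-beam quasimodes, then deduces \thmref{exp_decay_thm} in \secref{thm_1_proof} by spectral theory: AGCC gives $L_\infty > 0$, \assumref{unique_cont} rules out nonzero imaginary eigenvalues so $D_0 < 0$, and both directions follow from reading the sharp formula. You propose the Gearhart--Pr\"uss--Huang reduction to a uniform resolvent bound on $i\R$, proved by a contradiction/defect-measure argument on $T^*M$. Your route is closer to the modern literature and would be more economical if only \thmref{exp_decay_thm} were wanted; the cost is that it does not recover the sharp rate in \thmref{best_const_thm}, which is the paper's main contribution. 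Your treatment of \assumref{unique_cont} (both as necessary and as excluding imaginary eigenvalues) agrees with the paper.

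There is a genuine gap in the sufficiency argument: flow-invariance of the defect measure $\mu$ is \emph{not} a standard commutator fact here, because the damping contributes at the same order. With $P_h = -h^2\Delta_g - 1 - 2ihW$ and $\|P_h u_n\| = o(h)$, the commutator identity produces the damped propagation equation
\[
\int_{T^*M}\{|\xi|_g^2, a\}\,d\mu \;=\; -4\int_{T^*M} a\,w\,d\mu,
\]
so $\mu$ is in general not $\varphi_t$-invariant. You must \emph{first} use $\Imag\langle P_h u_n, u_n\rangle = -2h\langle Wu_n, u_n\rangle = o(h)$ to conclude $\int w\,d\mu = 0$; since $w\ge 0$ and $\mu\ge 0$ this forces $w\mu = 0$, which then kills the right side above and gives invariance. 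Your write-up presents invariance and $\int w\,d\mu = 0$ as independent facts, which is backwards. Once reordered, the rest of your chain (time-average via invariance, apply AGCC on $\{|\xi|_g=1\}$, conclude $\mu = 0$, $h$-oscillation forces $\|u_n\|_{L^2}\to 0$) is sound, provided you also verify that the classical order-$0$ operator $W$ pairs with the semiclassical measure via its homogeneous principal symbol $w$ on the unit cosphere bundle.

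The second issue is an underestimation of the necessity direction. Asserting that ``high-frequency beams propagated along the corresponding geodesics produce approximate solutions of \eqref{damped_wave}'' glosses over the central anisotropic difficulty: $W$ is pseudodifferential, so its action on a Gaussian beam microlocalized at $(x_t,\xi_t)$ is not simply multiplication by $w(x_t,\xi_t)$, and quantifying the error $(W - w(x_t,\xi_t))h_k$ is delicate. The paper devotes all of \secref{pseudocoherentsection} to this coherent-state estimate (\propref{gaussian_beam_prop}), then needs it to build exact slowly-decaying solutions via \propref{dw_quasimode} and \propref{exact_solution_prop}. You flag the defect-measure subtleties as ``the main obstacle,'' but this coherent-state estimate is at least as technical and is where the anisotropy genuinely bites; any serious attempt at your necessity argument would have to reproduce something equivalent to it.
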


\noindent In other words, solutions experience uniform stabilization at an exponential rate if and only if $W$ satisfies  \assumsref{geometric_control}{unique_cont}.

The existing literature on anisotropic damping coefficients is quite limited. In the context of pseudodifferential $W$, Sj\"ostrand \cite{Sjostrand2000} studied the asymptotic distribution of eigenvalues of the stationary damped wave equation. Christianson, Schenck, Vasy, and Wunsch \cite{csvw} showed that a polynomial resolvent estimate for a related complex absorbing potential problem gives another polynomial resolvent estimate of the same order for the stationary damped wave equation. However, these results do not consider anisotropic damping in a time-dependent setting and so do not provide energy decay results. \thmref{exp_decay_thm} addresses this gap in the literature by providing conditions which guarantee exponential uniform stabilization, in analogy to the classical result of Rauch and Taylor \cite{RauchTaylor1975}.

Since \thmref{exp_decay_thm} only claims the existence of some exponential decay rate $\beta$, a natural question is to determine the optimal rate of decay for a given damping coefficient. Given a fixed $W\in\Psi_{c\ell}^{0}(M)$, we define the \emph{best exponential decay rate} as in \cite{Lebeau1996} via
\begin{equation}\label{alpha_def}
\alpha := \sup\{\beta\in\R:\, \exists C > 0 \text{ such that } E(u,t)\le Ce^{-\beta t}E(u,0) \,\,\,\forall u\text{ which solve \eqref{damped_wave}}\}.
\end{equation}
Our next result shows that $\alpha$ can be expressed in terms two fundamental quantities: the spectral abscissa, and the long-time averages of the principal symbol of $W$ over geodesics. The spectral abscissa is defined with respect to
\[A_W := \lp\begin{array}{cc}0 & \Id \\ \Delta_g & -2W\end{array}\rp,\]
which is the infinitesimal generator of the solution semigroup for \eqref{damped_wave}. For each $R > 0$, we set 
\[D(R) =  \sup\{\Real(\lambda):\,|\lambda| > R,\, \lambda\in \textnormal{Spec}(A_W)\}.\]
We then define the spectral abscissa as 
\begin{equation}\label{spectral_gap}
D_0 = \lim\limits_{R\to 0^+}D(R).
\end{equation}
We also define for $t \in \R$ the time-average of the damping along geodesics 
\[L(t) = \inf\limits_{(x,\xi)\in S^*M}\frac{1}{t}\int\limits_0^t w(\varphi_s(x,\xi))\,ds,\]
and the long-time limit
\begin{equation}\label{time_avg}
L_\infty = \lim\limits_{t\to\infty} L(t).
\end{equation}

\noindent We can then characterize $\alpha$ as follows. 

\begin{theorem}\label{best_const_thm}
The best exponential decay rate for solutions to \eqref{damped_wave} with $W\in \Psi_{c\ell}^0(M)$ is 
\[\alpha = 2\min\{-D_0,L_\infty\},\]
where $D_0$ and $L_\infty$ are defined by \eqref{spectral_gap} and \eqref{time_avg}, respectively. 
\end{theorem}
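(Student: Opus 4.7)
The plan is to prove matching upper and lower bounds on $\alpha$, following the structure of Lebeau's argument \cite{Lebeau1996} for the isotropic case but with the multiplicative damping $b(x)$ replaced everywhere by the principal symbol $w$ on $S^*M$. All analysis is carried out through the infinitesimal generator $A_W$, whose semigroup $e^{tA_W}$ governs the evolution.

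For the upper bound $\alpha \le -2D_0$, I would use the definition of $D_0$ to pick, for any $\ve > 0$, a $\lambda \in \Spec(A_W)$ with $\Real \lambda > D_0 - \ve$. \thmref{exp_decay_thm} together with standard semigroup theory ensures that such $\lambda$ is an eigenvalue of finite multiplicity, and an associated eigenvector generates a solution of \eqref{damped_wave} with $E(u,t) = e^{2\Real\lambda \cdot t}E(u,0)$. Thus $\alpha \le -2\Real\lambda < -2D_0 + 2\ve$, and $\ve \to 0$ gives the bound. For $\alpha \le 2L_\infty$, I would fix $\ve > 0$, pick $T$ large and $(x_0,\xi_0) \in S^*M$ with $\frac{1}{T}\int_0^T w(\varphi_s(x_0,\xi_0))\,ds < L_\infty + \ve$, and construct a high-frequency Gaussian beam microlocalized on the geodesic through $(x_0,\xi_0)$. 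Egorov's theorem identifies the effective damping symbol seen by the beam with $w\circ\varphi_s$, so its energy decays no faster than $e^{-2(L_\infty+\ve)t}$; sending $\ve \to 0$ yields $\alpha \le 2L_\infty$.

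For the lower bound $\alpha \ge 2\min\{-D_0,L_\infty\}$, I would invoke the Gearhart-Pr\"uss theorem, which asserts that $\|e^{tA_W}\|_{\Hscr \to \Hscr}$ decays with rate $\gamma$ provided $\Spec(A_W) \subset \{\Real \lambda \le -\gamma\}$ and $(A_W - \lambda)^{-1}$ is uniformly bounded on each half-plane $\{\Real \lambda \ge -\gamma'\}$ with $\gamma' < \gamma$. Fix $\mu < \min\{-D_0, L_\infty\}$. For $|\Imag\lambda|$ bounded, uniform resolvent control on $\{\Real \lambda \ge -\mu\}$ follows from the definition of $D_0$ and compactness of the spectrum in bounded regions. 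The essential step is the high-frequency range $|\Imag \lambda| \to \infty$, where setting $h = |\Imag\lambda|^{-1}$ reduces matters to a semiclassical resolvent estimate. Using Egorov's theorem to propagate $W$ through the wave semigroup, the time-averaged operator $\frac{1}{T}\int_0^T e^{isA_W}W e^{-isA_W}\,ds$ has principal symbol approximating $\frac{1}{T}\int_0^T w\circ\varphi_s\,ds$, which is bounded below by $\mu$ on $S^*M$ once $T$ is sufficiently large. The sharp G{\aa}rding inequality then converts this symbolic positivity into an operator lower bound, which translates into the desired resolvent estimate.

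The main obstacle is this high-frequency resolvent bound. Since $w$ is generically only positive \emph{on average} along geodesics rather than pointwise, one cannot directly apply G{\aa}rding to $w$ itself; the time-averaging procedure via Egorov is essential and introduces smoothing remainders whose control is delicate. The sum-of-squares representation $W = \sum_{j=1}^N B_j^* B_j$ helps track signs through the calculus, and \assumref{unique_cont} is needed to bridge the bounded-frequency and high-frequency regimes by ruling out accumulation of resolvent poles near the imaginary axis. Finally, all estimates must be uniform in the spectral parameter in order for Gearhart-Pr\"uss to deliver the sharp constant.
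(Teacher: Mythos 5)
Your upper bound $\alpha \le -2D_0$ is essentially the paper's argument, though you invoke \thmref{exp_decay_thm} to justify the eigenvalue structure, which is circular: the paper derives \thmref{exp_decay_thm} \emph{from} \thmref{best_const_thm}. This is easily repaired, since the discreteness of $\Spec(A_W)$ follows from abstract compact-resolvent considerations (cf.\ \cite[Lemma~4.2]{AnantharamanLeautaud2014}), but it is worth flagging.

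Your upper bound $\alpha \le 2L_\infty$ glosses over the central technical difficulty of the anisotropic case. You write that ``Egorov's theorem identifies the effective damping symbol seen by the beam with $w\circ\varphi_s$,'' but the paper does not use Egorov here, and it would not suffice: what is needed is a \emph{quantitative, pointwise-in-$t$} estimate for the action of the operator $W - w(x_t,\xi_t)$ on a coherent state. This is the content of Proposition~\ref{gaussian_beam_prop}, which shows that if a classical symbol vanishes to appropriate order at $(x_0,\xi_0)$, its quantization applied to the coherent state based there is $O(k^{m-\ell/2+\epsilon})$; this feeds into the $O(k^{-1/2+\epsilon})$ quasimode estimate of \propref{dw_quasimode}. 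The paper explicitly identifies this as the point where the pseudodifferential case ``becomes significantly more difficult than the multiplicative setting,'' and invoking Egorov bypasses it without resolution.

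For the lower bound you propose Gearhart--Pr\"uss together with a semiclassical resolvent estimate via time-averaging and sharp G{\aa}rding. This is a genuinely different route from the paper, which (following \cite{Lebeau1996}) contains no resolvent estimate at all: it proves the observability inequality of \lemref{energy_lemma}, introduces the invariant subspaces $H_N$ orthogonal to finitely many generalized eigenspaces of $A_W^*$, and uses the fact that the embedding norm $\theta_N$ of $H_N$ into $L^2\oplus H^{-1}$ tends to zero to absorb the lower-order term. Your route faces two concrete gaps. First, $0 \in \Spec(A_W)$ always (constants solve \eqref{damped_wave}), so the resolvent is never uniformly bounded on any half-plane $\{\Real\lambda \ge -\mu\}$ with $\mu > 0$; you must quotient out $\Ker(A_W)$ or work on the complement of constants with the $\dot{\mathscr H}$ seminorm, and your appeal to \assumref{unique_cont} only rules out \emph{nonzero} imaginary eigenvalues, not this one. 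Second, you assert that the sharp G{\aa}rding bound on the time-averaged symbol ``translates into the desired resolvent estimate,'' but extracting the exact constant $2L_\infty$ (rather than some weaker rate) from a semiclassical averaging argument requires delicate constant-tracking that you have not supplied; the Lebeau compactness argument is designed precisely to avoid having to carry such constants through a resolvent estimate.
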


\begin{rmk}
\textnormal{
It is of considerable note that the formula for the optimal decay rate here is an exact analogy of the multiplicative case studied by Lebeau (c.f. \cite[Theorem 2]{Lebeau1996}). While the broad structure of our proof is similar, there are portions of the analysis which diverge greatly, particularly in \secref{pseudocoherentsection} where we investigate the action of pseudodifferential operators on Gaussian beams.
}
\end{rmk}

\begin{rmk}
\textnormal{
\thmref{best_const_thm} is significantly stronger than \thmref{exp_decay_thm}, although this is not immediately obvious. The main portion of this article is dedicated to the proof of \thmref{best_const_thm}. We then show that \thmref{best_const_thm} implies \thmref{exp_decay_thm} in \secref{thm_1_proof}.
}
\end{rmk}

\thmsref{exp_decay_thm}{best_const_thm} fit into a broad range of existing results which attempt to reproduce the equivalence of the GCC and exponential decay under modified hypotheses on the damping. It is not uncommon for such statements to be somewhat inconclusive. For example, when the damping is allowed to be time-dependent \cite{RousseaLebeauPeppinoTrelat} showed that for time periodic damping, the GCC is indeed equivalent to exponential decay, but it is not currently known if this result is true for non-periodic damping. In the setting where the damping is allowed to take negative values (commonly called ``indefinite damping"), the state of the art is similarly mixed. If $M$ is an open domain in $\Rn$ with $C^2$ boundary, \cite{LiuRaoZhang2002} proves an exponential decay rate provided that the damping is positive in a neighborhood of $\p M$ (which implies the GCC) and $\inf_{x\in M}W(x)$ is not too negative.  However, it is currently not known if an appropriate generalization of the GCC is equivalent to exponential stability in the indefinite case. The limitations of these results illustrate that seemingly simple changes to hypotheses on the damping coefficient can create substantial barriers to reproducing the classical equivalence theorem. So, the fact that \thmref{exp_decay_thm} provides a direct analogy of the GCC for pseudodifferential damping which is equivalent to exponential decay is somewhat exceptional. Note also that these other generalizations do not possess an analogy of \thmref{best_const_thm}. Although \cite{LiuRaoZhang2002} and \cite{RousseaLebeauPeppinoTrelat} both provide a rate for the exponential decay, it is not shown to be sharp.

Our final result concerns \assumref{unique_cont}, which is necessary in order to obtain \thmref{exp_decay_thm}. To see this, suppose that $v$ satisfies $-\Delta_g v = \lambda^2 v$ with $\lambda \ne 0$ and $Wv = 0$. Then, the function
\[u(t,x) = e^{it\lambda}v(x),\]
solves \eqref{damped_wave}, but has energy $E(u,t) = \lambda^2\|v\|^2_{L^2(M)}$ for all $t$. As previously mentioned, when $W$ is a multiplication operator supported on any open set, unique continuation results guarantee that $W$ does not annihilate any eigenfunctions of $\Delta_g$, making \assumref{unique_cont} unnecessary. However, in the pseudodifferential setting, verifying this assumption is more difficult.

A special case in which \assumref{unique_cont} is easy to check is when $W$ is constructed from functions of $\Delta_g$. Suppose $W = B^*B$ with $B = f(-\Delta_g)$, where $f:\R\to\R$ satisfies a ``symbol-type" estimate of the form 
\[|\partial_s^k f| \le C (1 + |s|)^{-k}\]
for any $k.$ The functional calculus of Strichartz \cite{Strichartz1972} shows that $W$ is pseudodifferential when constructed in this way. The calculus also immediately implies that \assumref{unique_cont} holds as long as $f$ does not vanish on the spectrum of $-\Delta_g$, since for any eigenfunction $v$ with eigenvalue $\lambda$, we have $Wv = f(\lambda)^2 v$. However, damping coefficients constructed in this fashion are somewhat uninteresting in the sense that the principal symbol is a function of $|\xi|_g^2$, and therefore independent of direction. Thus, examples of this type are not truly anisotropic. In general, it is not obvious that one can always construct nontrivial anisotropic examples satisfying \assumref{unique_cont}, although we expect that a rich class of examples do indeed exist. The following theorem demonstrates that one can always produce such examples when $(M,g)$ is real analytic. 

\begin{theorem}\label{unique_cont_thm}
If $(M,g)$ is compact and real analytic, then there exists $W \in \Psi_{cl}^0(M)$ of the form $W=\sum_{j=1}^N B_j^*B_j$,such that for each $x\in M$, the principal symbol of $W$ vanishes on an open cone in $T_x^*M$ and for any $v$ an eigenfunction of $\Delta_g, Wv \ne 0.$ 
\end{theorem}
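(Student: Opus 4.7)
The plan is to construct $W$ as a two-term sum $W = B_1^*B_1 + B_2^*B_2$, where $B_1 \in \Psi^0_{cl}(M)$ carries the required vanishing of the principal symbol on an open cone at each point, and $B_2 \in \Psi^0_{cl}(M)$ is a strictly lower-order operator, constructed via functional calculus of $\Delta_g$, whose role is solely to prevent any nonzero eigenfunction of $\Delta_g$ from lying in $\ker W$. The main obstacle is precisely the tension between these two requirements: the vanishing of the principal symbol on an open cone in every fiber precludes microlocal ellipticity in any direction inside the cone, so one cannot rule out eigenfunctions from $\ker W$ by any naive ellipticity argument. The ``spectral'' term $B_2^*B_2$, which does not interfere with the principal symbol of $W$ at order zero but is strictly positive on every eigenspace of $\Delta_g$, does the essential injectivity work.

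First, I would construct a non-negative symbol $b_1 \in S^0_{cl}(T^*M)$ whose zero set contains an open cone in each fiber $T^*_xM$. Take a finite coordinate cover $\{U_j\}_{j=1}^K$ of $M$ with subordinate partition of unity $\{\chi_j\}$, and pick in each chart an open cone $C_j \subset \R^n \setminus 0$, arranged so that for every $x \in M$ the intersection $\bigcap_{j:\, x \in \supp \chi_j} C_j$ contains a non-empty open cone. For each $j$, fix a smooth non-negative function $\beta_j(x,\xi)$ on $T^*U_j$, positively homogeneous of degree zero for $|\xi|$ large, vanishing exactly on $C_j$ and strictly positive elsewhere. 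Set $b_1 := \sum_j \chi_j \beta_j$, which extends smoothly to all of $T^*M$ and lies in $S^0_{cl}(T^*M)$, and let $B_1 := \Op(b_1) \in \Psi^0_{cl}(M)$. Then $B_1^*B_1 \in \Psi^0_{cl}(M)$ has principal symbol $b_1^2$, which vanishes on a non-empty open cone in each fiber.

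Next, I would invoke Strichartz's functional calculus: if $f \in C^\infty([0,\infty))$ satisfies the symbol estimates $|f^{(k)}(s)| \le C_k(1+s)^{m-k}$ for all $k$, then $f(-\Delta_g) \in \Psi^{2m}(M)$. Take $f(s) := (1+s)^{-1}$ and set $B_2 := f(-\Delta_g) \in \Psi^{-2}(M) \subset \Psi^0_{cl}(M)$. For any eigenfunction $v$ with $-\Delta_g v = \lambda^2 v$, functional calculus gives $B_2 v = (1+\lambda^2)^{-1} v \ne 0$, so $B_2$ is injective on each eigenspace and on all of $L^2(M)$.

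Finally, set $W := B_1^*B_1 + B_2^*B_2$ and verify the required properties. The operator $W$ lies in $\Psi^0_{cl}(M)$ and has the required form with $N = 2$. Since $B_2^*B_2 \in \Psi^{-4}(M)$ has order strictly less than zero, it contributes nothing to the principal symbol of $W$ at order zero, so the principal symbol equals $b_1^2$, which vanishes on a non-empty open cone in $T^*_xM$ for each $x$. For any eigenfunction $v$ with eigenvalue $\lambda^2$, positivity of both summands gives
\[\langle Wv, v \rangle = \|B_1 v\|_{L^2}^2 + \|B_2 v\|_{L^2}^2 \ge (1+\lambda^2)^{-2}\|v\|_{L^2}^2 > 0,\]
so $Wv \ne 0$. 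I note that the real-analyticity hypothesis on $(M,g)$ is not essentially used in this construction; I expect the authors may exploit it to produce a more stringent example in which every $B_j$ has a genuinely order-zero anisotropic principal symbol (avoiding the spectral shortcut via functional calculus), presumably via an analytic-microlocal or FBI argument leveraging analyticity of eigenfunctions.
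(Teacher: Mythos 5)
Your argument does prove the literal statement of \thmref{unique_cont_thm}, but by a route that is fundamentally different from — and in an important sense circumvents — the paper's. The crux of your proof is the auxiliary term $B_2=(1-\Delta_g)^{-1}$, whose only job is to force $\|B_2v\|_{L^2}>0$ for every eigenfunction $v$. That is exactly the ``spectral shortcut'' the authors discuss, and implicitly disqualify as ``somewhat uninteresting,'' in the paragraph immediately preceding the theorem: any nonvanishing function of $\Delta_g$ trivially rules out eigenfunctions in the kernel, but in a direction-blind way. Your construction hides this mechanism at order $-4$ while keeping the order-zero principal symbol anisotropic, so the letter of the theorem is satisfied; but it makes the real-analyticity hypothesis superfluous (a red flag you noticed yourself), and it says nothing about whether genuinely anisotropic operators — ones containing no spectral term at all — can satisfy \assumref{unique_cont}, which is the question the theorem is meant to answer. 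The paper instead proves \lemref{unique_cont_condition}: on a real-analytic $(M,g)$ eigenfunctions are analytic, so $\overline{N}(\supp\spacecutoff)\subseteq WF_A(\spacecutoff u)$ by H\"ormander's theorem on analytic wavefront sets at boundaries of supports; if the frequency cutoff $\freqcutoff$ is analytically microlocally invertible in a normal direction to $\supp\spacecutoff$, then that direction survives into $WF_A(\wt\spacecutoff\Op(\freqcutoff)\spacecutoff u)$, forcing $Bu\neq 0$. Analyticity is essential there, and the payoff is that unique continuation is shown to follow purely from the interplay between the cutoff's support and a single cone of ``active'' directions. That lemma — not the functional-calculus trick — is what underwrites the explicit torus examples in \secref{examples}, where every $B_j$ has the form $\wt\spacecutoff_j\Op(\freqcutoff_j)\spacecutoff_j$ and none is built from $\Delta_g$; your argument, if substituted, would leave those examples without a verification of \assumref{unique_cont}.
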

\noindent The fact that the principal symbol vanishes in an open cone of directions at each point implies that the $W$ in this theorem is not built from functions of $\Delta_g$, excluding the somewhat trivial case discussed previously. Using the machinery developed in the proof of \thmref{unique_cont_thm}, we are able to produce explicit examples on the flat 2-torus of operators $W\in\Psi_{cl}^0$ which satisfy both \assumsref{geometric_control}{unique_cont}. This construction is presented in Section \ref{examples}.

 \begin{rmk}
 \textnormal{As mentioned previously, \assumref{unique_cont} follows directly from the geometric control condition in the multiplicative case. One might hope that this could be generalized to the scenario where $W$ is pseudodifferential, but this problem is exceedingly difficult in general. The only result of this type known to the authors is that of \cite{DyatlovJinNonnenmacher2019}, which utilized the fractal uncertainty principle to show that when $M$ is an Anosov surface, $v$ is an eigenfunction of the Laplacian, and the principal symbol of $W$ is not identically zero, then there is a quantitative lower bound on the size of $Wv$. So for Anosov surfaces, \assumref{geometric_control} implies \assumref{unique_cont}. But even in this specialized case, the proof involves highly sophisticated techniques. An analysis of the general case is an open problem, and we suspect that it would be a significant undertaking. 
}
\end{rmk}

\subsection{Outline of the article}
The majority of this article is devoted to the proof of \thmref{best_const_thm}, which spans Sections \ref{defectmeasure}, \ref{pseudocoherentsection} \ref{upper}, and \ref{lower}. The primary tool is microlocal defect measures. We begin in \secref{defectmeasure} by analyzing the behavior of defect measures associated to sequences of solutions to \eqref{damped_wave} when propagated by the Hamiltonian flow for $P = \partial_t^2 - \Delta_g$. The formula we produce follows largely from direct computations and measure-theoretic arguments, in analogy to \cite{Lebeau1996,Klein2017}. In \secref{pseudocoherentsection}, we perform a detailed study of the action of certain pseudodifferential operators on coherent states, which is a critical component of constructing quasimodes for \eqref{damped_wave}. This analysis is a key point where the pseudodifferential case becomes significantly more difficult than the multiplicative setting. In \secref{upper}, we then combine the results of \secsref{defectmeasure}{pseudocoherentsection} to produce quasimodes for the damped wave equation whose energy is strongly localized near a fixed geodesic. The analysis of these quasimodes allows us to prove that $\alpha \leq 2\min\{-D_0,L_\infty\}$. The proof of \thmref{best_const_thm} is completed in \secref{lower}, where we prove the lower bound $\alpha \ge 2\min\{-D_0,L_\infty\}.$ This section follows in close analogy to \cite{Lebeau1996}, and so we omit some of the more technical details. 

In \secref{thm_1_proof}, we show that \thmref{best_const_thm} implies \thmref{exp_decay_thm}. This follows directly from spectral theory analysis. 

Finally, in \secref{examples}, we restrict to the case of real analytic manifolds to produce some examples. We provide a fairly generic condition on pseudodifferential operators which guarantees that they satisfy \assumref{unique_cont}. We also show that one can always produce examples which fall into this category, thus proving \thmref{unique_cont_thm}. We then conclude by constructing some explicit examples on the flat torus which satisfy both \assumsref{geometric_control}{unique_cont}. 

\subsection{Acknowledgements}
The authors would like to thank J. Wunsch and A. Vasy for their frequent helpful comments throughout the course of this project. Wunsch's suggestions regarding the analytic wave front set argument in Section \ref{examples} were particularly useful. We are also grateful to M. Taylor for helping us better understand the proof of his original result with J. Rauch on exponential energy decay in [RT75]. We also wish to thank H. Christianson, Y. Canzani and J. Galkowski for their comments on an earlier version of this paper. Finally, BK was supported in part by NSF Grant DMS-1900519 through his advisor Y. Canzani.

\section{Propagation of the microlocal defect measure}\label{defectmeasure}

\noindent In this section, we compute the propagation of microlocal defect measures associated to a sequence of solutions of \eqref{damped_wave} with $W\in\Psi^0_{c\ell}(M)$. We begin by noting some general facts about microlocal defect measures. We will not prove these here, and we direct the reader to the seminal article of G\'{e}rard \cite{Gerard1991} for details and proofs. Here we consider defect measures as measures on $S^*(\R\times M) = \{(t,x,\tau,\xi)\in T^*(\R\times M):\, \tau^2 + |\xi|_g^2 = \frac{1}{2}\},$ the co-sphere bundle of $\R\times M$ treated as a Riemannian manifold with the the product metric. For any sequence $\{u_k\}\subseteq H^m(\R\times M)$ converging weakly to 0, there exists a sub-sequence $\{u_{k_j}\}$ and a positive Radon measure $\nu$ on $S^*(\R\times M)$ such that for any $A\in\Psi_{c\ell}^{2m}(\R\times M)$ with compact support in $t$, we have
\[\lim\limits_{j\to\infty}\langle Au_{k_j},u_{k_j}\rangle = \int\limits_{S^*(\R\times M)}\!\!\!\!a\,d\nu,\]
where $\langle\cdot,\cdot\rangle$ denotes the standard inner product on $L^2(\R\times M)$, which is the natural pairing between $H^{-1}(\R\times M)$ and $H^1(\R\times M).$ If $\{u_k\}$ has a defect measure without the need for passing to a subsequence, we say that $\{u_k\}$ is \emph{pure}.

From this point on, we specialize to the case where $\{u_k\}\subseteq H^1(\R\times M)$ is a pure sequence of solutions to the damped wave equation converging weakly to zero, with associated defect measure $\nu$. A key piece of the proof of \thmref{best_const_thm} is the propagation of this defect measure under the Hamiltonian flow on $T^*(\R\times M)$ generated by $p(t,x,\tau,\xi) = |\xi|^2_g -\tau^2$, the principal symbol of $P = \partial_t^2 - \Delta_g$. We denote this flow by $\Phi_s$, and it can be written as
\[\Phi_s(t,x,\tau,\xi) = (t-2s\tau,\tau,\varphi_s(x,\xi)),\]
where we recall that $\varphi_s$ is the geodesic flow on $T^*M$. More precisely, the propagation of the defect measure refers to the behavior of $\nu$ under the pushforward $(\Phi_s)_*$. 

We now show that there exists a smooth function $s\mapsto G_s \in C^\infty(S^*(\R\times M))$ such that $(\Phi_s)_*\nu = G_{-s}\nu$, and that $G_s$ can be defined as the solution to a certain differential equation. 

\begin{lemma}\label{defect_measure_propagation}
For any fixed $(t,x,\tau,\xi)\in S^*(\R\times M)$, define $G_s(t,x,\tau,\xi)$ as the solution to the initial value problem
\begin{equation}\label{G_s_defn}
\begin{cases}
G_0(t,x,\tau,\xi)  = 1,\\
\partial_s G_s(t,x,\tau,\xi) = \{p,G_s\}(t,x,\tau,\xi) + 4\tau w(x,\xi) G_s(t,x,\tau,\xi),
\end{cases}
\end{equation}
where $p$ and $w$ are the principal symbols of $P=\partial_t^2 - \Delta_g$ and $W$, respectively. Then, ${(\Phi_s)_*\nu = G_{_{\!\!-s}}\nu}$. Equivalently, for any $b\in C^\infty(S^*(\R\times M))$ which is compactly supported in $(t,x)$, 
\begin{equation}\label{propagation_eqn}
\int\limits_{S^*(\R\times M)}\untab b\circ \Phi_s\,d\nu \,\,\,=\!\!\! \int\limits_{S^*(\R\times M)}\untab b\,G_{_{\!\!-s}}\,d\nu.
\end{equation}
\end{lemma}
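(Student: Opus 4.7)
The plan is to derive an infinitesimal transport identity for $\nu$ via a commutator computation, then integrate it along the Hamiltonian flow $\Phi_s$. I would first verify that $\nu$ is supported on the characteristic variety $\{p=0\}$, which follows from a standard microlocal elliptic regularity argument: since $W\partial_t u_k$ is bounded in $L^2$ while $u_k\to 0$ weakly in $H^1$, the defect measure of $u_k$ must vanish on the elliptic set of $P$.

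The key calculation proceeds as follows. I would fix a self-adjoint $A \in \Psi^1_{c\ell}(\R \times M)$ with principal symbol $a$, compactly supported in $(t,x)$. Using $P u_k = -2W\partial_t u_k$,
\[\langle AP u_k, u_k\rangle + 2\langle AW\partial_t u_k, u_k\rangle = 0.\]
Taking imaginary parts, the self-adjointness of $P$ gives $\Imag\langle APu_k,u_k\rangle = \frac{1}{2i}\langle [A,P] u_k, u_k\rangle$. Since $\frac{1}{i}[A,P] \in \Psi^2_{c\ell}$ is self-adjoint with principal symbol $H_p a = \{p,a\}$, the defect measure formula yields $\lim_{k\to\infty} \Imag\langle APu_k, u_k\rangle = \frac{1}{2}\int H_p a \, d\nu$. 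For the damping term, $AW\partial_t \in \Psi^2_{c\ell}$ has principal symbol $i\tau w a$, which is purely imaginary because $\tau, w, a$ are real; its principal part is therefore anti-self-adjoint, and an analogous computation yields $\lim_{k\to\infty} \Imag\langle AW\partial_t u_k, u_k\rangle = \int \tau w a \, d\nu$. Assembling the pieces produces the infinitesimal identity
\[\int H_p a \, d\nu = -4\int \tau w a \, d\nu.\]

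To integrate this into the pushforward formula, I would set $\mu_s := (\Phi_s)_*\nu$. For any admissible test $b$, the pushforward definition and the commutativity of $H_p$ with its own flow give
\[\frac{d}{ds}\int b \, d\mu_s = \int H_p(b\circ\Phi_s) \, d\nu = -4\int \tau w (b\circ\Phi_s) \, d\nu = -4\int \tau (w\circ\Phi_{-s}) b \, d\mu_s,\]
where the middle step applies the infinitesimal identity to $a = b\circ\Phi_s$ and the last step uses the flow-invariance of $\tau$ (since $H_p\tau = -\partial_t p = 0$) together with the change-of-variables formula for pushforwards. Separately, solving the Cauchy problem for $G_s$ along characteristics of $H_p$ yields the closed-form expression $G_s(q) = \exp\left(4\tau(q)\int_0^s w(\Phi_r(q)) \, dr\right)$, from which $\partial_s G_{-s} = -4\tau(w\circ\Phi_{-s}) G_{-s}$ follows by direct differentiation. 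Hence $G_{-s}\nu$ satisfies the same measure-valued ODE as $\mu_s$, with common initial value $\nu$ at $s=0$, and uniqueness forces $\mu_s = G_{-s}\nu$, which is equivalent to the claim.

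The principal technical difficulty is managing the order-compatibility between the commutator computation and the defect measure formula. Because $H^1$-defect measures pair naturally with order-two symbols, I take $A \in \Psi^1_{c\ell}$ so that both $\frac{1}{i}[A,P]$ and $AW\partial_t$ land in $\Psi^2_{c\ell}$. When applying the infinitesimal identity to $a = b\circ\Phi_s$, which is degree zero in $(\tau,\xi)$, I must extend it to a degree-one symbol on $T^*(\R\times M)\setminus 0$, for instance by multiplying by $\sqrt{\tau^2+|\xi|_g^2}$; this extension is flow-compatible because $\tau$ and $|\xi|_g$ are both preserved by $\Phi_s$. A secondary concern is uniformity in $s$ over compact intervals to justify differentiation under the integral sign, but this follows from standard symbol estimates once $b\circ\Phi_s$ is recognized as a smooth family.
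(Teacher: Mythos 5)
Your proposal is correct, and the underlying ingredients—a commutator computation with a first-order operator, exploiting $Pu_k = -2W\partial_t u_k$, and reconciling the result with the ODE defining $G_s$—are the same as in the paper. The organization differs slightly: you first extract a universal infinitesimal transport identity $\int H_p a\,d\nu = -4\int \tau w a\,d\nu$ for arbitrary first-order $a$, then apply it to $a = b\circ\Phi_s$ to obtain a measure-valued ODE for $(\Phi_s)_*\nu$, and finally match this against the explicit formula $G_s = \exp\bigl(4\tau\int_0^s w\circ\Phi_r\,dr\bigr)$ via an integrating-factor uniqueness argument. The paper instead folds $G_s$ into the quantized symbol from the outset, choosing $B = \Op\bigl(\chi(|\xi|)(b\circ\Phi_s)G_s\bigr)$ and showing $\partial_s\int (b\circ\Phi_s)G_s\,d\nu = 0$ directly from the ODE defining $G_s$, so it never needs to solve for $G_s$ explicitly in the proof of this lemma (that closed form is derived afterward). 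Your route is a bit more modular—the infinitesimal identity stands alone and could be reused—at the cost of needing the explicit solution for $G_s$ and a short uniqueness argument for the measure-valued ODE; the paper's route is more compressed but slightly less transparent about why the measure transports as claimed. Your technical remarks—taking $A\in\Psi^1_{c\ell}$ so that both $\frac{1}{i}[A,P]$ and $AW\partial_t$ land in $\Psi^2_{c\ell}$, extending $b\circ\Phi_s$ to a degree-one symbol by multiplying by the flow-invariant factor $\sqrt{\tau^2+|\xi|_g^2}$, and using $H_p\tau = 0$—are all correctly identified and handled.
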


\begin{rmk}
\textnormal{
Note that the content of \lemref{defect_measure_propagation} is analogous to that of \cite[Proposition 8]{Klein2017}, and the proof goes through in a very similar fashion for the case of pseudodifferential damping. 
}
\end{rmk}

\begin{proof}
First, observe that in order to prove \eqref{propagation_eqn}, it is sufficient to show that for all $s \in \Rb$ and any $b\in C^{\infty}(S^*(\R\times M))$ with compact support in $(t,x)$
\[
\int \limits_{S^*(\R\times M)} (b \circ \Phi_s) G_{s} \, d\nu = \int  \limits_{S^*(\R\times M)} b\, d\nu. 
\]
Furthermore, since $G_0=1$, this is equivalent to showing that
\[\partial_s\untab\int\limits_{S^*(\R\times M)}\untab(b\circ \Phi_s)G_{s}\,d\nu = 0.\]
By direct computation, we see that 
\[\partial_s\left[(b\circ \Phi_s)G_s\right] = \{p,b\circ\Phi_s\}G_s + (b\circ \Phi_s)\partial_s G_s,\]
since $\Phi_s$ is the Hamiltonian flow generated by $p.$ Using algebraic properties of the Poisson bracket, we have $\{p,b\circ\Phi_s\}G_s = \{p,(b\circ \Phi_s)G_s\} - \{p,G_s\}(b\circ \Phi_s).$ Therefore,
\begin{equation}\label{propagation_product}
\partial_s\untab\int\limits_{S^*(\R\times M)}\untab(b\circ \Phi_s)G_s\,d\nu \,\,\, =\!\!\! \int\limits_{S^*(\R\times M)}\untab\{p,(b\circ\Phi_s)G_s\} - \{p,G_s\}(b\circ\Phi_s) + (b\circ\Phi_s)\partial_s G_s\,d\nu.
\end{equation}
To rewrite the first term on the right-hand side above, let us extend $(b\circ \Phi_s)G_s$ to a function on $T^*(\R\times M)$ which is fiber-homogeneous of degree 1 outside a small neighborhood of the zero section. This can be accomplished by choosing some $\chi\in C^\infty(\R)$ which vanishes in a neighborhood of zero and is equal to one outside a slightly larger neighborhood. Then, for any fixed $s\in\R,$ 
\[\chi(|\xi|)(b\circ\Phi_s)G_s\in S^1_{c\ell}(T^*(\R\times M)),\] 
and hence 
\[B := \Op(\chi(|\xi|)(b\circ\Phi_s)G_s) \in \Psi_{c\ell}^1(M).\]
Thus, $[B,P] \in \Psi_{c\ell}^2(M)$, and hence
\[\lim\limits_{k\to\infty}\langle[P,B]u_k,u_k\rangle \,\,\,= \!\!\! \int\limits_{S^*(\R\times M)}\untab\frac{1}{i} \{p,(b\circ\Phi_s)G_s\}\,d\nu\]
by the definition of the defect measure. On the other hand, since each $u_k$ solves the damped wave equation, we also have
\begin{align*}
\langle[P,B]u_k,u_k\rangle & = \langle B u_k,Pu_k\rangle - \langle Pu_k,B u_k\rangle \\
& = \langle B u_k,-2W\partial_t u_k\rangle + \langle 2W\partial_tu_k,B u_k \rangle\\
& = \langle 2(\partial_t WB + BW\partial_t)u_k,u_k\rangle.
\end{align*}
Taking the limit of both sides as $k \ra \infty$ gives
\[
\lim_{k\ra \infty} \langle[P,B]u_k,u_k\rangle = \int\limits_{S^*(\R\times M)}\untab 4 i\tau w (b\circ \Phi_s)G_s\,d\nu.
\]
Therefore,
\[\int\limits_{S^*(\R\times M)}\untab\{p,(b\circ\Phi_s)G_s\}\,d\nu \,\,\, =  -\untab\int\limits_{S^*(\R\times M)}\untab  4 \tau w (b\circ \Phi_s)G_s\,d\nu.\]
Combining the above with \eqref{propagation_product}, we obtain
\[\partial_s\untab \int\limits_{S^*(\R\times M)}\untab (b\circ \Phi_s)G_s\,d\nu\,\,\, = \!\!\!\int\limits_{S^*(\R\times M)}\untab (b\circ \Phi_s)\lp \partial_s G_s -\{p,G_s\} - 4\tau w G_s\rp\,d\nu,\]
which is clearly zero if $G_s$ satisfies \eqref{G_s_defn}.
\end{proof}

Another important observation about the defect measure $\nu$ is its support is closely related to the characteristic set of $P$, defined as 
\[\charset(P) = \{(t,x,\tau,\xi)\in T^*(\R\times M):\, p(t,x,\tau,\xi) = 0\}.\]
The following is a well-known result, but we provide a short proof for the benefit of the reader. 

\begin{lemma}
Given $\{u_k\}$ and $\nu$ as above, the support of $\nu$ is contained in the intersection $\charset(P)\cap S^*(\R\times M)$. 
\end{lemma}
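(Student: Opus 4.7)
The strategy is the standard microlocal elliptic argument: since each $u_k$ solves a wave equation with a forcing term of strictly lower order than $P$, microlocal ellipticity of $P$ off its characteristic set forces the defect measure to vanish there. Concretely, I would fix a point $\rho_0=(t_0,x_0,\tau_0,\xi_0)\in S^*(\R\times M)$ with $p(\rho_0)\ne 0$ and exhibit a nonnegative symbol $a\in S^2_{c\ell}(T^*(\R\times M))$, compactly supported in $(t,x)$, with $a(\rho_0)>0$ and with conic support contained in a neighborhood $\Gamma$ of the ray through $\rho_0$ on which $|p|$ is bounded below. It then suffices to show $\int a\,d\nu=0$, since this forces $\rho_0\notin\supp\nu$.

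The next step is the usual elliptic parametrix construction: since $p$ is bounded away from zero on $\supp a$, the formal series obtained by iterating the principal-symbol relation $q_0=a/p\in S^0_{c\ell}$ produces $Q\in\Psi^0_{c\ell}(\R\times M)$ (with kernel cut off to be compactly supported in $(t,x)$) and $R\in\Psi^{-\infty}$ with compactly supported kernel, satisfying
\[
\Op(a) = QP + R.
\]
Plugging in $Pu_k = -2W\partial_t u_k$ from \eqref{damped_wave} yields
\[
\langle \Op(a)u_k,u_k\rangle = -2\langle QW\partial_t u_k,u_k\rangle + \langle Ru_k,u_k\rangle.
\]

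I would then argue that both terms on the right vanish in the limit. For the first term, $W\in\Psi^0_{c\ell}(M)$ and $Q\in\Psi^0_{c\ell}(\R\times M)$ are $L^2$-bounded, and $\partial_t u_k$ is bounded in $L^2(\R\times M)$ since $u_k$ is bounded in $H^1(\R\times M)$; hence $QW\partial_t u_k$ is bounded in $L^2$. Because $Q$ has compactly supported kernel, the inner product localizes to a compact subset of $\R\times M$, and Rellich's theorem applied to the weak-$H^1$ convergence $u_k\rightharpoonup 0$ gives $u_k\to 0$ in $L^2$ on this compact set, so the pairing tends to $0$. For the second term, $R$ is smoothing with compactly supported kernel, so $Ru_k\to 0$ strongly in $L^2$ (indeed in any Sobolev norm), while $u_k$ is $L^2$-bounded locally, yielding $\langle Ru_k,u_k\rangle\to 0$. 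Combining these gives $\int a\,d\nu=\lim_k\langle \Op(a)u_k,u_k\rangle=0$.

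I expect the only real bookkeeping obstacle to be ensuring that the parametrix $Q$ and remainder $R$ have compactly supported Schwartz kernels so that the compactness arguments apply; this is handled routinely by cutting off the amplitudes in $(t,x)$ slightly outside $\supp a$ and absorbing the resulting error into $R$. The argument also implicitly uses that the principal symbol $\sigma(\Op(a))=a$, interpreted on the cosphere bundle via the natural fiber-homogeneity of degree $2$, is the quantity paired against $\nu$; since the classical symbol class and the conventions of \cite{Gerard1991} are being used throughout, this identification is immediate.
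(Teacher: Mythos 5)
Your proof is correct and follows the same microlocal elliptic strategy as the paper: construct a parametrix off $\charset(P)$, plug in the equation, and kill all the resulting terms using Rellich compactness applied to the weak $H^1\!\rightharpoonup 0$ convergence. The organization is slightly different in a way worth noting. The paper cuts off near $\charset(P)$ with a symbol $\chi$, constructs a parametrix, and folds the term $QPu_k=-2QW\partial_t u_k$ into the "remainder" $R$ (so $R$ is really only of order $-1$ rather than smoothing, but this is all that is used, since only the mapping $L^2\to H^1$ is needed); it then shows $\psi(t)(\Id-\Op(\chi))u_k\to 0$ strongly in $H^1$ and pairs this against $(\partial_t^2+\Delta_g)\psi(\Id-\Op(\chi))$. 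You instead test a symbol $a$ directly, build the parametrix relation $\Op(a)=QP+R$ with $Q\in\Psi^0$, and handle the forcing term $-2QW\partial_t u_k$ explicitly: it is bounded in $L^2$ and pairs against $u_k$, which converges strongly to $0$ in $L^2_{\mathrm{loc}}$ by Rellich. This is more explicit about the role of the damping term, which the paper glosses over; both versions are sound and of comparable length. The one routine detail you flag yourself — cutting the kernels of $Q,R$ so they are compactly supported in $(t,x)$, which is needed both to localize the pairings and because $u_k$ is only bounded in $H^1_{\mathrm{loc}}$ on the noncompact factor $\R$ — is indeed necessary and handled correctly.
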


\begin{proof}
First, let $\chi\in C^\infty(\R\times M)$ be supported in a neighborhood of $\charset(P)$ and identically one on a slightly smaller neighborhood. Then, since $P$ is elliptic on the support of $1 - \chi$, there exist a parametrix $Q\in \Psi_{c\ell}^{-2}(\R\times M)$ such that 
\[(\Id-\Op(\chi))u_k = QP u_k + Ru_k = Ru_k\]
for some smoothing operator $R$ and all $k$. Now fix an interval $I$ and let $\psi\in C_0^\infty(I)$. Since $R$ is smoothing,
\[
\|\psi(t)(I-\Op(\chi))u_k\|_{H^1(\R\times M)} = \|\psi(t)Ru_k\|_{H^1(I\times M)}\le \|u_k\|_{L^2(I\times M)}.
\]
By assumption, $u_k$ converges weakly to zero in $H^1(\R\times M)$, and therefore its restriction to $I\times M$ converges weakly to zero in $H^1(I\times M)$. Thus, there exists a subsequence $\{u_{k_j}\}$ which coverges strongly to zero in $L^2(I\times M)$, since $I\times M$ is compact. Therefore, $\|u_{k_j}\|_{L^2(I\times M)}\to 0$, which implies that $\psi(t)(\Id - \Op(\chi))u_{k_j} \to 0$ strongly in $H^1(I\times M)$ . Now consider the operator 
\[(\partial_t^2+ \Delta_g)\left[\psi(t)(I-\Op(\chi))\right]\in \Psi^2_{c\ell}(\R\times M),\]
which is supported in $I\times M.$ Since extracting a subsequence does not change the defect measure, we must have
\begin{equation}\label{defectcutoff}\langle(\partial_t^2+\Delta_g)\left[\psi(t)(\Id - \Op(\chi))u_{k_j}\right],u_{k_j})\rangle \to -\frac{1}{2}\!\!\!\!\!\int\limits_{S^*(\R\times M)}\!\!\!\!\!\psi(t)(1-\chi(t,x,\tau,\xi))\,d\nu,
\end{equation}
recalling that $\tau^2 + |\xi|_g^2 = \frac{1}{2}$ on $S^*(\R\times M).$ However, we also have that 
\begin{align*}
 & \|(\partial_t^2 + \Delta_g)\left[\psi(t)(\Id - \Op(\chi))\right]u_{k_j}\|_{H^{-1}(I\times M)} \\
 & \hskip 1in\le C\|\psi(t)(\Id - \Op(\chi))u_{k_j}\|_{H^1(I\times M)}.
\end{align*}
Recalling that $\psi(t)(I-\Op(\chi))u_{k_j}\to 0$ strongly in $H^1(I\times M)$ and noting that $\{u_{k_j}\}$ is bounded in $H^{1}(I\times M)$, we must have
\[\langle(\partial_t^2 + \Delta_g)\left[\psi(t)(\Id - \Op(\chi))u_{k_j}\right],u_{k_j})\rangle\to 0.\]
Combining this with \eqref{defectcutoff}, we see that the support of $\nu$ must be disjoint from that of $\psi(t)(1-\chi(t,x,\tau,\xi))$. Since $\psi$ was arbitrary, and the argument holds for any $\chi$ with the appropriate support properties, we must have that the support of $\nu$ is contained in $\charset(P)\cap S^*(\R\times M).$ 
\end{proof}

Observe that $p = 0$ exactly when $\tau = \pm |\xi|_g$, and therefore $\charset(P)\cap S^*(\R\times M)$ is comprised of the two connected components 
\[S^\pm = \{\tau = \mp 1/2\}\cap S^*(\R\times M).\]
It is helpful to define $\nu^\pm$ and $G_s^\pm$ to be the restrictions of $\nu$ and $G_s$ to $S^\pm$, respectively. The definition of $G_s$ given in \eqref{G_s_defn} then implies that 
\begin{equation}\label{G_pm_eqn}
\partial_s G_s^\pm = \{p,G_s^\pm\} \mp 2 w G_s^\pm.
\end{equation}
Now, since $w$ depends only on $(x,\xi)$ and $\tau$ is constant on $S^+$, we may treat $G_s^\pm$ as functions on $S^*M$. For the purposes of this article, it suffices to consider only $G_s^+$. It follows immediately from \lemref{defect_measure_propagation} that $G_s^+$ gives the propagation of $\nu^+$ on $S^+$ under the flow $\Phi_s$ i.e. $(\Phi_s)_*\nu^+ = G_{-s}^+\nu^+.$

As in \cite{Klein2017}, we claim that $G_s^+$ can be realized as the solution of a much simpler differential equation by observing that it is a cocycle map. That is, for any $(x,\xi)\in S^*M$ and any $r,s\in\R$, we have $G_{s+r}^+(x,\xi) = G_r^+(\varphi_s(x,\xi)) G_s^+(x,\xi).$ To see this, note that by \eqref{G_pm_eqn} and properties of the Poisson bracket,
\begin{align*}
\partial_s\lp(G_r^+\circ\varphi_s)G_s^+\rp & = (G_r^+\circ\varphi_s)\lp\{p,G_s^+\} - 2w G_s^+\rp + \{p,G_r^+\circ\varphi_s\}G_s^+\\
& = \{p,(G_r^+\circ\varphi_s)G_s^+\} - 2w(G_r^+\circ\varphi_s)G_s^+.
\end{align*}
So, $(G_r^+\circ \varphi_s)G_s^+$ and $G_{s+r}^+$ both satisfy the same initial value problem, and must be equal. Using the cocycle property and the fact that $G_0^+ \equiv 1$, we have 
\begin{align*}
\partial_s G_s^+ &= \lim\limits_{h\to 0} \frac{G_{s+h}^+ - G_s^+}{h} = \lim_{h \ra 0} \frac{(G_h^+\circ\varphi_s)G_s^+ - G_s^+}{h} \\
&=G_s^+ \lim_{h \ra 0} \frac{G_h^+ \circ \varphi_s -G_0^+ \circ \varphi_s}{h}= G_s^+\partial_r \lp G_r^+\circ\varphi_s\rp\big|_{r=0}.
\end{align*}
Since $G_0^+ \equiv 1$, we have that $\{p,G_r^+\}|_{r=0} =0$. This along with the fact that $\varphi_s$ is independent of $r$ gives $\partial_r(G_r^+\circ \varphi_s)\big|_{r=0} = -2w\circ\varphi_s.$ Thus, $G_s^+$ can be realized as the solution of the initial value problem
\[\begin{cases}
G_0^+(x,\xi) = 1\\
\partial_s G_s^+(x,\xi) = -2w(\varphi_s(x,\xi))G_s^+(x,\xi),
\end{cases}\]
which has solution
\begin{equation}\label{G_t_plus}
G_s^+(x,\xi) = \exp\lp-\int\limits_0^s 2w(\varphi_r(x,\xi))\,dr\rp.
\end{equation} 
Thus, the propagation of the defect measure exhibits exponential decay in proportion to the amount of time geodesics spend in the region where $w(x,\xi)$ is positive. 

\section{Pseudodifferential Operator Acting on Coherent States}\label{pseudocoherentsection}
A key component of the proof of \thmref{best_const_thm} is to build quasimodes for \eqref{damped_wave} using Guassian beams, which are strongly localized along a given geodesic. In this section, we obtain precise estimates for pseudo-differential operators acting on slightly simpler objects, namely coherent states. A coherent state on $\R^n$ is a sequence of smooth functions $\{h_k\}$ taking the form
\begin{equation}\label{coherent_state}
h_k(x) = k^{\frac{n}{4}}e^{ik\langle x-x_0,\xi_0\rangle}e^{\frac{ik}{2}\langle A(x-x_0),(x-x_0)\rangle}b(x)
\end{equation}
for some fixed $(x_0,\xi_0)\in S^*\R^{n}$, where $b\in C_c^\infty(\R^n)$ and $A\in \C^{n\times n}$ has positive definite imaginary part. Heuristically, one thinks of $h_k$ as being strongly microlicalized near $(x_0,\xi_0)$. The objective of this section is to show that if a symbol $a\in S^m_{c\ell}(\R^{2n})$ vanishes to some finite order at $(x_0,\xi_0)$, then $\|\Op(a)h_k\|_{L^2(\R^n)}$ satisfies a bound which depends on the symbol order $m$ and on the order of vanishing. 

\begin{rmk}
\textnormal{
For the purposes of this section only, we use the more standard convention that $S^*\R^n = \{(x,\xi)\in\R^{2n}:\,|\xi| = 1\}$ for the sake of convenience, but this does not alter the analysis in any way. 
}
\end{rmk}
\begin{prop}\label{gaussian_beam_prop}
Fix $(x_0,\xi_0)\in S^*\R^{n}$, $b\in C_c^\infty(\R^n)$, and a matrix $A\in\Cb^{n\times n},$ with positive definite imaginary part. Then, for any $k \ge 1,$ let $h_k$ be given by \eqref{coherent_state}. Let $a\in S_{c\ell}^{m}(\R^{2n})$ have compact support in $x$ and a polyhomogeneous expansion given by
\[a\sim\sum\limits_{j\ge 0} a_{m-j},\]
where each $a_{m-j}\in S_{c\ell}^{m-j}(\R^{2n})$ satisfies $a_{m-j}(x,s\xi) = s^{m-j}a_{m-j}(x,\xi)$ for all $s > 0$ and $|\xi|\ge c > 0$ for some small $c$. Suppose there exists an $\ell\in\N$ such that $a_{m-j}$ vanishes to order $\ell -2j$ at $(x_0,\xi_0)$ for all $j\le \frac{\ell}{2}$. Then, for each $\ve >0$, there exists a $C_\ve > 0$ so that
\begin{equation}
\|\Op(a)h_k\|_{L^2(\R^n)} \le C_\ve k^{m-\frac{\ell}{2}+\ve}.
\end{equation}
\end{prop}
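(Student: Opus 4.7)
The approach is to rescale the problem to a fixed reference scale in phase space, exploiting the fact that $h_k$ is microlocalized in an $O(1/\sqrt{k})$-thin patch around $(x_0,k\xi_0)\in T^*\R^n$. Explicitly, I would first compute $\hat h_k$ directly as a Gaussian Fourier integral, then substitute $x=x_0+u/\sqrt k$ and $\xi=k\xi_0+\sqrt k\,\eta$ into the oscillatory integral for $\Op(a)h_k$. After the rapidly oscillating phases $e^{\pm ik\langle x_0,\xi_0\rangle}$ and $e^{\pm i\sqrt k\langle x_0,\eta\rangle}$ cancel, I expect to arrive at an identity of the form
$$\|\Op(a)h_k\|_{L^2(\R^n)}^2 = (2\pi)^{-2n}\int_{\R^n}\left|\int_{\R^n} e^{i\langle u,\eta\rangle}a\lp x_0+\tfrac{u}{\sqrt k},\,k\xi_0+\sqrt k\,\eta\rp\widetilde G_k(\eta)\,d\eta\right|^2 du,$$
where $\widetilde G_k(\eta)=\int e^{-i\langle z,\eta\rangle}e^{i\langle Az,z\rangle/2}b(x_0+z/\sqrt k)\,dz$ is a family of Schwartz functions in $\eta$ with uniform-in-$k$ Schwartz bounds. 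This replaces both the coherent-state amplitude and all of its $k$-dependence with a fixed Schwartz profile on the $(u,\eta)$-side.

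With this representation in hand, I would truncate the polyhomogeneous expansion as $a=\sum_{j<N}a_{m-j}+r_N$ with $N>\ell/2$, so that the rescaling plus the symbol estimate $|r_N(x,k\xi_0+\sqrt k\,\eta)|\lesssim k^{m-N}$ on the effective support $|\eta|\lesssim\sqrt k$ of $\widetilde G_k$ yield $\|\Op(r_N)h_k\|_{L^2}\lesssim k^{m-N}$. For each retained homogeneous component $a_{m-j}$ with $j\le\lfloor\ell/2\rfloor$, I use positive homogeneity of degree $m-j$, valid on $|\xi_0+\eta/\sqrt k|\ge c$ and hence for $|\eta|\le c_0\sqrt k$, to write
$$a_{m-j}\lp x_0+\tfrac{u}{\sqrt k},\,k\xi_0+\sqrt k\,\eta\rp = k^{m-j}\,a_{m-j}\lp x_0+\tfrac{u}{\sqrt k},\,\xi_0+\tfrac{\eta}{\sqrt k}\rp.$$
Because all derivatives of $a_{m-j}$ of order strictly less than $\ell-2j$ vanish at $(x_0,\xi_0)$, Taylor's theorem gives the pointwise bound
$$\left|a_{m-j}\lp x_0+\tfrac{u}{\sqrt k},\,\xi_0+\tfrac{\eta}{\sqrt k}\rp\right|\le C\,k^{-(\ell-2j)/2}\,(|u|+|\eta|)^{\ell-2j}$$
on any compact $(u,\eta)$-set. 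The two displays combine to give the key estimate
$\lvert a_{m-j}\rvert\lesssim k^{m-\ell/2}(|u|+|\eta|)^{\ell-2j}$,
and the polynomial factor paired with the Schwartz function $\widetilde G_k(\eta)$ and then Fourier-transformed in $\eta$ produces a Schwartz bound in $u$, so the resulting $L^2(du)$-integral is uniformly bounded.

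The components $a_{m-j}$ with $\ell/2<j<N$ carry no vanishing hypothesis, but homogeneity and boundedness on the cosphere give $|a_{m-j}(x_0+u/\sqrt k,\,k\xi_0+\sqrt k\,\eta)|\lesssim k^{m-j}(1+|\eta|/\sqrt k)^{m-j}$, which integrated against $\widetilde G_k$ contributes $O(k^{m-j})=o(k^{m-\ell/2})$. The complementary region $|\eta|\gtrsim\sqrt k$, where both homogeneity and the Taylor expansion break down, is absorbed into a $O(k^{-\infty})$ error by the uniform Schwartz decay of $\widetilde G_k$. The principal technical obstacle I anticipate is synchronizing two asymptotic analyses that naturally live on disjoint regions of phase space: the polyhomogeneity of each $a_{m-j}$ is a far-field statement ($|\xi|\ge c$), while the vanishing at $(x_0,\xi_0)$ is a local statement at a finite point. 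Reconciling them through a dyadic cutoff in $|\eta|/\sqrt k$ and summing the finitely many homogeneous terms, while tracking symbol orders against vanishing orders, is where the stated $k^\ve$ loss plausibly enters as the cost of absorbing the junction between these regimes.
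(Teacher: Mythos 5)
Your rescaling to $(u,\eta)$ with $x=x_0+u/\sqrt k$, $\xi=k\xi_0+\sqrt k\,\eta$, and the reduction of the coherent state to a $k$-uniform Schwartz profile $\widetilde G_k$, is essentially the same change of variables the paper performs (the paper writes it as $x\mapsto k^{-1/2}x$, $\xi\mapsto k^{1/2}\xi$, with the center $k^{1/2}\xi_0$ appearing in the argument of $F_k$). The truncation of the polyhomogeneous expansion and the treatment of the remainder $r_N$ are also the same in spirit. Where you genuinely diverge is in the treatment of the leading homogeneous pieces: the paper introduces an auxiliary cutoff $\chi_{k,\alpha}$ localizing to a window of radius $k^{\alpha}$ ($0<\alpha<1/2$) around $k^{1/2}\xi_0$, then Taylor-expands $a_{m-j}$ at the symbol level into $x^{\gamma}f_{m-j,\gamma}$ and $(\xi/|\xi|-\xi_0)^{\gamma}g_{m-j,\gamma}$ pieces ($\mathcal A_{j,1}$, $\mathcal A_{j,2}$, $\mathcal R_j$), and the factor $k^{\alpha\ell_j}$ in the $\mathcal A_{j,2}$ bound is precisely what produces the $k^{\ve}$ loss. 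You instead Taylor-expand directly in the rescaled variables after pulling out the homogeneity factor $k^{m-j}$, obtaining $|a_{m-j}(x_0+u/\sqrt k,\xi_0+\eta/\sqrt k)|\lesssim k^{-\ell_j/2}(|u|+|\eta|)^{\ell_j}$ uniformly on $|\eta|\lesssim\sqrt k$. This is a cleaner route: since the Taylor bound is uniform on the whole region where homogeneity is valid, no intermediate-scale cutoff is needed, and if carried out precisely it would give $\|\Op(a)h_k\|\lesssim k^{m-\ell/2}$ with no $\ve$ at all. So your attribution of the $k^\ve$ loss to the ``junction between regimes'' is actually misplaced: in the paper it comes from the choice $\alpha>0$, and in your approach it need not appear.

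There is, however, a real gap in the middle of your sketch. You write that the pointwise bound $|a_{m-j}|\lesssim k^{m-\ell/2}(|u|+|\eta|)^{\ell-2j}$ ``paired with the Schwartz function $\widetilde G_k(\eta)$ and then Fourier-transformed in $\eta$ produces a Schwartz bound in $u$.'' A pointwise bound cannot be Fourier transformed, and the integrand $a_{m-j}(x_0+u/\sqrt k,\xi_0+\eta/\sqrt k)\,\widetilde G_k(\eta)$ also depends on $u$ through the first argument, so the $\eta$-integral is not literally a Fourier transform of a fixed function. To obtain decay in $u$ for the $L^2(du)$ integral you must integrate by parts in $\eta$ against the phase $e^{i\langle u,\eta\rangle}$, and that requires bounds on $\partial_\eta^{\beta}\bigl[a_{m-j}(x_0+u/\sqrt k,\xi_0+\eta/\sqrt k)\bigr]$, not just on $a_{m-j}$ itself. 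These bounds are available: each $\eta$-derivative brings down a factor $k^{-1/2}\partial_\xi$, and $\partial_\xi^{\beta}a_{m-j}$ vanishes to order $\max(\ell_j-|\beta|,0)$ at $(x_0,\xi_0)$, so a Taylor argument applied to the derivative gives $|\partial_\eta^{\beta}[\cdots]|\lesssim k^{-\ell_j/2}(1+|u|+|\eta|)^{\ell_j}$ uniformly on $|\eta|\lesssim\sqrt k$ for $|\beta|\le\nu$ fixed, after which integrating by parts $\nu>\ell_j+(n+1)/2$ times in $\eta$ yields an $L^2(du)$-integrable majorant with the correct power $k^{m-\ell/2}$. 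You should supply these derivative bounds and the explicit integration-by-parts argument; as written the step from the pointwise estimate to the $L^2(du)$ conclusion does not follow. With this repaired, your proof is valid and in fact gives a marginally sharper statement than Proposition \ref{gaussian_beam_prop}.
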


\begin{proof}
By the polyhomogeneity of $a$, for any $N_0 \geq 0$ there exists $r_{N_0}\in S_{c\ell}^{m-N_0}$ such that 
\begin{equation}\label{a_expansion}
a = \sum_{j=0}^{N_0-1} a_{m-j}+r_{N_0}.
\end{equation}
We begin with the following lemma, which handles the remainder term in this expansion. 
\begin{lemma}\label{remainder_lemma}
Let $r\in S_{c\ell}^{-s}(R^{2n})$ with $s \ge 0.$ Then, there exists a $C> 0$ such that 
\begin{equation}\label{remainder_bound}
\|\Op(r)h_k\|_{L^2(\R^{n})} \leq Ck^{-\frac{s}{2}}.
\end{equation}
\end{lemma}
\begin{proof}
By the quantization formula, we have 
\[\|\Op(r)h_k\|_{L^2(\R^{n})}^2 =  \int\limits_{\R^n}\left|\int\limits_{\R^{2n}}e^{i\langle x- y,\xi\rangle}r(x,\xi)h_k(y)\,dy\,d\xi\right|^2\,dx. \]
Assume without loss of generality that $x_0 = 0$. We then change variables via $x \mapsto k^{-\frac{1}{2}}x$, $y \mapsto k^{-\frac{1}{2}}y$, and $\xi\mapsto k^{\frac{1}{2}}\xi.$ Recalling the definition of $h_k$, we obtain
\begin{equation}\label{remainder_term}
\|\Op(r) h_k \|^2_{L^2(\R^n)} = \int\limits_{\R^n}\left|\int\limits_{\R^{2n}}e^{i\langle x- y,\xi\rangle}r(k^{-\frac{1}{2}}x,k^{\frac{1}{2}}\xi)e^{ik^{\frac{1}{2}}\langle x,\xi_0 \rangle}e^{\frac{i}{2}\langle Ay,y\rangle}b(k^{-\frac{1}{2}}y)\,dy\,d\xi\right|^2\,dx.
\end{equation}
For notational convenience, we define $g_A(y) = e^{\frac{i}{2}\langle A y,y\rangle}$ and let $\tau_s:C^\infty(\R^n)\to C^\infty(\R^n)$ denote dilation by $s > 0$. That is, $\tau_s f(y) = f(sy).$ Then, using $\hat{\empty}$ to denote the standard Fourier transform, we define  
\begin{equation}\label{F_k}
F_k(\xi):=\int_{\Rn} e^{-i\<y,\xi\>} g_A(y) b(k^{-1/2} y) dy= k^{n/2}[\wh g_A\ast\tau_{k^{\frac{1}{2}}}\wh b](\xi).
\end{equation}
Thus, we can rewrite \eqref{remainder_term} as 
\begin{equation}\label{remainder_FT}
\|\Op(r)h_k\|_{L^2(\R^n)}^2 = \int\limits_{\R^n}\left|\int\limits_{\R^{n}}e^{i\langle x,\xi\rangle}r(k^{-\frac{1}{2}}x,k^{\frac{1}{2}}\xi) F_k(\xi - k^{\frac{1}{2}}\xi_0)d\xi\right|^2\,dx.
\end{equation}
We claim that for any $N\in\N$ and any multi-index $\beta,$ there exists a constant $C_{N,\beta} > 0$ so that 
\begin{equation}\label{schwartz_bound}
\left|\partial_\xi^\beta F_k(\xi)\right| \le C_{N,\beta} (1+|\xi|)^{-N} \quad \text{for all }k\in\N.
\end{equation}
To see this, consider the case where $|\beta| = 0$ and note that 
\begin{align*}
\left| (1 + |\xi|)^{N}k^{n/2}[\wh g_A\ast\tau_{k^{\frac{1}{2}}}\wh b](\xi)\right| &\le C_N k^{n/2}\int\limits_{\R^n}(1 + |\xi-\eta|)^{N}(1+|\eta|)^{N}|\wh g_A(\xi-\eta)|\,|\wh b(k^{\frac{1}{2}}\eta)|\,d\eta\\
& \le C_{N}k^{n/2}\int\limits_{\R^n}(1+|\eta|)^{N}|\wh b(k^{\frac{1}{2}}\eta)|\,d\eta,
\end{align*}
where the last inequality follows from the fact that $\wh g_A$ is Schwartz class, since $A$ has positive definite imaginary part. Now, observe that 
\[k^{n/2}\int\limits_{\R^n}(1 + |\eta|)^{N}|\wh b(k^{\frac{1}{2}}\eta)\,d\eta|\le k^{n/2}\int\limits_{\R^n}\frac{(1+k^{\frac{1}{2}}|\eta|)^{N+n+1}}{(1+k^{\frac{1}{2}}|\eta|)^{n+1}}|\wh b(k^{\frac{1}{2}}\eta)|\,d\eta \le C_Nk^{n/2}\int\limits_{\R^n}(1+k^{\frac{1}{2}}|\eta|)^{-n-1}\,d\eta,\]
for some new $C_N>0$, since $\wh b$ is Schwartz-class. Changing variables via $\eta\mapsto k^{-\frac{1}{2}}\eta$, we obtain that
\[\left| (1 + |\xi|)^{N}k^{n/2}[\wh g_A\ast\tau_{k^{\frac{1}{2}}}\wh b](\xi)\right| \le C_N k^{n/2}\]
after potentially increasing $C_N.$ Dividing through by $k^{n/2}(1+|\xi|)^N$ completes the proof of \eqref{schwartz_bound} for $|\beta| = 0$. To obtain the estimate when $|\beta|\ne 0,$ simply repeat the above proof with $\wh g_A$ replaced by $\partial_\xi^\beta \wh g_A.$

Now, in order to estimate \eqref{remainder_FT} we introduce a smooth cutoff function $\chi$ which is identically one in a neighborhood of $x=0$. We then write 
\[\|\Op(r)h_k\|_{L^2(\R^n)}^2 = I + \II,\]
where $I$ is defined by
\[
I = \int\limits_{\R^n}\left|\int\limits_{\R^{n}}e^{i\langle x,\xi\rangle}\chi(x)r(k^{-\frac{1}{2}}x,k^{\frac{1}{2}}\xi) F_k(\xi - k^{\frac{1}{2}}\xi_0)d\xi\right|^2\,dx,
\]
and $\II$ is defined analogously with $\chi(x)$ replaced by $1 - \chi(x)$. To estimate $I$, we note that when $|\xi| \le 1$,
\[|r(k^{-\frac{1}{2}}x,k^{\frac{1}{2}}\xi)F_k(\xi - k^{\frac{1}{2}}\xi_0)|\le C_{N}(1+|\xi-k^{\frac{1}{2}}\xi_0|)^{-N}\le C_{N}'k^{-N/2},\]
for some $C_N,\,C_N' >0$ and any $N$, by \eqref{schwartz_bound} and the fact that $r$ has nonpositive order and is therefore uniformly bounded. Thus,
\begin{equation}\label{chi_op_eqn}
 I = \int\limits_{\R^n}\left|\int\limits_{|\xi|\ge 1}e^{i\langle x,\xi\rangle}\chi(x)r(k^{-\frac{1}{2}}x,k^{\frac{1}{2}}\xi) F_k(\xi-k^{\frac{1}{2}}\xi_0)d\xi\right|^2\,dx +\mathcal O(k^{-\infty}).
\end{equation}
Now, when $|\xi|\ge 1$,  
\[|r(k^{-\frac{1}{2}}x,k^{\frac{1}{2}}\xi)|\le C (1 + k^{\frac{1}{2}}|\xi|)^{-s}\le  C k^{-\frac{s}{2}}.\]
Combining this with \eqref{chi_op_eqn}, we have
\begin{equation}\label{eq:chirn}
I\le C k^{-s}\|F_k\|_{L^1(\R^n)}^2 \le C' k^{-s},
\end{equation}
where the final inequality follows from \eqref{schwartz_bound}.

Now, consider $\II$. Since $1-\chi$ vanishes in a neighborhood of $x = 0$, we may integrate by parts arbitrarily many times in $\xi$ using the operator $\frac{\langle x,\nabla_\xi\rangle}{i|x|^2}$, which preserves $e^{i\langle x, \xi \rangle}$. That is, for any $\nu \geq 0$, we have
\[\II = \int\limits_{\R^n}\left|\int\limits_{\R^{n}}e^{i\langle x,\xi\rangle}\lp 1-\chi(x)\rp \lp\frac{i\langle x,\nabla_\xi\rangle}{|x|^2}\rp^{\nu}\lp r(k^{-\frac{1}{2}}x,k^{\frac{1}{2}}\xi) F_k(\xi-k^{\frac{1}{2}}\xi_0)\rp d\xi\right|^2\,dx.\]
By \eqref{schwartz_bound} and the fact that $r \in S_{c\ell}^{-s}$, we have for any multi-index $\beta$ and any $N$,
\[\left|\partial_\xi^\beta\lp r(k^{-\frac{1}{2}}x,k^{\frac{1}{2}}\xi)F_k(\xi-k^{\frac{1}{2}}\xi_0)\rp\right|\le \sum\limits_{|\gamma|\le |\beta|} C_{\gamma,N}k^{\frac{|\gamma|}{2}}(1+k^{\frac{1}{2}}|\xi|)^{-s-|\gamma|}(1+|\xi-k^{\frac{1}{2}}\xi_0|)^{-N}.\]
In the region where $|\xi| \ge 1$, the above is bounded by $C_{N}k^{-\frac{s}{2}}(1+|\xi-k^{\frac{1}{2}}\xi_0|)^{-N}$ for some $C_N>0$. Alternatively, when $|\xi|\le 1$, we have a bound of the form $C_{N}k^{-N/2}$, since ${1+|\xi-k^{\frac{1}{2}}\xi_0|\ge Ck^{\frac{1}{2}}}$. Combining these facts, we have 
\[\int\limits_{\R^n}\left|\int\limits_{\R^{n}}e^{i\langle x,\xi\rangle}\lp 1-\chi(x)\rp \lp\frac{i\langle x,\nabla_\xi\rangle}{|x|^2}\rp^{\nu}\lp r(k^{-\frac{1}{2}}x,k^{\frac{1}{2}}\xi) F_k(\xi-k^{\frac{1}{2}}\xi_0)\rp d\xi\right|^2\,dx \le C_Nk^{-s},\]
for some $C_N>0$, provided $\nu > \frac{n+1}{2}$ so that the integral in $x$ is convergent. Therefore,
$$
\II \leq C_N k^{-s}.
$$
Combining this with \eqref{eq:chirn} and taking square roots of both sides completes the proof.

\end{proof}

We now return to the proof of \propref{gaussian_beam_prop}. We aim to estimate each of the terms in the sum in \eqref{a_expansion} separately. By definition, for each $j\le N_0-1$,
\[
\|\Op(a_{m-j}) h_k\|_{L^2(\R^n)}^2 = \int_{\Rn}\left|\int_{\Rb^{2n}} e^{i\<x-y,\xi\>} a_{m-j}(x,\xi) h_k(y) dy d\xi\right|^2 dx.
\]

\noindent As before, we change variables via $x \mapsto k^{-\frac{1}{2}}x, y\mapsto k^{-\frac{1}{2}}y,$ and $ \xi \mapsto k^{\frac{1}{2}}\xi.$ This gives 
$$
\|\Op(a_{m-j}) h_k\|_{L^2(\R^n)}^2 =\int_{\Rn} \left| \int_{\Rb^{2n}} e^{i\<x-y,\xi\>}a_{m-j}(k^{-\frac{1}{2}}x,k^{\frac{1}{2}}\xi) F_k(\xi-k^{\frac{1}{2}}\xi) \right|^2dx,
$$
where $F_k$ is given by \eqref{F_k}. Now, let $\chi(x,\xi) = \chi(\xi)$ be a smooth function which is identically one for $|\xi|\le \frac{1}{2}$ and zero outside $|\xi|\le 1$. Then, for any $0<\alpha<1/2$ and any $k>0$, define
\[
\chi_{k,\alpha}(\xi) = \chi(k^{-\alpha}(\xi-k^{\frac{1}{2}}\xi_0)),
\]
so that $\chi_{k,\alpha}$ is identically one on the ball of radius $\frac{1}{2}k^{\alpha}$ centered at $k^{\frac{1}{2}} \xi_0$ and zero outside the corresponding ball of radius $k^\alpha$. Since $\chi_{k,\alpha}$ is supported on $|\xi|>2$ for sufficiently large $k$, the homogeneity of $a_{m-j}$ implies
\begin{equation}\label{a_j_homog}
\chi_{k,\alpha}(\xi) a_{m-j}(k^{-\frac{1}{2}}x, k^{\frac{1}{2}} \xi) = k^{\frac{m-j}{2}} \chi_{k,\alpha}(\xi) a_{m-j}(k^{-\frac{1}{2}}x, \xi). 
\end{equation}
Recall that $a_{m-j}$ vanishes to order $\ell_j:=\ell-2j$ at $(x_0=0,\xi_0)$ for all $j \leq  \frac{\ell}{2}$, so by Taylor expansion, there exists a collection of $f_{m-j,\gamma},\,g_{m-j,\gamma}\in S_{c\ell}^{m-j}(\R^{2n})$ such that
\[a_{m-j}(x,\xi)= \sum\limits_{|\gamma|=\ell_j}x^{\gamma} f_{m-j,\gamma}(x,\xi) + \lp\frac{\xi}{|\xi|}-\xi_0\rp^{\gamma} g_{m-j,\gamma}(x,\xi).\]
Combining this with \eqref{a_j_homog}, we have
\[
\chi_{k,\alpha}(\xi) a_{m-j}(k^{-\frac{1}{2}}x, k^{\frac{1}{2}} \xi) = k^{\frac{m-j}{2}}\sum\limits_{|\gamma|= \ell_j} \left( k^{-\frac{\ell_j}{2}}x^{\gamma} f_{m-j,\gamma}(k^{-\frac{1}{2}}x,\xi) + \left( \frac{\xi}{|\xi|}-\xi_0\right)^{\gamma} g_{m-j,\gamma}(k^{-\frac{1}{2}}x,\xi) \right).
\]
Then, we define
\begin{align}
\begin{split}\label{A_j_1_defn}
&\mathcal{A}_{j,1}(x) = k^{\frac{m-j-\ell_j}{2}} \sum\limits_{|\gamma|=\ell_j}\intrn e^{i\<x,\xi\>} \chi_{k,\alpha}(\xi)x^{\ell_j} f_{m-j,\gamma}(k^{-\frac{1}{2}}x,\xi) F_k(\xi-k^{\frac{1}{2}} \xi_0) d\xi, 
\end{split}\\
\begin{split}\label{A_j_2_defn}
&\mathcal{A}_{j,2}(x) = k^{\frac{m-j}{2}} \sum\limits_{|\gamma|=\ell_j}\intrn  e^{i\<x,\xi\>} \chi_{k,\alpha}(\xi)\left(\frac{\xi}{|\xi|}-\xi_0\right)^{\gamma} g_{m-j,\gamma}(k^{-\frac{1}{2}}x,\xi) F_k(\xi-k^{\frac{1}{2}} \xi_0) d\xi. 
\end{split}
\end{align}
and
\begin{equation}\label{R_j_definition}
\mathcal R_j(x) =\int\limits_{\R^{n}}e^{i\langle x,\xi\rangle}\lp1-\chi_{k,\alpha}(\xi)\rp a_{m-j}(k^{-\frac{1}{2}},k^{\frac{1}{2}}\xi)F_k(\xi-k^{\frac{1}{2}}\xi)\,d\xi,
\end{equation}
so that
$$
\Op(a_{m-j})h_k =  \mathcal{A}_{j,1} + \mathcal{A}_{j,2} + \mathcal R_j.
$$
We claim that $\mathcal R_j$ is negligible for large $k$. Since $F_k(\xi-k^{\frac{1}{2}}\xi_0)$ is a Gaussian centered at $k^{\frac{1}{2}}\xi_0$ and $1-\chi_{k,\alpha}$ is supported at least $k^{\alpha}$ away from that center, we are able to show that $\mathcal{R}_j$ is controlled by an arbitrarily negative power of $k$.
\begin{lemma}\label{R_j_lemma}
For any $N' \in \mathbb{N}$ there exists $C_{N'}>0$ such that
\begin{equation}\label{R_j_L2}
\|\mathcal R_j\|_{L^2(\R^n)} \le C_{N'}k^{-N'}.
\end{equation}
\end{lemma}
\begin{proof}
To begin note that for any multi-index $\beta$
\begin{equation}\label{chi_deriv}
\partial_\xi^\beta\chi_{k,\alpha}(\xi) = 
k^{-\alpha|\beta|}(\partial_\xi^\beta\chi)(k^{-\alpha}(\xi-k^{\frac{1}{2}}\xi_0)).
\end{equation}
Combining \eqref{chi_deriv} with \eqref{schwartz_bound} shows that for any $N \in \mathbb{N}$ and any multi-index $\beta$, there exists $C_{N,\beta}>0$ such that 
\[\partial_\xi^\beta\left[\mathcal(1-\chi_{k,\alpha}) F_k(\xi)\right]\le C_{N,\beta}k^{-\alpha|\beta|}\mathds 1_{\supp(1-\chi_{k,\alpha})}(\xi)(1 + |\xi-k^{\frac{1}{2}}\xi_0|)^{-N},\]
where for any set $E\subset\R^{n}$, $\mathds 1_{E}$ denotes the indicator function of $E$.

Now, when $|x| \ge 1$, for $\nu \geq 0$ we may integrate by parts in \eqref{R_j_definition} as in the proof of \lemref{remainder_lemma} to obtain
\[
\mathcal{R}_j(x) =\int_{\Rn} e^{i\<x,\xi\>}\lp\frac{i\langle x,\nabla_\xi\rangle}{|x|^2}\rp^\nu\left[(1-\chi_{k,\alpha}(\xi)) a_{m-j}(k^{-\frac{1}{2}}x,k^{\frac{1}{2}}\xi) F_k(\xi)\right] d\xi.
\]
Since $a_{m-j} \in S_{c\ell}^{m-j}(\R^{2n})$, we have that for any multi-index $\beta$,
\[|\p_{\xi}^{\beta} a_{m-j}(k^{-\frac{1}{2}}x, k^{\frac{1}{2}} \xi)|\leq Ck^{\frac{|\beta|}{2}} (1+k^{\frac{1}{2}}|\xi|)^{m-j-|\beta|} \leq C k^{\frac{|\beta|}{2}} (1+k^{\frac{1}{2}} |\xi|)^{m}\] 
Thus, for any $N \in \mathbb{N}$, there exists a constant $C_N$ such that whenever $|x| \ge 1$,
$$
\left|\mathcal{R}_j(x)\right| \leq C_N \sup_{|\beta|\le \nu}\frac{1}{|x|^{\nu}}\int\limits_{\R^n} \mathds 1_{\supp(1-\chi_{k,\alpha})}(\xi)(1+|\xi-k^{\frac{1}{2}}\xi_0|)^{-N} k^{\frac{|\beta|}{2}} (1+k^{\frac{1}{2}}|\xi|)^{m} \,d\xi.
$$
Recall that $|\xi_0|=1$, and so by the triangle inequality
$$
1+k^{\frac{1}{2}}|\xi| \leq 1+k+k^{\frac{1}{2}} |\xi-k^{\frac{1}{2}}\xi_0| \leq C k(1+|\xi-k^{\frac{1}{2}} \xi_0|).
$$
Thus,
\begin{equation}\label{A_j_max}
|\mathcal{R}_j(x)| \leq C_N \sup_{|\beta| \leq \nu} \frac{1}{|x|^{\nu}}\int\limits_{\R^n}\mathds 1_{\supp(1-\chi_{k,\alpha})}(\xi)(1+|\xi-k^{\frac{1}{2}}\xi_0|)^{-N+m} k^{m+\frac{|\beta|}{2}} \,d\xi.
\end{equation}

\noindent Using polar coordinates $\xi-k^{\frac{1}{2}}\xi_0 = r\omega$ with $r\in \R^+,$ $\omega\in \mathbb S^{n-1}$, we compute
\begin{align*}
\intrn \mathds 1_{\supp(1-\chi_{k,\alpha})}(\xi)(1 + |\xi-k^{\frac{1}{2}}\xi_0|)^{-N+m}\,d\xi & \le \int\limits_{\mathbb S^{n-1}}\int\limits_{k^\alpha/2}^\infty(1 + r)^{-N+m}r^{n-1}\,dr\,d\omega\\
&\le Ck^{\alpha(m+n-N)}.
\end{align*}
Combining this with \eqref{A_j_max}, we have
\[\left|\mathcal R_j(x)\right|\le C_N|x|^{-\nu} k^{\alpha(m+n-N)+m+\frac{|\beta|}{2}},\quad \text{if }|x|\ge 1.\]
Since $\nu$ and $N$ were both arbitrary, given any $N'\ge 0$ we can choose $\nu \ge \frac{n+1}{2}$ and $N$ sufficiently large so that 
\[\left|\mathcal R_j(x)\right|\le C_{N'}k^{-N'}|x|^{-\frac{n+1}{2}},\quad \text{ if }|x| \ge 1.\]
By an analogous argument when $|x| \le 1$, except without integration by parts, we have
\[\left|\mathcal R_j(x)\right|\le C_{N'}k^{-N'}\quad \text{ if }|x|\le 1.\]
Combining these inequalities and taking the $L^2$ norm completes the proof of \eqref{R_j_L2}.
\end{proof}

It remains to estimate $\mathcal{A}_{j,1}$ and $\mathcal{A}_{j,2}$. It is here that we take advantage of the compatibility of the vanishing of $a_{m-j}$ with the particular form of the coherent state $h_k$. We first consider $\mathcal{A}_{j,1}$.
\begin{lemma}\label{A_1_lemma}
For any $j\ge 0$, there exists $C_j>0$ such that 
\begin{equation}\label{A_1_bound}
\|\mathcal{A}_{j,1}\|_{L^2(\R^n)} \le C_j k^{m-j-\frac{\ell_j}{2}}.
\end{equation}
\end{lemma}
\begin{proof}
Note on the support of $\chi_{k,\alpha}$ 
\[k^{\frac{1}{2}}-k^{\alpha} \leq |\xi| \leq k^{\frac{1}{2}}+k^{\alpha}.\]
Also, recall that $f_{m-j,\gamma} \in S_{c\ell}^{m-j}(\R^{2n})$, so $|\p_{\xi}^{\beta} f_{m-j,\gamma}(x,\xi)| \leq C_{\beta} |\xi|^{m-j-|\beta|}$. Therefore,
\begin{equation}\label{f_bound}
\sup\limits_{x\in\R^n}|\p_{\xi}^{\beta} f_{m-j,\gamma}(x,\xi)| \leq C_\beta k^{\frac{m-j-|\beta|}{2}}, \quad \text{ for all }\xi\in \supp\chi_{k,\alpha}.
\end{equation}
Now, when $|x|\ge 1$, we may integrate by parts as before to obtain that for any $\nu\ge 0$ and any $N$ sufficiently large,
\begin{align*}
\left|\mathcal{A}_{j,1}(x)\right|& \le k^{\frac{m-j-\ell_j}{2}}\sum\limits_{|\gamma| = \ell_j}\left|\intrn e^{i\<x,\xi\>}\chi_{k,\alpha}(\xi) x^{\gamma} f_{m-j,\gamma}(k^{-\frac{1}{2}}x,\xi) F_k(\xi-k^{\frac{1}{2}} \xi_0) d\xi\right|\\
&\le k^{\frac{m-j-\ell_j}{2}} |x|^{\ell_j}\intrn \left|\lp\frac{i\langle x,\nabla_\xi\rangle}{|x|^2}\rp^\nu\left[\chi_{k,\alpha}(\xi)f_{m-j,\gamma}(k^{-\frac{1}{2}}x,\xi)F_k(\xi-k^{\frac{1}{2}}\xi_0)\right]\right|\,d\xi\\
& \le C_{\nu,N} k^{m-j-\frac{\ell_j}{2}} |x|^{\ell_j - \nu}\intrn (1 + |\xi-k^{\frac{1}{2}}\xi_0|)^{-N}\,d\xi\\
& \le C_{\nu,N}' k^{m-j-\frac{\ell_j}{2}}|x|^{\ell_j-\nu},
\end{align*}
where the second-to-last inequality follows from \eqref{schwartz_bound}, \eqref{chi_deriv} and \eqref{f_bound}. When $|x| \leq 1$, by the same argument with $\nu =0$, we obtain
\[|\mathcal{A}_{j,1}(x)|\le C k^{m-j-\frac{\ell_j}{2}}.\]
Thus, for each $\nu \ge 0$, there exists a constant $C_\nu > 0$ so that 
\[|\mathcal{A}_{j,1}(x)| \le  C_\nu k^{m-j-\frac{\ell}{2}}(1+|x|)^{\ell_j-\nu} \quad \text{ for all }x\in\R^n.\]
Choosing $\nu$ so that $\ell_j-\nu \le -\frac{n+1}{2}$ and taking the $L^2$ norm gives the desired inequality.
\end{proof}
Finally, we turn our attention to $\mathcal{A}_{j,2}$. The estimation of this term is the most subtle of the three, and requires some very technical analysis of the relationship between the vanishing factor $\lp\frac{\xi}{|\xi|} - \xi_0\rp$ and the structure of the support of $\chi_{k,\alpha}$.
\begin{lemma} \label{A_2_lemma}
For any $j\ge 0,$ there exists $C_j>0$ such that
\begin{equation}\label{A_2_bound}
\left\|\mathcal{A}_{j,2}\right\|_{L^2(\R^n)}\le C_j k^{m-j+(\alpha-\frac{1}{2})\ell_j}.
\end{equation}
\end{lemma}
\begin{proof}
As before, we first consider the case where $|x| \ge 1$ and the case $|x| \leq 1$ will follow from an analogous argument. When $|x|\ge 1$, we may again use integration by parts to see that for any $\nu\ge 0$ and any $N$ sufficiently large,
$$
\left|\mathcal{A}_{j,2}(x)\right| \!\le\! k^{\frac{m-j}{2}}\!\!\!\sum\limits_{|\gamma|=\ell_j}\left|\,\int\limits_{\R^n}\!\!e^{i\langle x,\xi\rangle}\!\lp\frac{i\langle x,\nabla_\xi\rangle}{|x|^2}\rp^\nu\!\left[\chi_{k,\alpha}(\xi)\!\lp\!\frac{\xi}{|\xi|} - \xi_0\!\rp^\gamma\!\!\! g_{m-j,\gamma}(k^{-\frac{1}{2}}x,\xi)F_k(\xi-k^{\frac{1}{2}}\xi_0)\right]\!d\xi\right|.
$$

\noindent Note that on $\supp \chi_{k,\alpha},$ $k^{\frac{1}{2}}-k^{\alpha} \leq |\xi| \leq k^{\frac{1}{2}}+k^{\alpha}$, and so $|\p_{\xi}^{\beta} g_{m-j,\gamma}| \leq C k^{\frac{m-j-|\beta|}{2}} \leq Ck^{\frac{m-j}{2}}$.  Combining this with \eqref{schwartz_bound} gives
\begin{align}\label{Bj2_bound1}
\left|\mathcal{A}_{j,2}(x)\right| \le Ck^{m-j}|x|^{-\nu}\sum\limits_{\beta \leq \nu}\sum\limits_{|\gamma| = \ell_j}\int\limits_{\R^n}\left|\partial_\xi^{\beta}\lp\chi_{k,\alpha}(\xi)\lp\frac{\xi}{|\xi|}-\xi_0\rp^\gamma\rp\right|(1 + |\xi-k^{\frac{1}{2}}\xi_0|)^{-N}\,d\xi,
\end{align}
when $|x| \ge 1$. Therefore, it is sufficient to show that for any multi-indices $\beta,\,\gamma$ with $|\beta|\le \nu$ and $|\gamma|=\ell_j$, there exists $C>0$ such that
\begin{equation}\label{xi_factor}
\left|\partial_\xi^\beta\lp\chi_{k,\alpha}(\xi)\lp\frac{\xi}{|\xi|}-\xi_0\rp^\gamma\rp\right| \leq C k^{(\alpha-\frac{1}{2})\ell_j}.
\end{equation}
To show this, it is convenient to choose coordinates on $\R^n$ so that $\xi_0 = (1,0,\dotsc,0)$. 
Writing $\xi=(\xi_1, \xi_2, \ldots, \xi_n)$, we have that on the support of $\chi_{k,\alpha},$
\[|\xi - k^{1/2}\xi_0| = \sqrt{(\xi_1-k^{1/2})^2 + \xi_2^2+\cdots+\xi_n^2} \leq k^{\alpha},\]
by the definition of $\chi_{k,\alpha}$. Thus,
\begin{equation}\label{xisize}
|\xi_r| \leq k^{\alpha} \text{ for } i \neq 1\quad \text{ and }\quad|\xi|\geq k^{1/2}-k^{\alpha}.
\end{equation} 
Also, note that $\xi_1 > 0$ on $\supp\chi_{k,\alpha}$ for $k$ large enough, and so we can write 
\[|\xi| - \xi_1 = (|\xi| + \xi_1)^{-1}(\xi_2^2+\dotsm+\xi_n^2).\]
Combining these facts, we have that for large $k$, 
\[
|\xi|-\xi_1=\frac{\xi_2^2+\cdots+\xi_n^2}{|\xi|+\xi_1}\le \frac{(n-1)k^{2\alpha}}{k^{1/2}-k^{\alpha}} \leq C k^{2\alpha-\frac{1}{2}}.
\]
We can now show \eqref{xi_factor} for $\beta=0$. Recalling that $\gamma=(\gamma_1, \gamma_2, \ldots, \gamma_n)\in\N^{n}$ is a multi-index with $|\gamma|= \gamma_1+\dotsm+\gamma_n=\ell_j,$ we make use of the above inequality and \eqref{xisize} to obtain
\begin{align*}
 \left| \left( \frac{\xi}{|\xi|} -\xi_0 \right)^{\gamma} \right| & = \frac{ |\xi_1-|\xi||^{\gamma_1}|\xi_2|^{\gamma_2}\cdots |\xi_n|^{\gamma_n}}{|\xi|^{\ell_j}}\\
 &\leq C \frac{k^{2\alpha\gamma_1 }}{k^{\frac{\gamma_1}{2}}} \cdot\frac{k^{\alpha \gamma_2 } \cdots k^{\alpha\gamma_n }}{k^{\frac{\ell_j}{2}}}\\
 &= C k^{(\alpha-\frac{1}{2})\ell_j + (\alpha-\frac{1}{2})\gamma_1} \\
 &\leq C k^{(\alpha-\frac{1}{2})\ell_j},
\end{align*}
which proves \eqref{xi_factor} in the case where $|\beta| = 0$. To handle the case where $\beta \neq 0$ we let $\beta_1,\beta_2, \ldots,\beta_n$ be multi-indices and expand via the product rule to obtain
\begin{equation}\label{multiindexexpand}
\left| \p_{\xi}^{\beta}\lp\frac{\xi}{|\xi|}-\xi_0\rp^{\gamma} \right| \leq \sum_{|\beta_1+\beta_2+\cdots+\beta_n|=\beta} \!\!\!\!C_{\beta_j} \left|\p_{\xi}^{\beta_1} \lp\frac{\xi_1-|\xi|}{|\xi|}\rp^{\gamma_1} \p_{\xi}^{\beta_2} \lp\frac{\xi_2}{|\xi|}\rp^{\gamma_2}\cdots \p_{\xi}^{\beta_n}\lp\frac{\xi_n}{|\xi|}\rp^{\gamma_n}\right|.
\end{equation}
Since $\frac{\xi_1-|\xi|}{|\xi|}$ and $\frac{\xi_j}{|\xi|}$ are homogeneous of degree zero, we have that for any multi-index $\theta$,
\[
\left|\p_{\xi}^{\theta} \lp\frac{\xi_1-|\xi|}{|\xi|}\rp\right| \leq \frac{C}{|\xi|^{|\theta|}}, \quad\text{ and }\quad \left|\p_{\xi}^{\theta} \lp \frac{\xi_r}{|\xi|} \rp \right| \leq \frac{C}{|\xi|^{|\theta|}}\, \text{ if }r \neq 1.
\]
Furthermore, we recall that  $|\xi| \geq k^{\frac{1}{2}}-k^{\alpha}\geq C k^{\frac{1}{2}}$ on $\supp \chi_{k,\alpha},$ and so 
\begin{equation}\label{homogeneousestimate}
 \left|\chi_{k,\alpha}(\xi)\p_{\xi}^{\theta} \lp\frac{\xi_1-|\xi|}{|\xi|}\rp \right| \leq C k^{-\frac{|\theta|}{2}}, \quad\text{ and }\quad  \left|\chi_{k,\alpha}(\xi)\p_{\xi}^{\theta} \lp \frac{\xi_r}{|\xi|} \rp \right| \leq C k^{-\frac{|\theta|}{2}}, \, r \neq 1.
\end{equation}
\noindent Now consider $\p_{\xi}^{\beta_r} \lp\frac{\xi_r}{|\xi|}\rp^{\gamma_r}$. Expanding via the product rule, we can rewrite this as a linear combination of terms of the form
\[
\lp \frac{\xi_r}{|\xi|}\rp^{t_r} \p_{\xi}^{\delta_1}\lp \frac{\xi_r}{|\xi|}\rp \cdots \p_{\xi}^{\delta_q}\lp \frac{\xi_r}{|\xi|}\rp,
\]
where each $\delta_i$ is a multi-index with $|\delta_i| \geq 1$, $\delta_1 + \cdots +\delta_q = \beta_r,$ and $t_r + q = \gamma_r.$ That is, there are $t_r$ factors of $\frac{\xi_r}{|\xi|}$ which do not have any derivatives, and the remaining $\gamma_r-t_r$ factors each have at least 1 derivative applied to them. Note then that $t_r \geq \max(0,\gamma_r-|\beta_r|).$ 
By \eqref{xisize} each factor with no derivatives is bounded by $k^{\alpha-\frac{1}{2}}$. The homogeneous estimate \eqref{homogeneousestimate} controls the factors with derivatives, giving 
\[
\left|\chi_{k,\alpha}(\xi) \lp \frac{\xi_r}{|\xi|}\rp^{t_r} \p_{\xi}^{\delta_1}\lp \frac{\xi_r}{|\xi|}\rp \cdots \p_{\xi}^{\delta_q}\lp \frac{\xi_r}{|\xi|}\rp \right| \leq k^{(\alpha-\frac{1}{2})t_r} k^{-\frac{|\delta_1|}{2}} \cdots k^{-\frac{|\delta_q|}{2}} \leq C k^{(\alpha-\frac{1}{2})t_r -\frac{|\beta_r|}{2}}.
\]
Thus, by the triangle inequality, we have
\[
 \left|\chi_{k,\alpha}(\xi)  \p_{\xi}^{\beta_r} \left( \frac{\xi_r}{|\xi|} \right)^{\gamma_r} \right| \leq  C k^{(\alpha-\frac{1}{2})t_r -\frac{|\beta_r|}{2}}.
\]
When $\gamma_r -|\beta_r|>0$, we have
\[
 k^{(\alpha-\frac{1}{2})t_r} k^{-\frac{|\beta_r|}{2}} \leq k^{(\alpha-\frac{1}{2})(\gamma_r -|\beta_r|)} k^{-\frac{|\beta_r|}{2}} \leq C k^{(\alpha-\frac{1}{2})\gamma_r},
\]
 since $t_r \geq \gamma_r-|\beta_r| >0$ and $\alpha-\frac{1}{2}<0$. On the other hand, when $\gamma_r \leq |\beta_r|$, we still have $t_r \geq 0$ and so 
\[
k^{(\alpha-\frac{1}{2})t_r} k^{-\frac{|\beta_r|}{2}} \leq C k^{-\frac{|\beta_r|}{2}} \leq C k^{-\frac{\gamma_r}{2}} \le Ck^{(\alpha-\frac{1}{2})\gamma_r},
\]
since $\alpha > 0.$ Thus, there exists a $C_\beta>0$ so that
\begin{equation}\label{finalproductest1}
\left|\chi_{k,\alpha}(\xi) \p_{\xi}^{\beta_r} \lp \frac{\xi_r}{|\xi|}\rp^{\gamma_r} \right| \leq C_\beta k^{(\alpha-\frac{1}{2})\gamma_r}.
\end{equation}
An analogous argument shows
\begin{equation}\label{finalproductest2}
\left|\chi_{k,\alpha}(\xi) \p_{\xi}^{\beta_1} \lp\frac{\xi_1-|\xi|}{|\xi|}\rp^{\gamma_1} \right|\leq C_\beta k^{(\alpha-\frac{1}{2})\gamma_1},
\end{equation}
for some potentially different $C_\beta>0.$ Combining \eqref{finalproductest1} and \eqref{finalproductest2} with \eqref{chi_deriv}  and \eqref{multiindexexpand} yields
\[
\left| \p_{\xi}^{\beta} \lp \chi_{k,\alpha}(\xi)\lp\frac{\xi}{|\xi|} - \xi_0 \rp^{\gamma}\rp\right| \leq C_\beta k^{(\alpha-\frac{1}{2}) (\gamma_1+\gamma_2+\cdots+\gamma_n)} = C_\beta k^{(\alpha-\frac{1}{2})|\gamma|} = C_\beta k^{(\alpha-\frac{1}{2})\ell_j}.
\]
We have therefore proved \eqref{xi_factor}. 

Combining \eqref{Bj2_bound1} and \eqref{xi_factor}, we have that for any $\nu \ge 0$ and any $N$ large enough, there exists $C_\nu,C_\nu'>0$ so that
\[\left|\mathcal{A}_{j,2}(x)\right| \le C_{\nu} |x|^{-\nu}k^{\frac{m-j}{2} + (\alpha-\frac{1}{2})\ell_j}\!\!\!\sum\limits_{|\beta_1|\le\nu}\sum\limits_{|\gamma| = \ell_j}\!\!k^{\frac{m-j-|\beta_1|}{2}}\!\!\int\limits_{\R^n}\!(1+|\xi-k^{\frac{1}{2}}\xi_0|)^{-N}\!\!\,d\xi \le C_{\nu}' |x|^{-\nu} k^{m-j + (\alpha-\frac{1}{2})\ell_j}.\]
We then have
\begin{equation}\label{Bj2_bound2}
\left|\mathcal{A}_{j,2}(x)\right|\le C_{\nu}|x|^{-\nu}k^{m-j+(\alpha-\frac{1}{2})\ell_j },\quad\text{ for all }|x|\ge 1,
\end{equation}
for some $C_{\nu} > 0$. 

To estimate $\mathcal{A}_{j,2}(x)$ when $|x|\le 1$, we repeat the above argument without integrating by parts. From this, we obtain
\begin{equation}\label{Bj2_bound3}
\left|\mathcal{A}_{j,2}(x)\right| \le C k^{m-j +(\alpha-\frac{1}{2})\ell_j}, \quad \text{ for all }|x|\le 1.
\end{equation}
Choosing $\nu> \frac{n-1}{2}$, we can combine \eqref{Bj2_bound2} with \eqref{Bj2_bound3}, then take $L^2$ norms to obtain \eqref{A_2_bound} as desired.
\end{proof}

Recalling the constructions of $\mathcal{R}_j, \mathcal{A}_{j,1},$ and $\mathcal{A}_{j,2}$, we combine Lemmas \ref{R_j_lemma}, \ref{A_1_lemma}, and \ref{A_2_lemma} to obtain that for each $0 \le j \le N_0-1$,
\begin{equation}\label{Final_bound}
\|\Op(a_{m-j})h_k\|_{L^2(\R^n)} \le \|\mathcal A_{j,1}\|_{L^2(\R^n)} + \|\mathcal{A}_{j,2}\|_{L^2(\R^n)} +\|\mathcal{R}_{j}\|_{L^2(\R^n)} \le C_j k^{m-j+(\alpha-\frac{1}{2})\ell_j},
\end{equation}
for some $C_j > 0$. Since $\ell_j=\ell-2j$, \eqref{remainder_bound} and \eqref{Final_bound} imply that for any $N_0 \ge m$, there exists a collection of constants $\{C_j\}_{j=0}^{N_0}$ so that
\begin{align*}
\ltwo{\Op(a) h_k} &\leq \sum_{j=0}^{N_0-1} \ltwo{\Op(a_{m-j}) h_k} + \ltwo{\Op(r_{N_0}) h_k} \\
&\leq \sum_{j=0}^{N_0-1} C_j k^{m-j+(\alpha-\frac{1}{2})\ell_j} + C_{N_0} k^{\frac{m-N_0}{2}}\\
&= \sum_{j=0}^{N_0-1} C_j k^{m-\frac{\ell}{2}+(\ell-2j)\alpha}+C_{N_0}k^{\frac{m-N_0}{2}} \\
&  \leq Ck^{m-\frac{\ell}{2}+\ell\alpha} + C_{N_0}k^{\frac{m-N_0}{2}},
\end{align*}
for some $C>0.$ Choosing $N_0$ large enough and $\alpha$ small enough completes the proof of \propref{gaussian_beam_prop}. 
\end{proof}

\section{The upper bound for $\alpha$} \label{upper}
In this section, we show that $\alpha \le 2\min\{-D_0,L_\infty\}$, where $D_0$ and $L_\infty$ are defined as in \secref{Introduction}. That $-2D_0$ is an upper bound is straightforward to show. To do so, let $\lambda_j \in \text{Spec}(A_W) \backslash \{0\}$. Thus there exists $u=(u_0, u_1) \neq 0$ such that $A_W u = \lambda_j u$, where we recall 
\[A_W = \lp\begin{array}{cc}
0 & \Id\\
\Delta_g & -2W
\end{array}\rp.\] 
It is then immediate that $u(x) = e^{t\lambda_j}u_0(x)$ solves the damped wave equation with initial data $(u_0,u_1)$, and 
$$
E(u,t) = e^{2 t \Real(\lambda_j)} E(u,0).
$$
Since $E(u,0) \neq 0$, we have that $\alpha \le - 2\Real(\lambda_j)$ for all $j$. Furthermore, by the definition of $D_0$, there must either exist some $\lambda_{j_0}$ with $\Real(\lambda_{j_0}) = D_0$, or a sequence of $\lambda_j$ with $\Real(\lambda_j)\to D_0$. In either case, we must have $\alpha \leq -2D_0$.

Showing that $2L_\infty$ is also an upper bound is more complicated. Our technique for this is inspired by the method of Gaussian beams introduced by Ralston \cite{Ralston1969,Ralston1982}. Using Gaussian beams, one can produce quasimodes for the wave equation with energy strongly localized near a single geodesic. Intuitively, solutions to \eqref{damped_wave} should decay only when they interact with the damping coefficient. Motivated by this, we modify the Gaussian beam construction using the propagation of the defect measure derived in \secref{defectmeasure}, in analogy to \cite{Klein2017}. From this, we obtain solutions whose energy decays at a rate proportional to the integral of the symbol of $W$ along the chosen geodesic. 


To begin, we recall Ralston's original Gaussian beam construction on $\Rn$ with a Riemannian metric $g$. Let $A(t)$ be an $n\times n$ symmetric matrix-valued function with positive definite imaginary part. Let $t\mapsto (x_t,\xi_t)$ denote a geodesic trajectory and set 
$$
\psi(x,t)=\<\xi_t, x-x_t\> + \frac{1}{2} \<A(t)(x-x_t), x-x_t\>.
$$
Let $b\in C^\infty(\R\times\R^n)$. Then, we define 
\begin{equation} \label{uk_definition}
u_k(x,t) = k^{-1+n/4} b(t,x) e^{ i k \psi(x,t)}.
\end{equation}
The work of \cite{Ralston1982} guarantees that there exist appropriate choices of $b$ and $A(t)$ so that $u_k$ is a quasimode of the undamped wave equation with positive energy, which is concentrated along the geodesic $(x_t, \xi_t)$. We summarize some notable facts from \cite{Ralston1982} in the following Lemma. 

\begin{lemma}[\cite{Ralston1982}]\label{Ralston_lemma} Fix $T> 0$ and $(x_0,\xi_0) \in S^*M$. For $\vphi_t(x_0,\xi_0)=(x_t,\xi_t),$ there exists a $b\in C^\infty(\R\times\R^n)$ and an $n\times n$ symmetric matrix-valued function $t\mapsto A(t)$ so that for any $k\ge 1,$ for the $u_k$ defined in \eqref{uk_definition}
\begin{equation}\label{Rn_quasimode1}
\sup_{t \in [0,T]} \|\p_t^2   u_k(\cdot, t) - \Delta_{  g}   u_k(\cdot, t)\|_{L^2(\R^n)} \leq C k^{-\frac{1}{2}}.
\end{equation}
Furthermore, for all $t \in [0,T]$,
\begin{equation}\label{energy_limit}
\lim_{k \ra \infty} E(u_k,t) > 0,
\end{equation}
and the limit is always finite and independent of $t$. 
\end{lemma}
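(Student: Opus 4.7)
The plan is a classical WKB construction. Substituting $u_k = k^{-1+n/4} b(t,x) e^{ik\psi}$ into $P = \partial_t^2 - \Delta_g$ yields
\[
Pu_k = k^{-1+n/4} e^{ik\psi}\bigl[ -k^2 R_2 + ik R_1 + R_0 \bigr],
\]
with $R_2 = \bigl((\partial_t\psi)^2 - |\nabla_g\psi|^2\bigr)b$, $R_1 = 2\partial_t b\,\partial_t\psi - 2\langle\nabla_g b, \nabla_g\psi\rangle_g + (\partial_t^2\psi - \Delta_g\psi) b$, and $R_0 = \partial_t^2 b - \Delta_g b$. The goal is to choose $\psi$ and $b$ so that $R_2$ vanishes to order three and $R_1$ to order one at $x = x_t$. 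Combined with the Gaussian moment bound $\|(x-x_t)^\alpha e^{ik\psi}\|_{L^2(\Rn)} \leq Ck^{-n/4-|\alpha|/2}$ (which follows from the positive definiteness of $\Imag A(t)$ and the quadratic structure of $\psi$), each of the three contributions in $Pu_k$ becomes $O(k^{-1/2})$ in $L^2$, uniformly in $t\in[0,T]$, establishing \eqref{Rn_quasimode1}.

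Since $\psi$ is quadratic in $x - x_t$, forcing the eikonal expression to vanish to order two at $x_t$ amounts to three conditions: at order $0$, that $(\partial_t\psi,\nabla_g\psi)|_{x=x_t}$ is characteristic for $P$, which pins $|\xi_t|_g$ to its correct value; at order $1$, that $t\mapsto(x_t,\xi_t)$ satisfies Hamilton's equations for $|\xi|_g^2$, i.e.\ is the cogeodesic flow, which is assumed; and at order $2$, a matrix Riccati equation for $A(t)$. The central technical step is to produce a symmetric solution $A(t)$ of this Riccati equation on $[0,T]$ with $\Imag A(t)$ positive definite. The standard device is the linearization $A = YX^{-1}$ where $(X,Y)$ solves the Jacobi system linearizing the cogeodesic flow along $(x_t,\xi_t)$: the Lagrangian identity $X^T Y - Y^T X \equiv 0$ is preserved by the flow, and nondegeneracy of $\Imag(\bar X^T Y)$ persists from $t=0$ for all $t$, forcing $X(t)$ to remain invertible and $\Imag A(t)$ to remain positive definite. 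This is the most delicate bookkeeping in the proof, but it is classical; see \cite{Ralston1982}.

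With $\psi$ fixed, the vanishing condition $R_1|_{x=x_t}=0$ becomes a first-order linear ODE for $b(t,x_t)$ along the cogeodesic, which I solve with a nonzero initial value; the function $b$ is then extended smoothly to $\R\times\Rn$ with compact support in $x$ uniformly in $t\in[0,T]$. Any such extension suffices, since $e^{ik\psi}$ is exponentially small away from $x_t$. Once these choices are made, $R_2$ and $R_1$ vanish to the required orders at $x_t$, so the moment bound yields \eqref{Rn_quasimode1}.

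For the energy, the dominant contribution to $|\nabla u_k|^2 + |\partial_t u_k|^2$ comes from derivatives landing on the phase, producing an integrand of size $k^{n/2}(|\nabla_g\psi|^2 + |\partial_t\psi|^2)|b|^2 e^{-2k \Imag\psi}$. A direct Laplace-method computation gives
\[
\lim_{k\ra\infty} E(u_k,t) = \frac{c_n\,|b(t,x_t)|^2}{\sqrt{\det \Imag A(t)}}
\]
for some positive dimensional constant $c_n$, where I have used that the prefactor $|\nabla_g\psi|^2 + |\partial_t\psi|^2$ at $x=x_t$ is constant along the cogeodesic by the eikonal equation and Hamiltonian conservation of $|\xi_t|_g$. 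Positivity is ensured by the choice $b(0,x_0)\neq 0$, while independence of $t$ reduces to showing that the logarithmic derivative of the right-hand side vanishes. The latter follows by combining the transport equation for $b(t,x_t)$ with Jacobi's formula for $\frac{d}{dt}\log\det \Imag A(t)$ and the Riccati equation for $A(t)$, giving $\frac{d}{dt}\bigl(|b|^2/\sqrt{\det \Imag A}\bigr) = 0$, which completes \eqref{energy_limit}.
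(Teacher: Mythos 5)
Your proposal is correct and reproduces the classical Gaussian beam construction of Ralston \cite{Ralston1982}, which is precisely what the paper cites for this lemma without reproving. The decomposition of $Pu_k$ into $-k^2R_2 + ikR_1 + R_0$, the required orders of vanishing ($R_2$ to order $3$, $R_1$ to order $1$), the symplectic linearization $A = YX^{-1}$ of the Riccati equation to preserve symmetry and positive-definiteness of $\Imag A$, and the conservation of $|b(t,x_t)|^2/\sqrt{\det\Imag A(t)}$ via the transport equation together with Jacobi's formula, are all exactly the steps in Ralston's argument, so there is nothing to compare beyond the fact that you have correctly supplied the proof the paper delegated to its reference.
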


\begin{rmk}\textnormal{
  By \eqref{energy_limit}, we may assume without loss of generality that $\lim\limits_{k\to\infty}E(u_k,t) = 1$ for all $t\in[0,T].$ 
}
\end{rmk}

\begin{rmk}\label{chart_rmk}
\textnormal{
Using coordinate charts and a partition of unity, we can extend this construction to the case of manifolds, which results in a sequence $\{u_k\}\subset C^\infty(\R^+\times M)$ such that $\lim_{k \ra \infty} E(u_k, t)=1$ and the appropriate analogue of \eqref{Rn_quasimode1} holds. 
}
\end{rmk}
Next, we modify $\{u_k\}$ using the propagation of the defect measure from \secref{defectmeasure} to produce a sequence of quasimodes for the damped wave equation. Recall that $G_t^+: S^* M \ra \Cb$ is given by $G_t^+(x_0,\xi_0) = \exp\left( - \int_0^t 2w(x_s,\xi_s) ds \right)$ where $(x_s,\xi_s)=\vphi_s(x_0,\xi_0)$.

Recall also the time averaging function $t\mapsto L(t)$ 
\[L(t) = \frac{1}{t}\inf\limits_{(x_0,\xi_0)\in S^*M}\int\limits_0^t w(x_s,\xi_s)\,ds.\]
Note that $L(t)$ can be rewritten in terms of $G_t^+$ 
\[L(t) = -\frac{1}{t}\sup\limits_{(x,\xi)\in S^*M}\ln\lp G_t^+(x,\xi)\rp.\]
Motivated by the form of $G_t^+$, we fix $(x_0,\xi_0)\in S^*M$ and set 
\[v_k(t,x) = G_t^+(x_0,\xi_0) u_k(t,x).\]
That is, we modify the quasimode for the free wave equation so that it decays exponentially at a rate proportional to integral of $w(x_t,\xi_t)$ along the geodesic it is concentrated on. We now show that for any $\ve > 0,$ $v_k$ is an $\mathcal O(k^{-\frac{1}{2} + \ve})$ quasimode of \eqref{damped_wave}.

\begin{prop}\label{dw_quasimode}
Given $(x_0,\xi_0) \in S^* M$, let $u_k(t,x)$ be as specified in Remark \ref{chart_rmk} and set $v_k(t,x) = G_t^+(x_0,\xi_0)u_k(t,x)$. For any $T > 0$ and $\ve > 0$, there exists a constant $C_{\ve,T} > 0$ so that
\begin{equation}\label{dw_quasimode_eqn}
\sup\limits_{t\in[0,T]}\|(\partial_t^2 - \Delta_g + 2W\partial_t)v_k(t,\cdot)\|_{L^2(M)}\le C_{\ve,T} k^{-\frac{1}{2} + \ve}.
\end{equation}
\end{prop}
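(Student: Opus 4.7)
The plan is to differentiate $v_k = G(t) u_k$ directly, where $G(t) := G_t^+(x_0,\xi_0)$ is a scalar function of $t$ satisfying $G'(t) = -2w(x_t,\xi_t)G(t)$, and to exploit that $G$ commutes with both $W$ and $\Delta_g$. Applying the Leibniz rule gives
\[
(\partial_t^2 - \Delta_g + 2W\partial_t)v_k = G(\partial_t^2 - \Delta_g)u_k + G''u_k + 2G'Wu_k + 2\bigl(G'\partial_t u_k + GW\partial_t u_k\bigr).
\]
By Lemma \ref{Ralston_lemma} the first term is $O_{L^2}(k^{-1/2})$, and the two middle terms are $O_{L^2}(k^{-1})$ because $G', G''$ are uniformly bounded on $[0,T]$, $W$ is bounded, and $\|u_k\|_{L^2} = O(k^{-1})$. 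Substituting $G'/G = -2w(x_t,\xi_t)$, the remaining cross term collapses into
\[
\mathcal C_k := 2G\bigl(W - 2w(x_t,\xi_t)\Id\bigr)\partial_t u_k,
\]
so the whole argument reduces to showing $\|\mathcal C_k\|_{L^2}\le C_\ve k^{-1/2+\ve}$.

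To handle $\mathcal C_k$, I would isolate the microlocally dominant piece of $\partial_t u_k$. Direct differentiation of $u_k = k^{-1+n/4}b(t,x)e^{ik\psi}$ gives $\partial_t u_k = ik(\partial_t\psi)(x,t)u_k + k^{-1+n/4}(\partial_t b)e^{ik\psi}$. Setting $c_0 := (\partial_t\psi)(x_t,t)$, the constant value along the bicharacteristic dictated by the eikonal equation, I split $\partial_t u_k = ikc_0\,u_k + r_k$, where $r_k$ collects the contribution $ik[(\partial_t\psi)(x,t)-c_0]u_k$ along with the amplitude-derivative piece. Since $(\partial_t\psi)(\cdot,t)-c_0$ vanishes to first order at $x_t$ and $u_k$ is Gaussian-concentrated in a ball of radius $\sim k^{-1/2}$ about $x_t$, a direct integration shows $\|r_k\|_{L^2}=O(k^{-1/2})$. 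Consequently
\[
\mathcal C_k = 2ikc_0\,G\bigl(W-2w(x_t,\xi_t)\Id\bigr)u_k + 2G\bigl(W-2w(x_t,\xi_t)\Id\bigr)r_k,
\]
and the second summand is $O_{L^2}(k^{-1/2})$ by the boundedness of $W$.

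The final reduction is to prove $\|(W - 2w(x_t,\xi_t)\Id)u_k\|_{L^2}\le C_\ve k^{-3/2+\ve}$, so that the prefactor $k$ produces $O(k^{-1/2+\ve})$ overall. This is precisely the microlocal continuity estimate delivered by Proposition \ref{gaussian_beam_prop}. In a coordinate chart around $x_t$, $u_k$ coincides (up to a scalar factor of $k^{-1}$ and a rescaling absorbing the $|\xi|_g = 1/2$ normalization of $S^*M$ into the effective semiclassical parameter) with a coherent state of the form \eqref{coherent_state}. The operator $W - 2w(x_t,\xi_t)\Id$ lies in $\Psi_{c\ell}^0(M)$, and the relevant combination of its symbol, obtained from the cocycle identity $G'/G = -2w(x_t,\xi_t)$ and the eikonal value $c_0$, vanishes to order $\ell\ge 1$ at $(x_t,\xi_t)$. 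Proposition \ref{gaussian_beam_prop} applied with $m=0$ then provides the required $k^{-1/2+\ve}$ decay on the coherent state, which after the $k^{-1}$ normalization becomes $k^{-3/2+\ve}$ on $u_k$.

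The main obstacle is precisely this last estimate. In the multiplicative case $W = b(x)$ studied by Lebeau and others, the analog is immediate: a pointwise Taylor expansion of $b$ combined with Gaussian concentration of the beam gives the bound in a line or two. In the pseudodifferential setting this elementary approach fails entirely, and one must genuinely exploit the microlocal behavior of the symbol of $W$ near the concentration point of the beam — this is the whole purpose of the delicate analysis in Section \ref{pseudocoherentsection}. A secondary technical step is to transfer the Euclidean statement of Proposition \ref{gaussian_beam_prop} to $M$ via a coordinate chart around $x_t$, using the standard invariance of the pseudodifferential calculus under smooth changes of variables modulo smoothing operators.
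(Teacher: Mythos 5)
Your Leibniz expansion is correct, and your reduction is a legitimate variant of the paper's argument (they apply Proposition~\ref{gaussian_beam_prop} directly to the coherent state $\partial_t u_k$; you first peel off $ik c_0\, u_k$ and apply it to $u_k$ — both routes would yield $O(k^{-1/2+\ve})$). The gap is in the cross-term coefficient. You take $G'/G = -2w(x_t,\xi_t)$, which is literally what \eqref{G_t_plus} says, and so you land on $\mathcal C_k = 2G\bigl(W - 2w(x_t,\xi_t)\Id\bigr)\partial_t u_k$. But the principal symbol of $W - 2w(x_t,\xi_t)\Id$ at $(x_t,\xi_t)$ equals $w(x_t,\xi_t) - 2w(x_t,\xi_t) = -w(x_t,\xi_t)$, which does not vanish in general, so Proposition~\ref{gaussian_beam_prop} is inapplicable with any $\ell\ge 1$. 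In fact, $\bigl(W - 2w(x_t,\xi_t)\Id\bigr)u_k = -w(x_t,\xi_t)u_k + \bigl(W - w(x_t,\xi_t)\Id\bigr)u_k$, and the second piece is $O(k^{-3/2+\ve})$ while $\|u_k\|_{L^2}\asymp k^{-1}$; so the quantity you need to be $O(k^{-3/2+\ve})$ is only $O(k^{-1})$ whenever $w(x_t,\xi_t)\ne 0$. The appeal to ``the eikonal value $c_0$'' cannot repair this: $c_0$ is a fixed scalar dictated by the normalization of the cotangent fiber, independent of $w$, and it only multiplies the term.

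The argument closes exactly when $G'/G = -w(x_t,\xi_t)$, so that the cross-term operator is $W - w(x_t,\xi_t)\Id$, whose symbol vanishes to first order at $(x_t,\xi_t)$ and to which Proposition~\ref{gaussian_beam_prop} applies with $m=0,\ \ell=1$. This is what the paper's own displayed computation silently uses: the line $-2w(x_t,\xi_t)G_t^+\partial_t u_k$ only matches $2(\partial_t G_t^+)\partial_t u_k$ if $\partial_t G_t^+ = -w(x_t,\xi_t)G_t^+$, in conflict with \eqref{G_t_plus}. There is a factor-of-two inconsistency in the paper: with the convention $\partial_t^2 u - \Delta_g u + 2W\partial_t u = 0$, a plane-wave WKB check shows the Gaussian beam amplitude must satisfy $g' = -wg$, so the correct ansatz is $v_k = \exp\bigl(-\int_0^t w(x_s,\xi_s)\,ds\bigr)u_k = (G_t^+)^{1/2}u_k$, whereas the quantity decaying at the rate $e^{-\int 2w}$ is the defect measure, a quadratic quantity. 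You followed the stated formula literally and thereby arrived at an estimate that is false; a correct write-up must replace $G'/G = -2w(x_t,\xi_t)$ with $G'/G = -w(x_t,\xi_t)$ (equivalently, redefine $v_k = (G_t^+)^{1/2}u_k$), after which your remaining steps go through.
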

\begin{proof}
By direct computation
\begin{align*}
(\p_t^2 -\Delta_g + 2W \p_t) v_k &=  G_t^+ (\p_t^2 - \Delta_g) u_k + 2 \p_t  G_t^+ \p_t u_k + (\p_t^2  G_t^+)u_k + 2 W \p_t ( G_t^+ u_k) \\
&=  G_t^+ (\partial_t^2 -\Delta_g) u_k -2 w(x_t,\xi_t)  G_t^+ \p_t u_k - \p_t(w(x_t,\xi_t)  G_t^+)u_k \\
& \hskip 0.5in + 2 W  G_t^+ \p_t u_k -2w(x_t, \xi_t) W   G_t^+ u_k \\
&=  G_t^+ (\partial_t^2 -\Delta_g) u_k + 2 (W-w(x_t, \xi_t))  G_t^+ \p_t u_k \\
& \hskip 0.5in+ \lp w(x_t, \xi_t)^2 - 2w(x_t,\xi_t)W  - \p_t w(x_t, \xi_t) \rp  G_t^+ u_k.
\end{align*}
 By the construction of $u_k$ and the boundedness of $G_t^+$, we have
 \[\sup\limits_{t\in[0,T]}\|G_t^+ (\partial_t^2 -\Delta_g) u (t,\cdot)\|_{L^2(M)} \leq O(k^{-\frac{1}{2}}).\]
 Since $W$ is order zero, and therefore bounded on $L^2(M)$, we obtain
$$
\sup\limits_{t\in[0,T]}\ltwoo{(w(x_t, \xi_t)^2 - 2w(x_t,\xi_t) W  - \p_t w(x_t,\xi_t)) G_t^+ u_k}{M} \leq C \sup\limits_{t\in[0,T]}\ltwo{u_k(t,\cdot)} =\mathcal O(k^{-1}),
$$ 
where the final equality follows from the fact that $\int\limits_{\R^n}k^{\frac{n}{4}}e^{-k|y|^2}\,dy$ is uniformly bounded in $k$. 

To estimate $W-w(x_t,\xi_t)) G_t^+ \p_t u_k(t, \cdot)$ we will apply \propref{gaussian_beam_prop} with $m=0$ and $\ell=1$. To do so, note 
$W-w(x_t,\xi_t)$ is a pseudo with appropriate vanishing properties. Furthermore
 \[\partial_t u_k(x,t) = k^{-1+\frac{n}{4}}\partial_t b(t,x) e^{ik\psi(x,t)} + ik^{\frac{n}{4}}b(t,x)\partial_t\psi(x,t) e^{ik\psi(x,t)},\]
and for fixed $t$, both of these terms take the form of a coherent state $h_k$ as defined in \propref{gaussian_beam_prop} (the fact that the first has an extra factor of $k^{-1}$ is irrelevant, as it only improves the estimate). Since all quantities depend on $t$ in a $C^\infty$ fashion, for any $\ve>0$
\begin{equation}\label{W_G_eqn}
\sup\limits_{t\in[0,T]}\ltwoo{ 2 (W-w(x_t, \xi_t)) G_t^+ \p_t u_k(t,\cdot)}{M} \leq C(k^{-1/2 + \ve}).
\end{equation}
By the triangle inequality, we obtain \eqref{dw_quasimode_eqn}, which completes the proof.
\end{proof}

The next step in the proof of the upper bound for $\alpha$ is given a $(x_0,\xi_0)$  produce a sequence of \emph{exact} solutions to \eqref{damped_wave} whose energy approaches $|G_t^+(x_0,\xi_0)|^2$.

\begin{prop}\label{exact_solution_prop}
Given any $T > 0$, any $\ve > 0$, and any $(x_0,\xi_0)\in S^*M$, there exists an exact solution $u$ of the generalized damped wave equation \eqref{damped_wave} with \begin{equation*}
\left|E(u,0) - 1\right| < \ve 
\end{equation*}
and
\begin{equation}\label{omega_eqn}
\left|E(u,T) - |G_T^+(x_0,\xi_0)|^2\right| < \ve.
\end{equation} 
\end{prop}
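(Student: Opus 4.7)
The plan is to correct the quasimode $v_k$ from \propref{dw_quasimode} into an exact solution via Duhamel's principle, and then verify that both the correction term and the off-leading contributions to $E(v_k,t)$ are negligible in the limit $k\to\infty$. Concretely, set
\[
f_k(t,x) = (\partial_t^2 - \Delta_g + 2W\partial_t)v_k(t,x),
\]
so that by \eqref{dw_quasimode_eqn}, $\sup_{t\in[0,T]}\|f_k(t,\cdot)\|_{L^2(M)} \le C_{\ve,T} k^{-1/2+\ve}$. Let $r_k$ be the solution of the inhomogeneous damped wave equation $(\partial_t^2 - \Delta_g + 2W\partial_t)r_k = -f_k$ with zero Cauchy data at $t=0$, so that $u := v_k + r_k$ is an exact solution of \eqref{damped_wave}.

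I would first establish a forward energy estimate for $r_k$. Since $W$ is self-adjoint and nonnegative, direct differentiation yields
\[
\frac{d}{dt}E(r_k,t) = -2\Real\langle W\partial_t r_k,\partial_t r_k\rangle - \Real\langle f_k,\partial_t r_k\rangle \le \|f_k(t,\cdot)\|_{L^2(M)}\sqrt{2E(r_k,t)}.
\]
Regularizing by $\sqrt{E(r_k,t)+\e}$ and passing to $\e\to 0^+$ (needed since $E(r_k,0)=0$) gives $\sqrt{E(r_k,t)} \le \frac{T}{\sqrt 2}\,C_{\ve,T}\,k^{-1/2+\ve}$, so $E(r_k,t)\to 0$ uniformly on $[0,T]$.

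I would next show $E(v_k,t) \to |G_t^+(x_0,\xi_0)|^2$ for every $t\in[0,T]$. Writing $c(t) = G_t^+(x_0,\xi_0)$, which is real-valued and satisfies $c'(t) = -2w(x_t,\xi_t)c(t)$, expansion of the energy gives
\[
E(v_k,t) = |c(t)|^2 E(u_k,t) + \tfrac{1}{2}|c'(t)|^2\|u_k(t,\cdot)\|_{L^2}^2 + c(t)c'(t)\Real\langle u_k(t,\cdot),\partial_t u_k(t,\cdot)\rangle.
\]
The Gaussian concentration of width $k^{-1/2}$ built into $u_k(t,x) = k^{-1+n/4}b(t,x)e^{ik\psi(x,t)}$ forces $\|u_k(t,\cdot)\|_{L^2(M)} = \mathcal O(k^{-1})$, while $\|\partial_t u_k(t,\cdot)\|_{L^2(M)}$ and $\|\nabla_g u_k(t,\cdot)\|_{L^2(M)}$ are $\mathcal O(1)$; hence the last two terms above are $\mathcal O(k^{-1})$, and since \lemref{Ralston_lemma} gives $E(u_k,t)\to 1$, we obtain the claimed convergence.

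To conclude, the triangle inequality $|\sqrt{E(u,t)} - \sqrt{E(v_k,t)}| \le \sqrt{E(r_k,t)}$ applied at $t=0$ and $t=T$, combined with the two convergences above and with $G_0^+(x_0,\xi_0)=1$, yields $E(u,0)\to 1$ and $E(u,T)\to |G_T^+(x_0,\xi_0)|^2$; choosing $k$ sufficiently large (depending on $\ve$ and $T$) then produces the required exact solution. I expect the main delicate point to be the forward energy inequality under pseudodifferential damping: because $W$ is merely an order-zero \emph{operator}, not multiplication by a nonnegative function, one cannot invoke pointwise positivity but must rely on the operator inequality $\Real\langle W\partial_t r_k,\partial_t r_k\rangle \ge 0$ to absorb the damping term — once this is done cleanly, the remaining computations are routine.
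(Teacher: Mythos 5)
Your proposal is correct and follows essentially the same route as the paper: correct the quasimode $v_k$ from Proposition~\ref{dw_quasimode} to an exact solution (your $u = v_k + r_k$ is the paper's $\omega_k$), show the energy of the correction vanishes as $k\to\infty$, show $E(v_k,t)\to|G_t^+(x_0,\xi_0)|^2$ using that $u_k$ itself is $O(k^{-1})$ in $L^2$, and conclude via the triangle inequality on $\sqrt{E}$. The only notable technical difference is in controlling the correction: the paper bounds $|\partial_t E(\omega_k-v_k,t)|$ directly by $C k^{-1/2+\ve}$ using the a priori uniform bound on $\|\partial_t(\omega_k-v_k)\|_{L^2}$ and then integrates, while you run a Gronwall argument on $\sqrt{E(r_k,t)+\e}$ exploiting $E(r_k,0)=0$; both are valid, and yours actually produces a slightly better rate ($k^{-1+2\ve}$ versus $k^{-1/2+\ve}$), though the weaker rate suffices. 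Your explicit three-term expansion of $E(v_k,t)$ using $c'(t)=-2w(x_t,\xi_t)c(t)$ is a spelled-out version of what the paper leaves implicit. The one thing to tidy is notational: the $\ve$ in the exponent of Proposition~\ref{dw_quasimode} should not be identified with the $\ve$ of the present proposition; fix the former at, say, $1/4$ so that the error tends to zero, then choose $k$ large enough to beat the latter.
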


\begin{proof}
Let $u_k$ and $v_k$ be as defined previously. Then, define $\omega_k$ as the unique solution of the damped wave equation with initial conditions $\omega_k(x,0) = v_k(x,0)$ and $\p_t \omega_k(x,0) = \p_t v_k(x,0)$. 
It is immediate that 
\[E(\omega_k,0) = E(v_k,0) = E(u_k,0)\to 1,\quad \text{as }k\to\infty.\]

 To see \eqref{omega_eqn}, first note by the triangle inequality
\begin{equation}\label{omegaktriangle}
|E(\omega_k, t)^{\frac{1}{2}} - E(v_k,t)^{\frac{1}{2}}| \leq E(\omega_k-v_k, t)^{\frac{1}{2}}.
\end{equation}
Thus, it suffices to prove that $\lim_{k \ra \infty} E(v_k,t) =|G_t^+(x_0,\xi_0)|^2$ and that $\lim\limits_{k\to\infty}E(\omega_k-v_k,t) =0$. To see that $\lim_{k \ra \infty} E(v_k, t) = |G_t^+(x_0,\xi_0)|^2$, note that by the definition of $v_k$ and properties of $G_t^+$
\begin{align*}
E(v_k,t) & = \frac{1}{2} \int\limits_M |G_t^+(x_0, \xi_0) \p_t u_k(x, t) - w(x_t, \xi_t) G_t^+(x_0, \xi_0) u_k(x, t)|^2\\
& \hskip 0.7in + |G_t^+(x_0,\xi_0) \nabla_g u_k(x, t)|^2\, dv_g(x). 
\end{align*}
Now since $w(x_t,\xi_t)$ and $G_t^+$ are bounded
\[\int\limits_M\left|w(x_t, \xi_t) G_t^+(x_0, \xi_0) u_k(x, t)\right|^2\,dv_g(x) \le C\ltwo{u_k(t,\cdot)}^2 \leq C'k^{-2} ,\]
for some $C,\,C' > 0$. Thus, 
\begin{align}\label{quasienergyapproach}
\lim\limits_{k \ra \infty} E(v_k, t) & = \lim\limits_{k\to\infty}\frac{1}{2}\int\limits_{M}\left| G_t^+(x_0,\xi_0)\partial_t u_k(x,t)\right|^2  + \left|G_t^+(x_0,\xi_0)\nabla_g u_k(x,t)\right|^2\,dv_g(x)\nonumber\\
& = \left|G_t^+(x_0,\xi_0)\right|^2\lim\limits_{k\to\infty}E(u_k,t) \nonumber\\ 
& = \left|G_t^+(x_0,\xi_0)\right|^2,
\end{align}
where in the final equality we used that $\lim\limits_{k\to\infty}E(u_k,t) = 1$.

To control $E(\omega_k-v_k, t)$, let $f_k = (\p_t^2 -\Delta + 2 W \p_t) v_k.$ Then 
\[(\p_t^2 - \Delta +2 W \p_t)(v_k-\omega_k)=f_k.\]
By \propref{dw_quasimode}, for any $\ve,T > 0$ there exists a $C_{\ve,T} > 0$ such that 
\begin{equation}\label{fkcontrol}
\sup\limits_{t\in[0,T]}\|f_k(t,\cdot)\|_{L^2(M)}\le C_{\ve,T} k^{-\frac{1}{2} + \ve}.
\end{equation}
By direct computation
\begin{align*}
\p_t E(\omega_k-v_k, t) & = \int\limits_{M}(\partial_t^2 - \Delta_g)(\omega_k-v_k)\partial_t\overline{(\omega_k-v_k)} +(\partial_t^2 - \Delta_g)\overline{(\omega_k-v_k)}\partial_t(\omega_k-v_k)\,dv_g(x)\\
&= 2\Real\int\limits_M [f_k- 2W\partial_t(\omega_k-v_k)]\partial_t\overline{(\omega_k-v_k)}\,dv_g(x)\\
& = 2\Real\int\limits_{M} f_k\cdot \partial_t \overline{(\omega_k-v_k)}\,dv_g(x)-4\Real\langle W\partial_t(\omega_k-v_k),\partial_t(\omega_k-v_k)\rangle_{L^2(M)} .
\end{align*}
Note that the second term on the right-hand side above is nonpositive, since $W$ is a nonnegative operator. Now, using \eqref{fkcontrol} and that $\ltwo{\p_t (v_k-\omega_k)(t,\cdot)}$ is uniformly bounded for $k \in \mathbb{N}$ and $t \in [0,T]$, there exists $C_{\ve,T}' > 0$ such that
\[\sup\limits_{t\in[0,T]}\left|2\Real\int\limits_M f_k\partial_t\overline{(\omega_k-v_k)}\,dv_g(x)\right| \le 2\|f_k(t,\cdot)\|_{L^2}\|\partial_t(\omega_k-v_k)(t,\cdot)\|_{L^2}\le C_{\ve,T}'k^{-\frac{1}{2} + \ve}.\]
Thus, for any $\ve > 0$
\[\sup\limits_{t\in[0,T]}\left|\p_t E(\omega_k-v_k,t) \right|\leq C_{\ve,T}'k^{-\frac{1}{2}+\ve}.\] 
Integrating in $t$ gives 
$$
\sup_{t \in [0,T]} E(v_k-\omega_k,t) \leq C_{\ve,T}'T k^{-\frac{1}{2}+\ve}.
$$

Combining this with \eqref{omegaktriangle} and \eqref{quasienergyapproach} yields \eqref{omega_eqn}.
\end{proof}

For the penultimate step in the proof of the upper bound for $\alpha$, we will show that $t \mapsto t L(t)$ is superadditive. That is, for $r,t\geq 0$, $(t+r) L(t+r) \geq t L(t) + rL(r)$. To see this observe
\begin{align*}
(t+r) L(t+r) &= \inf_{(x_0,\xi_0) \in S^*M} \int_0^{r+t} w(x_s, \xi_s) ds \\
&=\inf_{(x_0,\xi_0) \in S^*M} \left(\int_0^{t} w(x_s, \xi_s) ds+\int_t^{t+r} w(x_s,\xi_s) ds \right) \\
& \geq \inf_{(x_0,\xi_0) \in S^*M} \int_0^t w(x_s,\xi_s) ds + \inf_{(x_0,\xi_0) \in S^* M} \int_t^{t+r} w(x_s,\xi_s) ds \\
&= \inf_{(x_0,\xi_0) \in S^* M} \int_0^t w(x_s, \xi_s) ds + \inf_{(x_0,\xi_0)\in S^*M} \int_0^r w(x_s,\xi_s) ds\\
& = tL(t)+ rL(r).
\end{align*}
Now by Fekete's lemma, $L_\infty :=\lim\limits_{t\to\infty}L(t) = \sup\limits_{t\in [0,\infty)}L(t)$, and thus $L(t) \le L_\infty$ for all $t$. That the supremum is not infinite follows from the fact that $w(x,\xi)$ is uniformly bounded on $T^*M.$ 

We are now ready to show that $\alpha \leq 2 L_{\infty}$. Assume for the sake of contradiction that $\alpha = 2 L_{\infty} + 3\eta$ for some $\eta>0$. Then since $2(L_\infty + \eta) < \alpha$, there exists a $C > 0$ such that for all $t \geq 0$ and all solutions $u$ of \eqref{damped_wave}, 
\begin{equation}\label{contra_estimate}
E(u,t) \leq C E(u,0) e^{-2t(L_{\infty} + \eta)}.
\end{equation}
For the next step, it is convenient to remove the factor of $C.$ To accomplish this,  choose $T>0$ large enough so that $\max(C,1) <e^{T \eta}$. Then
$$
C e^{-2T(L_{\infty} + \eta)} < e^{-T(2L_{\infty}+\eta)}.
$$
Since $L(t)\le L_\infty$ for all $t$, we obtain 
\begin{equation}\label{contradecayequation}
Ce^{-2T(L_\infty + \eta)} < e^{-2TL_{\infty}-T\eta} \leq e^{-2TL(T)-T\eta}.
\end{equation}
Now, we recall that
\[-TL(T) = \sup\limits_{(x,\xi)\in S^*M}\ln G_T^+(x,\xi).\]
Thus, there exists a point $(x_0,\xi_0)\in S^*M$ such that $\ln G_T^+(x_0,\xi_0) > -T L(T) -\frac{1}{2}T\eta.$ Therefore,
\[e^{-2TL(T) - T\eta} <  |G_T^+(x_0,\xi_0)|^2.\]
So by \eqref{contradecayequation} there exists a $\delta > 0$ such that 
\[Ce^{-2T(L_\infty + \eta)} < |G_T^+(x_0,\xi_0)|^2 - \delta.\]
Now, by \propref{exact_solution_prop}, there exists an exact solution $u$ of \eqref{damped_wave} such that
\[1 > E(u,0) - \frac{\delta}{2} \quad \text{ and }\quad E(u,T) > |G_T^+(x_0,\xi_0)|^2 - \frac{\delta}{2}.\]
Thus,
\begin{align*}
E(u,T) & > E(u,T)\lp E(u,0) - \frac{\delta}{2}\rp\\
& > E(u,0)\lp |G_T^+(x_0,\xi_0)|^2 -\frac{\delta}{2}\rp - \frac{\delta}{2}E(u,T)\\
& > E(u,0)\lp |G_T^+(x_0,\xi_0)|^2 -\frac{\delta}{2}\rp - \frac{\delta}{2}E(u,0)\\
& = E(u,0)\lp |G_T^+(x_0,\xi_0)|^2 -\delta\rp.
\end{align*}
Therefore,
\[E(u,T) > E(u,0)(|G_T^+(x_0,\xi_0)|^2 - \delta) > C E(u,0)e^{-2T(L_\infty + \eta)},\]
but this contradicts \eqref{contra_estimate}. Thus, we must have $\alpha \le 2L_\infty.$ Combining this with the discussion at the beginning of this section, we have proved the upper bound 
\[\alpha \le 2\min\{-D_0,L_\infty\}.\]
We complete the proof of \thmref{best_const_thm} in the next section by proving the corresponding lower bound for $\alpha.$ 

\section{The lower bound for $\alpha$}\label{lower}
In this section, we prove that the best exponential decay rate satisfies 
\begin{equation}\label{lower_bound}
\alpha \ge 2\min\{-D_0,L_\infty\},
\end{equation}
which is the final component of the proof of \thmref{best_const_thm}. In contrast to the proof of the upper bound, this section proceeds in direct analogy to the work of Lebeau, and so we omit many of the details which can be found in \cite{Lebeau1996,Klein2017}. While the proofs presented here are not new, we include them to introduce notation that is used later in \secref{thm_1_proof}, where we use \thmref{best_const_thm} to prove \thmref{exp_decay_thm}. 


We begin with the following energy inequality, which for the multiplicative case is presented as Lemma 1 in \cite{Lebeau1996}.
\begin{lemma}\label{energy_lemma}
For every $T> 0$ and every $\ve > 0$, there exists a constant $c(\ve,T) > 0$ so that for every solution $u$ of \eqref{damped_wave}, 
\begin{equation}
E(u,T) \le (1+\ve)e^{-2TL(T)}E(u,0) + c(\ve,T)\|(u_0,u_1)\|_{L^2\bigoplus H^{-1}}^2,
\end{equation}
\end{lemma}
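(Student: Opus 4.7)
The plan is to argue by contradiction, mirroring Lebeau's Lemma~1 in \cite{Lebeau1996}. Suppose the lemma fails, so there exist $T, \ve > 0$ and solutions $u_k$ of \eqref{damped_wave} with
\[E(u_k, T) > (1+\ve)e^{-2TL(T)} E(u_k, 0) + k\|(u_{0,k}, u_{1,k})\|^2_{L^2\oplus H^{-1}}.\]
Rescaling, I would normalize $E(u_k, 0) = 1$; then the non-increase of energy yields $E(u_k, T) \leq 1$, whence the displayed inequality forces $\|(u_{0,k}, u_{1,k})\|_{L^2\oplus H^{-1}} \ra 0$. Standard low-regularity energy estimates for \eqref{damped_wave} then imply $u_k \ra 0$ strongly in $L^2_{loc}(\R\times M)$, while the uniform energy bound gives $\{u_k\}$ bounded in $H^1_{loc}(\R\times M)$. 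Hence $u_k \rightharpoonup 0$ weakly in $H^1_{loc}$, and after extracting, I may assume $\{u_k\}$ is pure with microlocal defect measure $\nu$ on $S^*(\R\times M)$, supported in $S^+\cup S^-$.

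Next, I would link the energy to $\nu$. Writing $\nu^\pm := \nu|_{S^\pm}$, I disintegrate each in the $t$-variable as $\nu^\pm = \int \sigma_t^\pm\, dt$ with $\sigma_t^\pm$ measures on $S^*M$, and set $\mu(t) := \sigma_t^+(S^*M) + \sigma_t^-(S^*M)$. For $\phi \in C_c(\R)$, testing the order-two operator $\phi(t)(-\Delta_g - \partial_t^2)$, whose principal symbol is $\phi(t)(|\xi|^2_g + \tau^2)$, against $u_k$ and absorbing the $t$-commutator (since $u_k \ra 0$ strongly in $L^2_{loc}$) yields
\[\int_\R \phi(t)\, 2E(u_k, t)\, dt = \langle \phi(t)(-\Delta_g - \partial_t^2) u_k, u_k\rangle_{L^2(\R\times M)} + o(1).\]
Using $|\xi|^2_g + \tau^2 = \tfrac12$ on $S^*(\R\times M)$ and the disintegration, passing to the limit gives the key identity
\[\lim_{k\ra\infty}\int_\R \phi(t) E(u_k, t)\, dt = \frac{1}{4}\int_\R \phi(t) \mu(t)\, dt.\]
Meanwhile, \lemref{defect_measure_propagation} together with \eqref{G_t_plus}, after disintegration in $t$, gives $\sigma_t^\pm(S^*M) = \int_{S^*M} G_t^+(x,\xi)\, d\sigma_0^\pm(x,\xi)$. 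Combined with the uniform bound $G_t^+(x,\xi) \leq e^{-2tL(t)}$ coming directly from the definition of $L(t)$, this yields the propagation inequality
\[\mu(t) \leq e^{-2tL(t)} \mu(0)\quad\text{for all }t \geq 0.\]

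Finally, I would close via incompatible one-sided pointwise limits. Since $E(u_k, t) \leq 1$ on $[0,T]$, taking non-negative $\phi$ concentrated near $0^+$ in the key identity gives $\mu(0^+) \leq 4$. Conversely, since $E$ is non-increasing and $E(u_k, T) > (1+\ve)e^{-2TL(T)}$, for every $t \in [0,T]$ we have $E(u_k, t) > (1+\ve)e^{-2TL(T)}$, and taking $\phi$ concentrated near $T^-$ produces $\mu(T^-) \geq 4(1+\ve)e^{-2TL(T)}$. Using the continuity of $t \mapsto e^{-2tL(t)}$ on $(0,\infty)$, which follows from the continuity of $L$ (a consequence of the continuity of the geodesic flow and of $w$), the propagation inequality then yields
\[4(1+\ve)e^{-2TL(T)} \leq \mu(T^-) \leq e^{-2TL(T)} \mu(0^+) \leq 4e^{-2TL(T)},\]
i.e., $1+\ve \leq 1$, the desired contradiction.

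The main obstacle I anticipate is the rigorous handling of the disintegration of $\nu^\pm$ in $t$ and its regularity, since a priori $\mu(t)$ is only defined almost everywhere; but the explicit formula $\sigma_t^\pm(S^*M) = \int G_t^+\, d\sigma_0^\pm$ shows that $t\mapsto\mu(t)$ is in fact continuous, which resolves this. Otherwise the argument is structurally identical to Lebeau's multiplicative case, since the pseudodifferential nature of $W$ enters only through its principal symbol $w$, which is already incorporated into \eqref{G_t_plus}.
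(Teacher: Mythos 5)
Your proposal is correct and follows the same compactness-uniqueness strategy as Lebeau's proof of his Lemma~1, which is exactly what the paper defers to (the authors explicitly state that Lebeau's proof ``goes through with no modification''). The only small imprecision is in writing $\sigma_t^\pm(S^*M) = \int G_t^+\,d\sigma_0^\pm$ for \emph{both} sheets: on $S^-$ the flow $\Phi_s$ shifts $t\mapsto t-s$ and the relevant cocycle is $G^-$ rather than $G^+$, so the correct kernel is $G_{-t}^-(x,\xi)=\exp\bigl(-\int_0^t 2w(\varphi_{-r}(x,\xi))\,dr\bigr)$; but since reversing the geodesic flow is a bijection of $S^*M$, this kernel obeys the identical bound $\le e^{-2tL(t)}$, and the propagation inequality $\mu(t)\le e^{-2tL(t)}\mu(0)$ and the contradiction go through unchanged.
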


\noindent This inequality is proved using straightforward properties of the propagation of the defect measure, so the proof from \cite{Lebeau1996} goes through with no modification. To obtain the desired lower bound on $\alpha$ we must further control the $\|(u_0,u_1)\|_{L^2\bigoplus H^{-1}}^2$ on the right hand side. 

Given \lemref{energy_lemma}, we proceed by introducing the adjoint $A_W^* = \lp\begin{array}{cc}0 & -\Id\\ -\Delta_g& -2W\end{array}\rp$ of the semigroup generator $A_W$. Note that the spectrum of $A_W^*$ is the conjugate of the spectrum of $A_W$. Thus, we denote by $E_{\lambda_j}^*$ the generalized eigenspace of $A_W^*$ with associated eigenvalue $\overline{\lambda_j}$. Recall that $\mathscr H = H^1(M)\oplus L^2(M)$, equipped with the natural norm. It is also useful to introduce the $\dot {\mathscr H}$ seminorm defined for elements of $\mathscr H$ by 
\[\|(u_0,u_1)^T\|_{\dot{\mathscr H}}^2 = \|\nabla u_0\|_{L^2}^2 + \|u_1\|_{L^2}^2.\]
For each $N \ge 1$, define the subspace 
\[H_N = \left\{ \varphi\in \mathscr H:\, \langle\varphi,\psi\rangle_{\mathscr H} = 0, \, \forall \psi\in \bigoplus\limits_{|\lambda_j|\le N} E_{\lambda_j}^*\right\}.\]
Our first observation is that $H_N$ is invariant under the action of the semigroup $e^{tA_W}$. To demonstrate this, let $\{\psi_k\}$ be a basis of the finite dimensional space $\bigoplus\limits_{|\lambda_j|\le N}\!\! E_{\lambda_j}^*\subset D(A_W^*).$ Now,  since $E_{\lambda_j}^*$ is invariant under $A^*_W$, we can express each $A_W^* \psi_l$ as a finite linear combination of the $\{\psi_k\}$. Thus, for each $\ell$ and any $\varphi\in H_N$, we have
\[
\partial_t\langle e^{tA_W}\varphi,\psi_\ell\rangle_{\mathscr H}\big|_{t= 0} = \langle e^{tA_W}\varphi,A_W^*\psi_\ell\rangle_{\mathscr H}\big|_{t= 0} = \sum c_{\ell,k}\langle \varphi,\psi_k\rangle_{\mathscr H} = 0,
\]
 by the definition of $H_N$. Repeating this argument, we see that $\partial_t^j\langle e^{tA_W}\varphi,\psi_\ell\rangle_{\mathscr H}\big|_{t= 0} = 0$ for all $j.$ Observing that $\langle e^{tA_W}\varphi,\psi_\ell\rangle_{\mathscr H}$ is an analytic function of $t$, we have $\langle e^{tA_W}\varphi,\psi_\ell\rangle_{\mathscr H} = 0$ for all $t\in\R.$ Therefore $e^{tA_W}\varphi\in H_N$. 

Now, define $\mathscr H' = L^2\oplus H^{-1}$ and let $\theta_N$ denote the norm of the embedding of $H_N$ in $\mathscr H'$, which is well-defined since $M$ is compact. Since $W$ is bounded on $L^2$, it is compact as an operator from $L^2\to H^{-1}$. Therefore $A_W^*:\mathscr H\to \mathscr H'$ is a compact perturbation of the skew-adjoint operator $\lp\begin{array}{cc}0 & - \Id\\ -P & 0\end{array}\rp$. Thus, the family $\{E_{\lambda_j}^*\}_{j=0}^\infty$ is total in $\mathscr H$, and so $\lim_{N \ra \infty} \theta_N = 0$ (c.f. \cite[Ch. 5, Theorem 10.1]{GKBook1969}).

We can now proceed with the proof of \eqref{lower_bound}. Assume that $2\min\{-D_0,L_\infty\} > 0$, otherwise the statement is trivial. Choose $\eta > 0$ small enough so that $\beta = 2\min\{-D_0,L_\infty\} - \eta > 0$ and take $T$ large enough so that $4|L_\infty - L(T)| < \eta$ and $e^{\frac{\eta T}{2}}>3$. Then, by \lemref{energy_lemma} with $\ve=1$, there exists a constant $c(1,T)$ such that for every solution $u$ of \eqref{damped_wave}
\begin{equation}\label{energylowboundintermed}
E(u,t) \leq 2 e^{-2TL(T)}E(u,0) + c(1,T)\|(u_0,u_1)\|_{\mathscr H'}^2.
\end{equation}
Next, choose $N$ large enough so that $c(1,T)\theta_N^2 \le e^{-2TL(T)}$. Then, for solutions $u$ of \eqref{damped_wave} with initial data $(u_0,u_1)^T\in H_N$ 
\[E(u,T)\le 3 e^{-2TL(T)}E(u,0).\]
Since $H_N$ is invariant under evolution by $e^{tA_W}$
\[E(u,kT) \le 3^k e^{-2k TL(T)}E(u,0), \quad\forall k\in\N.\]
Then, we can use the fact that $4|L_\infty-L(T)|< \eta$ and $\frac{\eta T}{2} > \ln 3$ to obtain that
\begin{align*}
E(u,kT) & \le 3^k e^{-2kT(L_\infty - \eta/4)}E(u,0)\\
& \le \lp e^{\ln 3 - \frac{\eta T}{2}}\rp^k e^{-2kTL_\infty}E(u,0)\\
& \le e^{-kT\beta}E(u,0),
\end{align*}
where the final inequality follows from the fact that $\beta \le 2L_\infty -\eta < 2L_\infty$ by definition. Since the energy is nondecreasing, it follows that 
\begin{equation}\label{H_N_energy}
E(u,t)\le C e^{-\beta t}E(u,0)\, \quad \forall t\ge 0,
\end{equation}
for some constant $C > 0.$ 

To extend \eqref{H_N_energy} to all solutions of \eqref{damped_wave}, let $\Pi$ denote the orthogonal projection from $\mathscr H$ onto $\bigoplus\limits_{|\lambda_j|\le N}\!\! E_{\lambda_j}.$ Then for any $v = (u_0,u_1)^T\in \mathscr H$, there is an orthogonal decomposition of the form $v = \Pi v + (\Id - \Pi)v$. Since $E_{\lambda_j}$ and $E_{\lambda_k}^*$ are orthogonal for $\lambda_j\ne \lambda_k$, we have that $(\Id - \Pi)v\in H_N,$ and hence $H_N^\perp = \bigoplus\limits_{|\lambda_j|\le N}\!\! E_{\lambda_j}$. Since $E_{\lambda_j}$ is invariant under $e^{tA_W}$ and $H_{N}^\perp$ is finite dimensional, we have that there exists a $C>0$ so that for all solutions $u$ of \eqref{damped_wave} with initial data in $H_N^\perp$,
\begin{equation}\label{W_N_energy}
E(u,t)\le Ce^{2D_0}E(u,0)\le Ce^{-\beta t}E(u,0),\quad \forall t\ge 0.
\end{equation}
Finally, since $\Pi$ and $\Id - \Pi$ are continuous with respect to the $\dot {\mathscr H}$ seminorm, for some $C >0$
\[E(\Pi u,0) + E((\Id - \Pi)u,0) \le C E(u,0).\]
Therefore, using the decomposition $\Pi+(\Id-\Pi)$ on the initial data of any solution $u$ we can apply \eqref{H_N_energy} and \eqref{W_N_energy} to obtain 
\begin{equation}
E(u,t)\le C e^{-\beta t}E(u,0),\quad\forall t\ge 0,
\end{equation}
for some possibly larger $C>0.$ By definition of the best possible decay rate, $\alpha \ge \beta = 2\min\{-D_0,L_\infty\} - \eta$. Since $\eta$ can be taken arbitrarily small, this proves \eqref{lower_bound}. Combining this with the upper bound obtained in Section \ref{upper} completes the proof of \thmref{best_const_thm}.

\section{Proof of \thmref{exp_decay_thm}}\label{thm_1_proof}
In this section we show that \thmref{best_const_thm} implies \thmref{exp_decay_thm}. First, we will assume both \assumsref{geometric_control}{unique_cont} are satisfied. We will show this implies $\alpha > 0$, which is equivalent to exponential energy decay. Note that \assumref{geometric_control} immediately implies that $L_\infty \ge c > 0$. Thus, we only need to show that $D_0 < 0$. For this, we introduce the quantity 
\[D_\infty :=\lim\limits_{R\to\infty}\sup\{\Real(\lambda):\,|\lambda| > R,\,\lambda\in\Spec{A_W}\}.\]
We claim that $D_\infty \le -L_\infty$. To show this, first recall $E_{\lambda_j}$ and $H_N$ from \secref{lower}. Let $u$ be a solution to \eqref{damped_wave} with initial data $(u_0,u_1)^T\in E_{\lambda_j}$ with $|\lambda_j| > N$.  Then $u = e^{tA_W}(u_0,u_1)^T = e^{t\lambda_j}(u_0,u_1)^T$. Note that $E_{\lambda_j}\subset H_N$ whenever $|\lambda_j| > N$. Combining this with the proof of \eqref{H_N_energy}
\[e^{2\Real(\lambda_j) t}E(u,0) = E(u,t)  \le Ce^{-\beta t}E(u,0), \]
for every $0 < \beta < 2L_\infty$. Hence, $2\Real(\lambda_j)\le -\beta$ whenever $|\lambda_j| \ge N$, and so $\Real(\lambda_j) \le -L_{\infty}$ for such $\lambda_j$. It immediately follows that $D_\infty  \le -L_\infty < 0$. 

By the abstract spectral theory arguments in the proof of \cite[Lemma 4.2]{AnantharamanLeautaud2014}, the spectrum of $A_W$ consists only of isolated eigenvalues and $\Real(\lambda)\le 0$ for all $\lambda\in\Spec(A_W)$. Thus, in order to have $D_0 = 0$, either $D_\infty = 0$ or there exists a nonzero eigenvalue of $A_W$ on the imaginary axis. Since we have already shown $D_\infty < 0$, we need only rule out nonzero imaginary eigenvalues. Suppose $i\lambda\in \text{Spec}(A_W)$ with $\lambda \in\R$ and corresponding eigenvector $(v_0,v_1)^T.$ Then $v_1 = \lambda v_0$, and
\begin{equation}\label{stationaryeq}
\Delta_g v_0 +\lambda^2 v_0 - 2i\lambda W v_0 = 0.
\end{equation}
Taking the $L^2$ inner product of both sides with $v_0$ and then taking the imaginary part gives
\[- 2\lambda\langle Wv_0,v_0\rangle = 0.\]
If $\lambda = 0$, the equation is trivially satisfied. However, if $\lambda \ne 0$, then $\langle W v_0,v_0\rangle = 0$. Recalling that $W = \sum B_j^*B_j$ for some collection of operators $B_j$, we must have $W v_0 = 0$. Then by \eqref{stationaryeq} $v_0$ is an eigenfunction of $\Delta_g$ with eigenvalue $-\lambda^2$ and $v_0 \in \ker W$. But by \assumref{unique_cont}, this is impossible. Thus, the only possible eigenvalue of $A_W$ on the imaginary axis is zero and we cannot have $D_0 = 0.$ Combining this with the fact that $L_\infty > 0$, we have shown that \assumsref{geometric_control}{unique_cont} imply $\alpha > 0$, which in turn demonstrates that solutions to \eqref{damped_wave} experience exponential energy decay. 


We now prove the reverse implication in \thmref{exp_decay_thm}. For this, we assume that \eqref{exp_decay_eqn} holds with some $\beta >0$ for all solutions $u$ and we want to see that \assumsref{geometric_control}{unique_cont} hold. By definition, $\alpha \ge \beta > 0$, and hence both $-D_0$ and $L_\infty$ are strictly positive. Because $L_\infty \ge \alpha/2 > 0$ \assumref{geometric_control} holds. Similarly, since $D_0 < 0$, there cannot be any eigenvalues of $A_W$ on the imaginary axis except possibly at zero. Now suppose that $v\in L^2$ satisfies $-\Delta_gv  =\lambda^2 v$ with $\lambda \ne 0$ and $Wv = 0.$ Then $(v,i\lambda v)^T$ is an eigenvector of $A_W$ with eigenvalue $i\lambda\ne 0$, which is a contradiction. Thus, \assumref{unique_cont} must also hold, which completes the proof of \thmref{exp_decay_thm}.

\section{A Class of Examples on Analytic Manifolds}
\label{examples}
\noindent One of the key hypotheses of \thmref{exp_decay_thm} was that the damping coefficient $W$ must not annihilate any eigenfunctions of $\Delta_g$ associated with nonzero eigenvalues. In the case where $W$ is a multiplication operator whose support satisfies the classical geometric control condition, this is always satisfied by the unique continuation properties of elliptic operators \cite{RauchTaylor1975}. However, when the damping is pseudodifferential it is much more difficult to check this hypothesis. 

In this section, we produce a collection of operators on real analytic manifolds which satisfy \assumref{unique_cont} and are not multiplication operators. We also give an example of an explicit pseudodifferential damping coefficient on $\mathbb T^2$ which satisfies \assumsref{geometric_control}{unique_cont}. The primary tool in this discussion is the analytic wavefront set, and so we begin by providing some background definitions for the reader's convenience. More details can be found in \cite[\S 8.4-8.6]{HormanderBook1983}.

Given a set $X\subseteq \R^n$ and a distribution $u\in \Dc'(X)$, if $u$ is real analytic on an open neighborhood of $x_0$ we write that $u\in C^a$ near $x_0\in X.$  In analogy with the relationship between the standard wavefront set and $C^\infty$ singularities, one can resolve $C^a$ singularities by defining the \textit{analytic} wavefront set, written $WF_A(u)$ and defined as follows. 

\begin{definition}
We say that a point $(x_0,\xi_0)\in T^*X\setminus0$ is not in $WF_A(u)$, if there exists an open neighborhood $U$ of $x_0$, a conic neighborhood $\Gamma$ of $\xi_0$ and a bounded sequence $u_N \in \Ec'(X)$, which are equal to $u$ on $U$, and each satisfy
\begin{equation}\label{localanalytic}
|\wh{u}_N(\xi)| \leq C \left(\frac{N+1}{|\xi|}\right)^N,
\end{equation}
for all $\xi\in\Gamma.$
\end{definition}
\noindent By \cite[Prop. 8.4.2]{HormanderBook1983}, we have that $u\in C^a$ near $x_0$ if and only if $WF_A(u)$ contains no points of the form $(x_0,\xi)$ with $\xi \ne0.$

We also introduce a set, which we can be thought of as the analytically invertible directions of $u$ denoted by $\Gamma_A(u)$. Its complement is commonly called the (analytic) characteristic set of $u$ \cite{HormanderBook1983}.
\begin{definition}
We say that $\xi_0\in \R^n\setminus0$ is in $\Gamma_A(u)$ if there exists a complex conic neighborhood $V$ of $\xi_0$ and a function $\Phi$, which is holomorphic in $\{\xi\in V:\,|\xi| > c\}$ for some $c > 0,$ satisfying $\Phi \wh u = 1$ in $V\cap \R^n$ and there exists $C,N >0$ such that
\[|\Phi(\zeta)|\le C|\zeta|^N,\]
for $\zeta\in V.$ 
\end{definition}
The final preliminary we require is the notion of the normal set of a closed region $F$ contained within a manifold $M$. For the purposes of this definition, we only require that $M$ be $C^2$. 
\begin{definition}
Let $F$ be a closed region in a $C^2$ manifold $M.$ The exterior normal set, $N_e(F)$, is defined as the set of all $(x_0, \xi_0)\in T^*M \setminus 0$ such that $x_0 \in F$ and such that there exists a real valued function $f \in C^2(M)$ with $df(x_0) = \xi_0 \neq 0$ and 
\[f(x) \leq f(x_0), \quad x \in F.\]
The interior normal set of $F$ is then defined by $N_i(F)=\{(x,\xi):\, (x, -\xi) \in N_e(F)\}$ and the full normal set is defined as $N(F)= N_e(F)\bigcup N_i(F)$. We write $\overline{N}(F)$ to denote the closure of the normal set of $F$.
\end{definition}
\noindent Note that the projection of $N_e(F)$ onto $M$ is dense in $\p F$ but might not be equal to $\p F$ \cite[Prop. 8.5.8]{HormanderBook1983}.

With these definitions in hand, we are able to describe a class of pseudodifferential operators which do not annihilate any eigenfunctions of $\Delta_g$.

\begin{lemma}\label{unique_cont_condition}
Let $(M,g)$ be a compact, real analytic manifold of dimension $n$. Suppose $\spacecutoff,\wt\spacecutoff\in C_c^\infty(M)$ are cutoff functions supported entirely within a single coordinate patch, with $\wt \spacecutoff \equiv 1$ on an open neighborhood of the support of $\spacecutoff$. Let $\freqcutoff\in C^\infty(\R^n)$ be homogeneous of degree 0 outside a compact neighborhood of the origin, and define $B\in\Psi_{cl}^0(M)$ in local coordinates by $Bu = \wt\spacecutoff\Op(\freqcutoff(\xi))\spacecutoff u$.  Let $\widecheck{\freqcutoff}$ denote the inverse Fourier transform of $\freqcutoff$ and $\pi_2:\R^n\times\R^n\to\R^n$ denote the natural projection onto the fiber variables $\xi$, if 
\begin{align*}
\pi_2(\overline N(\supp\spacecutoff)) \cap& \Gamma_A(\widecheck\freqcutoff) \ne \emptyset, 
\end{align*}
then for any eigenfunction $u$ of $\Delta_g$, we have $B u\ne 0.$

\end{lemma}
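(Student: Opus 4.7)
The plan is to argue by contradiction. Suppose $Bu=0$ for some nonzero eigenfunction $u$ of $\Delta_g$. Since $(M,g)$ is real analytic and $\Delta_g$ is elliptic with real analytic coefficients in analytic coordinates, analytic elliptic regularity forces $u$ to be real analytic on all of $M$. By the identity principle for real analytic functions, $u$ cannot vanish on any nonempty open set, so $\chi u$ is nonzero on a dense subset of $\supp\chi$ and in particular $\supp(\chi u)=\supp\chi$. Moreover, since $\widetilde\chi\equiv 1$ on an open neighborhood $V$ of $\supp\chi$, the equation $Bu=\widetilde\chi\,\Op(\psi(\xi))(\chi u)=0$ yields $\Op(\psi(\xi))(\chi u)\equiv 0$ on $V$.

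Next I would apply microlocal analytic elliptic regularity to the Fourier multiplier $\Op(\psi)$. Its symbol depends only on $\xi$, hence is trivially real analytic in the base variable, and the directions $\xi_0\in\R^n\setminus 0$ at which $\Op(\psi)$ is analytically elliptic coincide by construction with $\Gamma_A(\widecheck\psi)$: the definition of that set is precisely the existence of a polynomially bounded holomorphic inverse of $\psi$ on a complex conic neighborhood of $\xi_0$, which is exactly the condition for the existence of an analytic microlocal parametrix. Since $\Op(\psi)(\chi u)$ vanishes on $V$ (and $0$ is trivially real analytic), analytic elliptic regularity then forces $(x,\xi_0)\notin\WF_A(\chi u)$ for every $x\in V$ and every $\xi_0\in\Gamma_A(\widecheck\psi)$.

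To close the argument, I would invoke a classical boundary-regularity theorem for the analytic wave front set, namely that for a compactly supported distribution $v$ which is nontrivial in every neighborhood of each point of $\supp v$, one has $\overline N(\supp v)\subseteq \WF_A(v)$ (a version of H\"ormander's boundary theorem, see \cite[\S 8.5]{HormanderBook1983}, together with the Kashiwara--Martineau refinement needed to capture both orientations of the normal set). The nontriviality hypothesis is supplied here by the density of $\{u\ne 0\}$ inside $\supp\chi$, so the theorem applies to $v=\chi u$ and yields $\overline N(\supp\chi)\subseteq \WF_A(\chi u)$. By the hypothesis of the lemma one may choose $(x_0,\xi_0)\in\overline N(\supp\chi)$ with $\xi_0\in\Gamma_A(\widecheck\psi)$; then $x_0\in\supp\chi\subseteq V$, so this point lies simultaneously inside $\WF_A(\chi u)$ (by the boundary theorem) and outside $\WF_A(\chi u)$ (by the ellipticity step), which is the desired contradiction.

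The main obstacle I anticipate is the accurate invocation of the boundary theorem $\overline N(\supp v)\subseteq \WF_A(v)$: one must use a form strong enough to include the closure (rather than only $N_e$) and to handle both orientations of the normal directions, which typically requires either a direct FBI-transform computation or the sharper Kashiwara--Martineau formulation. One must also check that the degenerate vanishing of $\chi$ (to infinite order at $\partial\supp\chi$) does not spoil the nontriviality hypothesis, which it does not --- precisely because $u$ is real analytic and nonvanishing on a dense subset of $\supp\chi$, so $\chi u$ fails to vanish identically in any neighborhood of any boundary point. A secondary bookkeeping matter is reconciling the intrinsic manifold description of $\overline N(\supp\chi)$ with the local Euclidean description in which $\Gamma_A(\widecheck\psi)$ is defined, but since everything occurs in a single coordinate chart this reduces to tracking the coordinate identification $T^*M|_{\text{chart}}\cong\R^n\times\R^n$.
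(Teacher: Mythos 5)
Your proposal is correct and follows essentially the same strategy as the paper's proof: both argue by contradiction, use H\"ormander's boundary theorem $\overline N(\supp\,\cdot)\subseteq\WF_A(\cdot)$ together with the observation that $\supp(\chi u)$ equals (or at least has the same normal set as) $\supp\chi$, and then invoke the analytic wavefront/convolution theorem (H\"ormander 8.6.15) to rule out directions in $\Gamma_A(\widecheck\psi)$ since $\Op(\psi)(\chi u)$ vanishes near $\supp\chi$. Your use of analytic elliptic regularity to get $\supp(\chi u)=\supp\chi$ directly is a mild simplification of the paper's slightly more elementary argument that $N(\supp\chi)\subseteq N(\supp\chi u)$, but the microlocal core is identical.
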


\begin{proof}
We proceed by contradiction, so assume $B u =0$ for some eigenfunction $u$ of $\Delta_g$. Thus $WF_A(B u)= \emptyset$ and we aim to show there exists some $(x_0, \xi_0) \in WF_A(Bu)$. First, by \cite[Thm 8.5.6']{HormanderBook1983}, we have 
\[\overline N(\supp\spacecutoff u)\subseteq WF_A(\spacecutoff u).\]
Since $ u$ is an eigenfunction, it cannot vanish identically on any open set. We claim that this implies
\begin{equation}\label{boundary_eqn}
\partial(\supp\spacecutoff)\subseteq \partial(\supp \spacecutoff u) .
\end{equation}
To see this, suppose $x\in\partial(\supp\spacecutoff)$ and let $V$ be any open neighborhood of $x$. Since $\spacecutoff(x) = 0$, we have that $\spacecutoff(x) u(x) = 0$, so it is enough to show that $\spacecutoff u$ is not identically zero on all of $V$. Without loss of generality, we may assume that $V$ lies entirely within the same coordinate patch containing $\supp\spacecutoff$. Since $x$ is a boundary point of the support, $\spacecutoff$ does not vanish identically on $V$. By continuity, this implies the existence of a smaller open neighborhood $\wt V\subset V$ (not containing $x$) where $\spacecutoff$ is never zero. Since $ u$ is an eigenfunction, it cannot vanish identically on $\wt V$, and hence $\spacecutoff  u$ is not identically zero on $\wt V\subseteq V$, which proves \eqref{boundary_eqn}.

Now we want to show $N(\supp \spacecutoff) \subset N(\supp \spacecutoff u)$. Take $(x_0, \xi_0) \in N(\supp \spacecutoff),$ note $x_0$ maximizes a function $f$ on $\supp \chi$ with $df(x_0)\neq0$, so $x_0 \in \p (\supp \spacecutoff) \subseteq \p (\supp \spacecutoff u)$. That is $x_0$ is not an interior point. Furthermore, since $\supp \spacecutoff \supseteq \supp\spacecutoff u$ and $f$ is maximized at $x_0$ in $\supp \spacecutoff$ it must also be maximized at $x_0$ when restricted to the smaller set $\supp \spacecutoff u.$ Therefore $N(\supp\spacecutoff)\subseteq N(\supp \spacecutoff u)$ and $\overline{N}(\supp\spacecutoff)\subseteq \overline{N}(\supp \spacecutoff u)$.

Hence,
\begin{equation}\label{N_eqn}
\overline N(\supp\spacecutoff)\subseteq WF_A(\spacecutoff u).
\end{equation}

Since the cutoff function $\spacecutoff$ is supported in a single coordinate patch, we can treat $\spacecutoff u$ and $\Op(\freqcutoff)\spacecutoff u$ as functions on $\R^n.$ Now, observe that $\widecheck{\freqcutoff} \ast\spacecutoff u = \Op(\freqcutoff) \spacecutoff  u$, where $\ast$ denotes standard convolution. This, along with \cite[Thm 8.6.15]{HormanderBook1983} gives
\begin{equation}\label{WF_eqn}
WF_A(\spacecutoff u) \subseteq WF_A(\Op(\freqcutoff) \spacecutoff u) \cup (\Rn \times \Gamma_A( \widecheck{\freqcutoff})^c).
\end{equation}
Applying \eqref{N_eqn}, we obtain
$$
\overline{N}(\supp \spacecutoff) \subseteq WF_A(\Op(\freqcutoff) \spacecutoff u)\cup (\Rn \times \Gamma_A( \widecheck{\freqcutoff})^c),
$$
and therefore,
\[\overline{N}(\supp \spacecutoff)  \cap (\Rn \times \Gamma_A(\widecheck\freqcutoff))\subseteq WF_A(\Op(\freqcutoff) \spacecutoff u).\]
By hypothesis, there exists a point
\[
(x_0,\xi_0)\in\overline N(\supp \spacecutoff)\cap (\Rn \times \Gamma_A(\widecheck\freqcutoff)) \subseteq WF_A(\Op(\freqcutoff)\spacecutoff u).\]
In particular, $x_0\in\supp\spacecutoff$, and since $\wt\spacecutoff\equiv 1$ on a neighborhood of $\supp\spacecutoff$, we see that $(x_0,\xi_0)$ must also lie inside $WF_A(\wt \spacecutoff\Op(\freqcutoff)\spacecutoff u) = WF_A(B u)$. This contradicts the assumption that $B u = 0$, and thus the proposition is proved.

\end{proof}

\begin{rmk}\textnormal{ It is worth noting that the argument of this lemma works for when $\Delta_g$ is replaced by $P$, an elliptic second order pseudodifferential operator, as long as $P$'s eigenfunctions do not vanish identically on open sets. 
}
\end{rmk}

\noindent Given \propref{unique_cont_condition}, the proof of \thmref{unique_cont_thm} is straightforward.
\begin{proof}[Proof of \thmref{unique_cont_thm}]
Given a real analytic manifold $(M,g),$ take $\spacecutoff,\wt \spacecutoff$ as in the statement of \propref{unique_cont_condition}. Let $(x_0,\xi_0) \in N_e(\supp \spacecutoff)$ be an arbitrary exterior normal. Then, take any $\freqcutoff\in C^\infty(\R^n)$ which is identically one in a conic neighborhood of $\xi_0$, zero on the complement of a slightly larger conic neighborhood, and homogeneous of degree 0 away from the origin. Then $\Gamma_A(\widecheck\freqcutoff )$ contains $\xi_0$ because $\freqcutoff \equiv 1$ on a conic neighborhood of $\xi_0$, and so one may take $\Phi \equiv 1$ in the definition of $\Gamma_A.$ \propref{unique_cont_condition} then guarantees that $B = \wt\spacecutoff\Op(\freqcutoff)\spacecutoff$ does not annihilate any eigenfunctions of $\Delta_g,$ and thus neither does $W = B^*B.$ One can repeat this process in any finite number of coordinate patches to show that there exists $W=\sum_{j=1}^N B_j^*B_j$ with the same property.
\end{proof}
We now construct a pseudodifferential damping coefficient on $\mathbb T^2$ which satisfies \assumsref{geometric_control}{unique_cont}.
\begin{example}
\normalfont
Let $\mathbb T^2 = \R^2/\Z^2$ denote the two-dimensional torus equipped with the flat metric, and let $\Delta$ be the associated Laplace-Beltrami operator. Let $\d>0$ and let $\spacecutoff_1\in C_c^\infty(\mathbb T^2)$ be supported in the vertical strip $\{(x^{(1)},x^{(2)})\in\mathbb T^2:\,\frac{1}{2}-\delta \le x^{(1)} \le \frac{1}{2} + \delta\}$ and equal to one on a smaller vertical strip. Define $\wt \spacecutoff_1$ in a similar way, but with $\wt \spacecutoff_1 \equiv 1$ on the support of $\spacecutoff_1$. Analogously, let $\spacecutoff_2\in C_c^\infty(\mathbb T^2)$ be supported in the horizontal strip $\{(x^{(1)},x^{(2)})\in\mathbb T^2:\,\frac{1}{2}-\delta \le x^{(2)} \le \frac{1}{2} + \delta\}$ and equal to one on a smaller horizontal strip, and define $\wt\spacecutoff_2$ similarly with $\wt\spacecutoff_2 \equiv 1$ on the support of $\spacecutoff_2$. 

Now, let $\ve>0$ and let $\freqcutoff_1\in C^\infty(\mathbb S^1)$ be supported in the set 
\[\Theta_{2\ve} = \lp-\frac{\pi}{4}-2\ve,\frac{\pi}{4}+2\ve\rp\cup\lp\frac{3\pi}{4}-2\ve,\frac{5\pi}{4}+2\ve\rp\]
and equal to one on the smaller set $\Theta_{\ve}$. Similarly, let $\freqcutoff_2\in C^\infty(\mathbb S^1)$ be nonzero on $\Theta_{2\ve}+\frac{\pi}{2}$ and equal to one on $\Theta_{\ve}+\frac{\pi}{2}$. Choose $\beta\in C_c^\infty(\R)$ to be supported in $[\frac{1}{4},\infty)$ and equal to one on $[\frac{1}{2},\infty)$. Then define symbols $b_j\in S_{cl}^0(T^*\mathbb T^2)$ by 
\[b_j(\xi) = \freqcutoff_j(\theta)\beta(r),\quad j = 1,2,\]
where $\xi = (r,\theta)$ in standard polar coordinates on $T_x^*\mathbb T^2.$ Figure \ref{torusimage} illustrates the cone of directions in $T_{x_0}^*\mathbb T^2$ in which $b_1$ is supported at some arbitrary $x_0\in\supp\chi_1.$ Now define $B_j = \wt\spacecutoff_j\textnormal{Op}(b_j)\spacecutoff_j$, and set the damping coefficient $W$ to be
\[W = B_1^*B_1 + B_2^*B_2.\]
To see $\ker W$ contains no nontrivial eigenfunctions of $\Delta$ we apply \propref{unique_cont_condition}. Note $\overline N\lp\supp \spacecutoff_1\rp$ contains all points of the form $(x,\xi)$ with $x\in\partial(\supp\spacecutoff_1)$ and $\xi = (r,\theta)$, where $\theta = 0$ or $\theta = \pi$. Since $b_1$ is constant in a conic neighborhood of both of these cotangent directions, the hypotheses of \propref{unique_cont_condition} are satisfied. Thus, $\ker B_1$ contains no eigenfunctions of the Laplacian. An analogous argument holds for $B_2$, and since $B_1^*B_1$ and $B_2^*B_2$ are nonnegative operators, $W$ cannot annihilate any eigenfunctions of the Laplacian.

\begin{figure}
\centering
\includegraphics[scale=0.45]{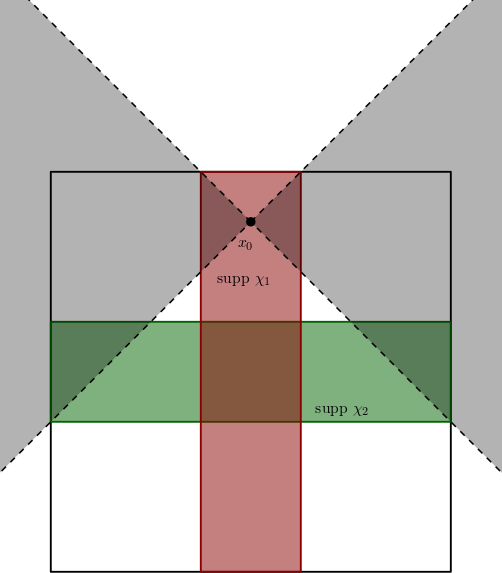}
\caption{The cone of directions containing the support of $b_1(x_0,\cdot)$ in $T_{x_0}^*\mathbb T^2$.}\label{torusimage}
\end{figure}

To show exponential energy decay with $W$ as the damping coefficient, we must also demonstrate that $W$ satisfies the AGCC. For this, it is convenient to observe that the AGCC is equivalent to the existence of some $T_0 > 0$ and $c > 0$ such that every trajectory $t\mapsto \varphi_t(x_0,\xi_0)$ encounters the set 
\[\mathscr W_c = \{(x,\xi)\in T^*\mathbb T^2:\, w(x,\xi) \ge c > 0\}\]
in time $T\le T_0.$ Recall that the geodesics on $\mathbb T^2$ are the projections of straight lines in $\R^2$ under the quotient map. Thus, the geodesic flow on $S^*\mathbb T^2$ is given by 
\[(x,\xi) \mapsto  ((x+t\xi)\textnormal{mod}\,\Z^2,\xi).\]
 Given an arbitrary point $(x_0,\xi_0)\in S^*\mathbb T^2$, we will show that $(\gamma(t),\gamma'(t)) = ((x_0+t\xi_0)\textnormal{mod}\,\Z^2,\xi_0)$ must intersect $\mathscr W_c$ in some fixed time $T_0>0$. Let us write $\xi_0\in \mathbb S^1$ as $(\cos\theta_0,\sin\theta_0)$, and consider the case where $\theta_0$ lies in $\Theta_{\ve}$. Suppose first that 
\[\theta_0 \in \lp-\frac{\pi}{4}-\ve,\frac{\pi}{4}+\ve\rp,\]
 which implies $b_1(\xi_0)\ne 0$. Then, if $x_0 = (x_0^{(1)},x_0^{(2)})$, the horizontal coordinate of $\gamma(t)$ is given by 
\[(x_0^{(1)} + t\cos\theta_0)\textnormal{mod}\,\Z,\]
 which must reach $\frac{1}{2}$ in some time less than $\frac{1}{\cos\theta_0} \le \frac{1}{\cos(\pi/4 + \ve)}$. Therefore, $(\gamma(t),\gamma'(t))$ intersects the region where $b_1$ is strictly positive in time less than $\frac{1}{\cos(\pi/4+\ve)}$. The same argument holds if instead ${\theta_0\in(\frac{3\pi}{4}-\ve,\frac{5\pi}{4} + \ve)}$, and so whenever $\theta_0\in \Theta_\ve$, we have that there exists a $c > 0$ such that $(\gamma(t),\gamma'(t))$ intersects $\{b_1(x,\xi) \ge \sqrt{c}\}$ in finite time. Analogously, if $\theta_0 \in\Theta_\ve + \frac{\pi}{2}$, then the vertical component of $\gamma(t)$, given by $(x_0^{(2)} + t\sin\theta_0)\!\!\mod\Z$, must equal $\frac{1}{2}$ in some time less than $\frac{1}{\sin(\pi/4 - \ve)}$. Therefore, $(\gamma(t),\gamma'(t))$ intersects $\{b_2(x,\xi)\ge \sqrt{c}\}$ in finite time. Since 
\[\mathbb T^2\times\lp\Theta_\ve\cup(\Theta_\ve + \frac{\pi}{2})\rp = S^*\mathbb T^2,\]
and since $w(x,\xi) = b_1^2(x,\xi) + b_2^2(x,\xi)$, we have that for every $(x_0,\xi_0)\in S^*\mathbb T^2$, the curve $\varphi_t(x_0,\xi_0)$ intersects $\mathscr W_{c}$ in some fixed time $T_0>0$. We have therefore shown that $W$ as defined here satisfies both Assumptions \ref{geometric_control} and \ref{unique_cont}. Thus by \thmref{exp_decay_thm}, all solutions to the damped wave equation on $\mathbb T^2$ with damping coefficient $W$ experience exponential energy decay. 

\end{example}

\begin{rmk}
\normalfont
In the previous example, one may notice that on the intersection of the vertical and horizontal strips, the principal symbol of the damping coefficient is supported in all directions $\xi\in T^*\mathbb T^2\setminus 0.$ So in this region, $W$ behaves very much like a multiplication operator for frequencies away from zero. A natural question is whether or not there must always be a point of ``full microsupport" if the hypotheses of \thmref{exp_decay_thm} are to be satisfied. In fact, there need not be such a point. To see this, we can modify our example above as follows.

Define $\spacecutoff_1,\,\wt\spacecutoff_1,\,\spacecutoff_2,\,\wt\spacecutoff_2$ and $b_1$ in a similar fashion to the previous example, but now define $b_2$ to be supported only in the directions with angle $\theta\in (\frac{\pi}{4}-2\ve,\frac{3\pi}{4} + 2\ve)$ and identically one on $(\frac{\pi}{4}-\ve,\frac{3\pi}{4} + \ve)$. Next, we introduce another horizontal strip, disjoint from the first, with a corresponding pair of cutoff functions $\spacecutoff_3,\wt\spacecutoff_3$. Then, define $\freqcutoff_3\in C^\infty(\mathbb S^1)$ to be supported in $(\frac{5\pi}{4} - 2\ve,\frac{7\pi}{4} + 2\ve)$ and equal to one on $(\frac{5\pi}{4} - \ve,\frac{7\pi}{4} + \ve)$, and let $b_3(\xi) = \freqcutoff_3(\theta)\beta(r)$, where $\xi = (r,\theta)$ as before. This is illustrated in Figure \ref{torusimage3strips}. Then, if we define $B_3 = \wt\spacecutoff_3 \Op(b_3)\spacecutoff_3$ and set $W = \sum_{j=1}^3 B_j^*B_j,$ we can apply arguments similar to those above to see that Assumptions \ref{geometric_control} and \ref{unique_cont} are still satisfied, but there does not exist any point $x\in\mathbb T^2$ where $w(x,\xi)$ is supported in all directions.
\end{rmk}
\begin{figure}[H]
\centering
\includegraphics[scale=0.45,trim = 0 0.2cm 0 0.2cm]{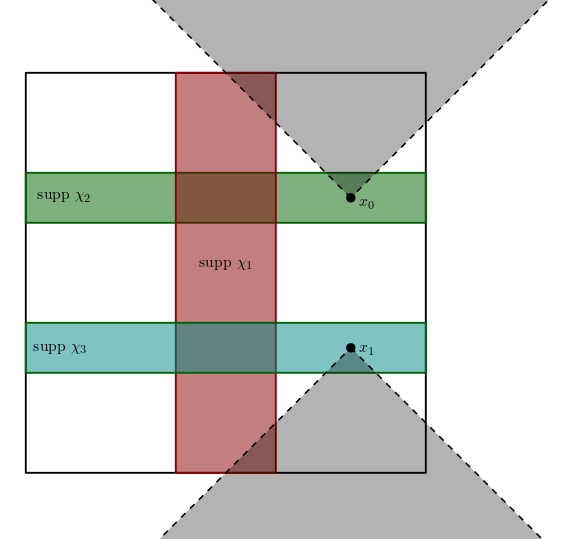}
\caption{The cones containing the supports of  $b_2(x_0,\cdot)$ and $b_3(x_1,\cdot)$.}\label{torusimage3strips}
\end{figure}

\bibliography{master_bib}{}

\begin{thebibliography}{LRLTT17}
\expandafter\ifx\csname url\endcsname\relax
  \def\url#1{\texttt{#1}}\fi
\expandafter\ifx\csname doi\endcsname\relax
  \def\doi#1{\burlalt{doi:#1}{http://dx.doi.org/#1}}\fi
\expandafter\ifx\csname urlprefix\endcsname\relax\def\urlprefix{URL }\fi
\expandafter\ifx\csname href\endcsname\relax
  \def\href#1#2{#2}\fi
\expandafter\ifx\csname burlalt\endcsname\relax
  \def\burlalt#1#2{\href{#2}{#1}}\fi

\bibitem[AL14]{AnantharamanLeautaud2014}
N.~Anantharaman and M.~L\'{e}autaud.
\newblock Sharp polynomial decay rates for the damped wave equation on the
  torus.
\newblock {\em Anal. PDE}, 7(1):159--214, 2014.
\newblock \doi{10.2140/apde.2014.7.159}.
\newblock With an appendix by St\'{e}phane Nonnenmacher.

\bibitem[BC15]{BurqChristianson2015}
N.~Burq and H.~Christianson.
\newblock Imperfect geometric control and overdamping for the damped wave
  equation.
\newblock {\em Communications in Mathematical Physics}, 336(1):101--130, 2015.

\bibitem[BH07]{BurqHitrik2007}
N.~Burq and M.~Hitrik.
\newblock Energy decay for damped wave equations on partially rectangular
  domains.
\newblock {\em Mathematical Research Letters}, 14(1):35--47, 2007.

\bibitem[Bur98]{Burq1998}
N.~Burq.
\newblock Contr\^{o}le de l'\'{e}quation des ondes dans des ouverts comportant
  des coins.
\newblock {\em Bull. Soc. Math. France}, 126(4):601--637, 1998.
\newblock \urlprefix\url{http://www.numdam.org/item?id=BSMF_1998__126_4_601_0}.
\newblock Appendix B written in collaboration with Jean-Marc Schlenker.

\bibitem[BZ16]{BurqZuily2016}
N.~Burq and C.~Zuily.
\newblock Concentration of laplace eigenfunctions and stabilization of weakly
  damped wave equation.
\newblock {\em Communications in Mathematical Physics}, 345(3):1055--1076,
  2016.

\bibitem[Chr07]{Christianson2007}
H.~Christianson.
\newblock Semiclassical non-concentration near hyperbolic orbits.
\newblock {\em Journal of Functional Analysis}, 246(2):145--195, 2007.

\bibitem[Chr10]{Christianson2010}
H.~Christianson.
\newblock Corrigendum to ``semiclassical non-concentration near hyperbolic
  orbits'' [j. funct. anal. 246(2) (2007) 145--195].
\newblock {\em Journal of Functional Analysis}, 258(3):1060--1065, 2010.

\bibitem[Cra08]{Craig2008}
I.~J.~D. Craig.
\newblock Anisotropic viscous dissipation in compressible magnetic x-points.
\newblock {\em Astrony \& Astrophysics}, 487(3):1155--1161, 2008.
\newblock \doi{10.1051/0004-6361:200809960}.

\bibitem[CSVW14]{csvw}
H.~Christianson, E.~Schenck, A.~Vasy, and J.~Wunsch.
\newblock From resolvent estimates to damped waves.
\newblock {\em J. Anal. Math.}, 121(1):143--162, 2014.

\bibitem[DJN19a]{DyatlovJinNonnenmacher}
S.~Dyatlov, L.~Jin, and S.~Nonnenmacher.
\newblock Control of eigenfunctions on surfaces of variable curvature.
\newblock {\em arXiv:1906.08923}, 2019.

\bibitem[DJN19b]{DyatlovJinNonnenmacher2019}
S.~Dyatlov, L.~Jin, and S.~Nonnenmacher.
\newblock Control of eigenfunctions on surfaces of variable curvature.
\newblock {\em arXiv preprint arXiv:1906.08923}, 2019.

\bibitem[DK20]{DatchevKleinhenz2020}
K.~Datchev and P.~Kleinhenz.
\newblock Sharp polynomial decay rates for the damped wave equation with
  h{\"o}lder-like damping.
\newblock {\em Proc. Amer. Math. Soc.}, 2020.
\newblock \doi{10.1090/proc/15018}.

\bibitem[G{\'e}r91]{Gerard1991}
P.~G{\'e}rard.
\newblock Microlocal defect measures.
\newblock {\em Communications in Partial Differential Equations},
  16(11):1761--1794, 1991,
  \burlalt{https://doi.org/10.1080/03605309108820822}{http://arxiv.org/abs/https://doi.org/10.1080/03605309108820822}.
\newblock \doi{10.1080/03605309108820822}.

\bibitem[GK69]{GKBook1969}
I.~C. Gohberg and M.~G. Kre\u{\i}n.
\newblock {\em Introduction to the theory of linear nonselfadjoint operators}.
\newblock Translations of Mathematical Monographs, Vol. 18. American
  Mathematical Society, Providence, R.I., 1969.
\newblock Translated from the Russian by A. Feinstein.

\bibitem[H{\"o}r83]{HormanderBook1983}
L.~H{\"o}rmander.
\newblock {\em The Analysis of Linear Partial Differential Operators I}.
\newblock Berlin: spring-verlag, 1983.
\newblock \doi{10.1007/978-3-642-61497-2}.

\bibitem[Jin20]{Jin2020}
L.~Jin.
\newblock Damped wave equations on compact hyperbolic surfaces.
\newblock {\em Communications in Mathematical Physics}, 373(3):771--794, 2020.

\bibitem[JSC11]{Joubert2011}
S.~V. Joubert, M.~Y. Shatalov, and C.~E. Coetzee.
\newblock Analysing manufacturing imperfections in a spherical vibratory
  gyroscope.
\newblock In {\em 2011 4th IEEE International Workshop on Advances in Sensors
  and Interfaces (IWASI)}, pages 165--170. IEEE, 2011.

\bibitem[KKBH16]{Krattiger2016}
D.~Krattiger, R.~Khajehtourian, C.~L. Bacquet, and M.~I. Hussein.
\newblock Anisotropic dissipation in lattice metamaterials.
\newblock {\em AIP Advances}, 6(12):121802, 2016.

\bibitem[Kle17]{Klein2017}
G.~Klein.
\newblock Best exponential decay rate of energy for the vectorial damped wave
  equation.
\newblock {\em SIAM Journal on Control and Optimization}, 56, 07 2017.
\newblock \doi{10.1137/17M1142636}.

\bibitem[Kle19a]{Kleinhenz2019}
P.~Kleinhenz.
\newblock {Stabilization Rates for the Damped Wave Equation with
  H\"older-Regular Damping}.
\newblock {\em Commun. Math. Phys.}, 369(3):1187--1205, 2019.

\bibitem[Kle19b]{Kleinhenz2019a}
P.~Kleinhenz.
\newblock Decay rates for the damped wave equation with finite regularity
  damping.
\newblock {\em arXiv preprint arXiv:1910.06372}, 2019.

\bibitem[Leb96]{Lebeau1996}
G.~Lebeau.
\newblock Equation des ondes amorties.
\newblock In {\em Algebraic and Geometric Methods in Mathematical Physics:
  Proceedings of the Kaciveli Summer School, Crimea, Ukraine, 1993}, pages
  73--109. Springer Netherlands, Dordrecht, 1996.

\bibitem[LL17]{LeautaudLerner2017}
M.~L{\'e}autaud and N.~Lerner.
\newblock Energy decay for a locally undamped wave equation.
\newblock {\em Annales de la facult\'e des sciences de Toulouse S\'er.6},
  26(1):157--205, 2017.

\bibitem[LR05]{LiuRao2005}
Z.~Liu and B.~Rao.
\newblock Characterization of polynomial decay rate for the solution of linear
  evolution equation.
\newblock {\em Zeitschrift f{\"u}r angewandte Mathematik und Physik ZAMP},
  56(4):630--644, 2005.

\bibitem[LRLTT17]{RousseaLebeauPeppinoTrelat}
J.~Le~Rousseau, G.~Lebeau, P.~Terpolilli, and E.~Tr{\'e}lat.
\newblock Geometric control condition for the wave equation with a
  time-dependent observation domain.
\newblock {\em Analysis \& PDE}, 10(4):983--1015, 2017.

\bibitem[LRZ02]{LiuRaoZhang2002}
K.~Liu, B.~Rao, and X.~Zhang.
\newblock Stabilization of the wave equations with potential and indefinite
  damping.
\newblock {\em Journal of mathematical analysis and applications},
  269(2):747--769, 2002.

\bibitem[Ral69]{Ralston1969}
J.~Ralston.
\newblock Solutions of the wave equation with localized energy.
\newblock {\em Communications on Pure and Applied Mathematics}, 22(6):807--823,
  1969.

\bibitem[Ral82]{Ralston1982}
J.~Ralston.
\newblock Gaussian beams and the propagation of singularities.
\newblock {\em Studies in Partial Differential Equations, MAA Studies in
  Mathematics}, 23:206--248, 1982.

\bibitem[RT75]{RauchTaylor1975}
J.~Rauch and M.~Taylor.
\newblock Exponential decay of solutions to hyperbolic equations in bounded
  domains.
\newblock {\em Indiana Univ. Math. J.}, 24(1):79--86, 1975.

\bibitem[Sj{\"o}00]{Sjostrand2000}
J.~Sj{\"o}strand.
\newblock Asymptotic distribution of eigenfrequencies for damped wave
  equations.
\newblock {\em Publications of the Research Institute for Mathematical
  Sciences}, 36(5):573--611, 2000.

\bibitem[Sta17]{Stahn2017}
R.~Stahn.
\newblock Optimal decay rate for the wave equation on a square with constant
  damping on a strip.
\newblock {\em Zeitschrift f{\"u}r angewandte Mathematik und Physik}, 68(2):36,
  2017.

\bibitem[Str72]{Strichartz1972}
R.~Strichartz.
\newblock A functional calculus for elliptic pseudo-differential operators.
\newblock {\em American Journal of Mathematics}, 94(3):711--722, 1972.

\bibitem[Sun22]{Sun2022}
C.~Sun.
\newblock Sharp decay rate for the damped wave equation with convex-shaped
  damping.
\newblock {\em International Mathematics Research Notices}, 2022.

\end{thebibliography}
\bibliographystyle{halpha-abbrv}

\end{document}